\newcommand{\N}{\mathbb{N}}
\newcommand{\ideal}{\mathcal{I}}
\newcommand{\idealj}{\mathcal{J}}
\newcommand\fin{{\sf Fin}}
\newcommand{\Q}{\mathbb{Q}}
\newtheorem{theorem}{Theorem}[section]
\newtheorem{lemma}[theorem]{Lemma}
\newtheorem{proposition}[theorem]{Proposition}
\newtheorem{corollary}[theorem]{Corollary}
\newtheorem{claim}[theorem]{Claim}
\theoremstyle{definition}
\newtheorem{remark}[theorem]{Remark}
\newtheorem{definition}[theorem]{Definition}
\newtheorem{question}[theorem]{Question}
\title{Banach spaces of $\mathcal I$-convergent sequences}
\author{Michael A. Rincón-Villamizar, Carlos  Uzcátegui Aylwin}
\begin{document}
\maketitle

\begin{abstract}
We study the space $c_{0,\mathcal{I}}$ of all bounded sequences $(x_n)$ that $\ideal$-converge to $0$, endowed with the sup norm, where $\ideal$ is an ideal of subsets of $\N$. We show that two such spaces, $c_{0,\ideal}$ and $c_{0,\idealj}$, are isometric exactly when the ideals $\ideal$ and $\idealj$ are isomorphic. Additionally, we analyze the connection of the well-known Kat\v{e}tov pre-order $\leq_K$ on ideals with some properties of the space $c_{0,\ideal}$. For instance, we show that $\mathcal{I}\leq_K\mathcal{J}$ exactly when there is a (not necessarily onto) Banach lattice isometry from $c_{0,\mathcal{I}}$ to $c_{0,\mathcal{J}}$, satisfying some additional conditions.

We present some lattice-theoretic properties of $c_{0,\ideal}$, particularly demonstrating that every closed ideal of $\ell_\infty$ is equal to $c_{0,\ideal}$ for some ideal $\ideal$ on $\N$. We also show that certain classical Banach spaces are isometric to $c_{0,\ideal}$ for some ideal $\ideal$, such as the spaces $\ell_\infty(c_0)$ and $c_0(\ell_\infty)$. Finally, we provide several examples of ideals for which $c_{0,\mathcal{I}}$ is not a Grothendieck space.
\end{abstract}

\section{Introduction}
An ideal on $\mathbb{N}$ is a collection $\ideal$  of subsets of $\mathbb{N}$ closed under finite unions and taking subsets of its elements. A sequence $(x_n)$ in $\mathbb{R}$ is said to be $\mathcal{I}$-convergent to $x\in\mathbb{R}$, denoted as $\mathcal{I}$-$\lim x_n=x$, if for each $\varepsilon>0$, the set $\{n\in\mathbb{N}\,\colon\,|x_n-x|\geq\varepsilon\}$ belongs to $\mathcal{I}$. When $\mathcal{I}$ is $\fin$, the ideal of finite subsets of $\mathbb{N}$, we have the classical convergence in $\mathbb{R}$. The $\mathcal{I}$-convergence was introduced in \cite{k-s-w}, although many authors had already studied this concept in particular cases and in different contexts (see, for instance, \cite{balcerzak,fast,filipow,k-m-s}). The main goal of this paper is to study the following space:
\begin{gather*}
c_{0,\mathcal{I}}=\{(x_n)\in\ell_\infty\,\colon\,\mathcal{I}-\lim x_n=0\}.
\end{gather*}
This space has recently received some attention (see, for instance, \cite{kania, Leonetti2018}). It is known that $c_{0,\mathcal{I}}$ is a closed subspace (also, an ideal) of $\ell_\infty$ and it is isometric to $C_0(U_\mathcal{I})$ for some open set $U_\mathcal{I}$ of $\beta\mathbb{N}$ (see \cite{kania} or Proposition \ref{identification c_0,I and C_0(U_I)}). Two extreme examples are worth keeping in mind. On the one hand,  as we mentioned before, $c_{0,\fin}$ is exactly $c_0$. On the other hand, if $\mathcal{I}$ is the trivial ideal $\mathcal{P}(\mathbb{N})$, we obtain the whole space $\ell_\infty$. 

A natural question is to determine when two such spaces are isomorphic (isometric). As a consequence of the Banach-Stone theorem, $c_{0,\mathcal{I}}$ and $c_{0,\mathcal{J}}$ are isometric exactly when $\mathcal{I}$ and $\mathcal{J}$ are isomorphic as ideals. 
However, the same does not hold for isomorphism. Indeed, it is known that if $\mathcal{I}$ is a maximal ideal, then $c_{0, \mathcal{I}}$ is complemented in $\ell_\infty$ and thus isomorphic to $\ell_\infty$ (see \cite{Leonetti2018}); but  $\ell_\infty$ is equal to $c_{0,\mathcal{P}(\mathbb{N})}$ and a maximal ideal is not isomorphic to $\mathcal{P}(\mathbb{N})$.


Ideals on countable sets have been studied for a long time, and several ways of comparing them have been investigated (see the surveys \cite{Hrusak2011,uzcasurvey}). We are particularly interested in the Kat\v{e}tov pre-order (see, for instance, \cite{Hrusak2017}). Given two ideals $\mathcal{I}$ and $\mathcal{J}$ on two countable sets $X$ and $Y$, respectively, we say that $\mathcal{I}$ is Kat\v{e}tov below $\idealj$, denoted as $\mathcal{I}\leq_K\mathcal{J}$, if there is $f:Y\to X$ such that $f^{-1}(A)\in \mathcal{J}$ for all $A\in \mathcal{I}$. Following the ideas behind a proof of Holsztynki's theorem \cite{rincon-villamizar}, we show that $\mathcal{I}\leq_K\mathcal{J}$ exactly when there is a (non-necessarily onto) Banach lattice isometry from $c_{0,\mathcal{I}}$ to $c_{0,\mathcal{J}}$ satisfying some additional conditions (see Theorem \ref{HT}).

 We study some lattice-theoretic properties of the spaces $c_{0,\mathcal{I}}$ and, in particular, we show that any closed ideal of $\ell_\infty$ is equal to $c_{0,\mathcal{I}}$ for some ideal $\mathcal{I}$ on $\mathbb{N}$. We found an interesting connection between the ideal theoretic notion of orthogonal ideals \cite{GU2018, To} and the notion of $c_0$-disjoint subspaces of $\ell_\infty$ (see section \ref{latticesProp}).

We show how to represent some classical Banach spaces as a space of the form $c_{0,\mathcal{I}}$ for some ideal $\mathcal{I}$ on $\mathbb{N}$. For example, we found ideals providing an  isometric representation of  the spaces $\ell_\infty(c_0)$ and $c_0(\ell_\infty)$. These two spaces have been recently shown to be non-isomorphic (see \cite{cembranos-mendoza2010}). We present a  different argument  to show that they are not isometric. We also found an uncountable collection of pairwise non-isometric spaces of the form $c_{0,\mathcal{I}}$. In \cite[Problem 9]{gonzalez-kania}, it was asked about ideals $\mathcal{I}$ such that $c_{0,\mathcal{I}}$ is Grothendieck; we present several non-Grothendieck spaces.

\section{Preliminaries}

\subsection{Banach spaces and Banach lattices.} 

We will use standard terminology and notation for Banach lattices and Banach space theory. For unexplained definitions and notations, we refer to \cite{AK,luxe}. All Banach lattices analyzed here are assumed to be real. $B_X$ stands for the closed unit ball of $X$. The positive cone of a Banach lattice $X$ is denoted by $X^+$. A sublattice $Y$ of a Banach lattice $X$ is an \textit{ideal} if $x\in Y$ whenever $|x|\leq|y|$ for some $y\in Y$. The closed ideal generated by a subset $A$  of $X$ is denoted by $\langle A\rangle$. If $X$ and $Y$ are isomorphic Banach spaces, we write $X\sim Y$.


If $X$ and $Y$ are Banach lattices, a linear operator $T\colon X\to Y$ is called a \textit{Banach lattice isomorphism} if $T$ is a Banach isomorphism such that $T(x\wedge y)=Tx\wedge Ty$ for all $x,y\in X$. Furthermore, if $T$ is an  isometry, we say that $T$ is a \textit{Banach lattice isometry}. Finally, when $T$ is not necessarily onto we will called it an {\em into} Banach lattice isomorphism (isometry, respectively).

Now, we introduce some notation. Let $(X_j)_{j\in\mathbb N}$ be a family of Banach spaces.
\begin{enumerate}
    \item $\ell_\infty((X_j)_{j\in\mathbb N})$ denotes the $\ell_\infty$-sum of $(X_j)_{j\in\mathbb N}$, that is, the Banach space of all bounded sequences $\displaystyle(x_j)\in\prod_jX_j$  endowed with the norm $\|\cdot\|$
given by $\displaystyle\|(x_j)\|=\sup_j\|x_j\|$. 

\item $c_0((X_j)_{j\in\mathbb N})$ is the $c_0$-sum of $(X_j)_{j\in\mathbb N}$, that is, the closed subspace of $\ell_\infty((X_j)_{j\in\mathbb N})$ consisting of all null sequences, i.e, $\displaystyle\lim_j\|x_j\|=0$.
\end{enumerate}
When $X=X_j$ for all $j\in\mathbb N$, these spaces are denoted by  
$\ell_\infty(X)$ and $c_0(X)$, respectively. Finally, for $X=\mathbb R$, $\ell_\infty(X)$ and $c_0(X)$ correspond to $\ell_\infty$ and $c_0$, respectively.

\begin{remark}\label{isomorphisms of l_oo(c_0)}
    It is easy to see that if each $X_j$ is isometric (isomorphic) to $X$, then
$\ell_\infty((X_j)_{j\in\mathbb N})$ ($c_0((X_j)_{j\in\mathbb N})$) is isometric (isomorphic, respectively) to $\ell_\infty(X)$ ($c_0(X)$, respectively).
\end{remark}

\subsection{Ideals.} Recall that an ideal $\mathcal I$ on 
 a set $X$ is a collection of subsets of $X$ satisfying:
\begin{enumerate}
    \item $\emptyset\in\mathcal I$;
    \item If $A\subset B$ and $B\in\mathcal I$, then $A\in\mathcal I$;
    \item If $A,B\in\mathcal I$, then $A\cup B\in\mathcal I$.
\end{enumerate}
We always assume that every finite subset of $X$ belongs to  $\mathcal I$. The dual filter of an ideal $\mathcal I$ is denoted by $\mathcal I^*$ and consists of all sets of the form $X\setminus A$ for some $A\in \ideal$. If $\mathcal A$ is a family of subsets of $X$, $\mathcal I(\mathcal A)$ denotes the ideal generated by $\mathcal A$ which consists of all subsets of finite unions of sets from $\mathcal{A}$. If $A\subset X$ and  $\mathcal I$ is an ideal on $X$, we denote the restriction of $\ideal$ to $A$ by $\mathcal I\restriction A=\{A\cap B\,\colon B\in\mathcal I\}$ which is an ideal on $A$. If $\mathcal I$ and $\mathcal J$ are ideals on $X$, the ideal $\mathcal I\sqcup\mathcal J$ is defined as $\mathcal I\sqcup\mathcal J:=\{A\cup B\,\colon\,A\in\mathcal I,B\in\mathcal J\}.$

Two ideals $\mathcal I$ and $\mathcal J$ on $\mathbb N$ are \textit{isomorphic} if there is a bijection (called isomorphism) $f\colon\mathbb N\to\mathbb N$ such that $A\in\mathcal J$ iff $f^{-1}(A)\in\mathcal I$. We recall the {\em Kat\v{e}tov pre-order} on ideals. If $\mathcal I$ and $\mathcal J$ are ideals on $X$ and $Y$, respectively, we write $\mathcal I\leq_K\mathcal J$ if there is a function $f\colon Y\to X$ (called a {\em Kat\v{e}tov reduction}) such that $f^{-1}(A)\in\mathcal J$ for all $A\in\mathcal I$.  In most of the cases, the function $f$ can be assumed  to be onto.

\begin{proposition}\cite{BFMS2013}
\label{katetoveOnto}
Let $\mathcal I$ and $\mathcal J$ be ideals on $X$ and $Y$, respectively. Suppose $\idealj$ contains an infinite set and  $\ideal\leq_K\idealj$, then there is an onto map  $f\colon Y\to X$ such that $f^{-1}(A)\in\mathcal J$ for all $A\in\mathcal I$. 
\end{proposition}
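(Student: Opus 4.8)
The plan is to start from any Kat\v{e}tov reduction $g\colon Y\to X$ witnessing $\ideal\leq_K\idealj$ and to alter it on a single member of $\idealj$ so as to make it onto, without spoiling the reduction property. If $g$ is already onto there is nothing to prove, so I would assume $Z:=X\setminus g(Y)\neq\emptyset$ and fix, using the hypothesis, an infinite set $B\in\idealj$. The naive idea is to leave $g$ untouched off $B$ and to redefine it on $B$ so that the image picks up the missing set $Z$.

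The one subtlety --- and the only genuinely non-formal point in the argument --- is that redefining $g$ on $B$ can also \emph{lose} points of $g(Y)$: precisely those $x\in g(Y)$ whose entire $g$-fiber $g^{-1}(\{x\})$ is contained in $B$. I would therefore set $W:=\{x\in g(Y)\colon g^{-1}(\{x\})\subseteq B\}$. Since $X$ is countable (which is the case in our setting, all ideals living on $\N$), the set $Z\cup W$ is countable, and it is nonempty as $Z\neq\emptyset$; because $B$ is infinite there is a surjection $h\colon B\to Z\cup W$. Then I would define $f\colon Y\to X$ by $f\restriction B = h$ and $f\restriction (Y\setminus B)=g\restriction (Y\setminus B)$.

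Two verifications remain. For surjectivity: $f(B)=Z\cup W$ by the choice of $h$, while $f(Y\setminus B)=g(Y\setminus B)=g(Y)\setminus W$, the last equality holding because a point of $g(Y)$ fails to lie in $g(Y\setminus B)$ exactly when all of its $g$-preimages lie in $B$; hence $f(Y)=(Z\cup W)\cup(g(Y)\setminus W)=Z\cup g(Y)=X$. For the reduction property: if $A\in\ideal$ then, since $f$ and $g$ agree off $B$, one has $f^{-1}(A)\subseteq g^{-1}(A)\cup B$, and both $g^{-1}(A)$ and $B$ belong to $\idealj$, so $f^{-1}(A)\in\idealj$ because $\idealj$ is closed under unions and subsets. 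The main (mild) obstacle is thus just to notice the auxiliary set $W$ and repair the fibers that $B$ swallows; once that is done, the rest is bookkeeping, and the countability of $X$ is used only to guarantee that the countable set $Z\cup W$ can be covered by the single set $B\in\idealj$.
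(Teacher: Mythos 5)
Your argument is correct: the only delicate point is exactly the one you isolate, namely that redefining $g$ on $B$ may lose the points of $g(Y)$ whose whole fiber lies in $B$, and your set $W$ repairs this, while $f^{-1}(A)\subseteq g^{-1}(A)\cup B$ gives the reduction property. The paper itself states this proposition with only a citation to \cite{BFMS2013} and gives no proof, but your construction is the standard one and is complete.
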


In general, Kat\v{e}tov reductions are not bijective but this naturally suggests a variant of $\leq_K$ which  was studied in \cite{BFMS2013}. They defined $\ideal\sqsubseteq \idealj$ if there is a bijective Kat\v{e}tov reduction from $\ideal$ to $\idealj$. In other words, $\ideal\sqsubseteq \idealj$ if there is an ideal $\idealj'\subseteq \idealj$ such that $\ideal$ is isomorphic to $\idealj'$. We will use this variant in the sequel.

The \textit{Fubini product}  is defined as follows.  For two ideals $\ideal$ and $\mathcal J$ on $\N$,  $\ideal\times \mathcal J$ is the ideal on $\N\times\N $ given  by:
\begin{gather*}
    A\in\mathcal I\times\mathcal J\Longleftrightarrow 
    \{n\in\mathbb N\,\colon\,\{m\in\mathbb N\,\colon\,(n,m)\in A\}\not\in\mathcal J\}\in\mathcal I.
\end{gather*}

Another concept that will be used in the sequel is the following. Given a collection $\mathcal{A}$ of subsets of $\N$, the {\em orthogonal} of $\mathcal{A}$ (see \cite{GU2018,To}) is the following family of sets
\[
{\mathcal{A}}^{\perp}=\{B\subseteq \N : (\forall A\in {\mathcal{A}})(A\cap B
\textrm{ is finite}) \}.
\]
An ideal $\ideal$ is called {\em Fr\'echet} if $\ideal=\ideal^{\perp\perp}$. We include  some examples  in section \ref{examples}.

Let $\{K_n:\; n\in F\}$ be a partition of a countable set $X$, where $F\subseteq \N$.
For $n\in F$, let $\ideal_n$ be an ideal on $K_n$. The direct sum, denoted by $\bigoplus\limits_{n\in F} \ideal_n$, is defined  by
\[
A\in \bigoplus_{n\in F} \ideal_n \Leftrightarrow (\forall n\in F)(A\cap K_n \in \ideal_n).
\]
In general, given a sequence of ideals $\ideal_n$ over a countable set $X_n$, we define $\bigoplus_n \ideal_n$ by taking a partition $\{K_n:\;
n\in \N\}$ of $\N$ and an isomorphic  copy $\ideal_n'$ of $\ideal_n$ on
$K_n$ and let $\bigoplus_n \ideal_n$ be $\bigoplus_n \ideal_n'$. It should be
clear that  $\bigoplus_n \ideal_n$ is,  up to isomorphism, independent of
the partition and the copies used. If all $\ideal_n$ are equal (isomorphic) to $\ideal$ we
will write $\ideal^\omega$ instead of $\bigoplus_n \ideal_n$.

\subsection{The space  $c_{0,\ideal}(X)$}

We begin by introducing the notion of $\mathcal I$-convergence for an ideal $\mathcal I$ on $\mathbb N$. This concept is due to Kostyrko, \v{S}al\'at and Wilczy\'nski \cite{k-s-w}.

\begin{definition}
Let $\mathcal I$ be an ideal on $\mathbb N$ and $X$ be a Banach space. A sequence $(x_n)$ in $X$ is $\mathcal I$-convergent to $x\in X$, and we write $\mathcal I-\lim x_n=x$, if for each $\varepsilon>0$, $\{n\in \N\,\colon\,\|x_n-x\|\geq \varepsilon\}\in \mathcal I$.
\end{definition}

The main objetive of this paper is to study the following subspace of $\ell_\infty$.
\begin{gather*}
c_{0,\mathcal I}(X)=\{(x_n)\in\ell_\infty(X)\,\colon\,\mathcal I-\lim x_n={\bf0}\}.
\end{gather*}

The upcomming result gives basic properties of $\mathcal I$-convergence
(see also \cite[Theorem 2.3]{k-m-s}). If ${\bf x}\in\ell_\infty(X)$ and $\varepsilon>0$, we will use through out the whole paper the following notation:
\begin{gather*}
    A(\varepsilon,{\bf x})=\{n\in\mathbb N\,\colon\,\|x_n\|\geq\varepsilon\}.
\end{gather*}

\begin{proposition}
\label{properties of A-convergence}
Let $X$ be a Banach space and $\mathcal I$ be an ideal on $\mathbb N$.
\begin{enumerate}
\item If $c\in\mathbb R$ and $\mathcal I-\lim x_n=x$, then $\mathcal I-\lim c x_n=cx$.

\item If $\mathcal I-\lim x_n=x$ and $\mathcal I-\lim y_n=y$, then $\mathcal I-\lim x_n+y_n=x+y$.

\item If $\mathcal I-\lim x_n={\bf0}$ and $\|y_n\|\leq\|x_n\|$ for all $n\in\mathbb N$, then $\mathcal I-\lim y_n={\bf0}$.
    
\item $c_{0,\mathcal I}(X)$ is closed subspace of $\ell_\infty(X)$.

\item If $X$ is a Banach lattice, then $c_{0,\mathcal I}(X)$ is a closed ideal of $\ell_\infty(X)$.
\end{enumerate} 
\end{proposition}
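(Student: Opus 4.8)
The plan is to verify the five items in sequence, since each is a routine consequence of the definition of $\mathcal{I}$-convergence together with the ideal axioms. For item (1), if $c=0$ the conclusion is trivial; if $c\neq 0$, then for any $\varepsilon>0$ the set $\{n : \|cx_n - cx\| \geq \varepsilon\}$ equals $\{n : \|x_n - x\| \geq \varepsilon/|c|\}$, which lies in $\mathcal{I}$ by hypothesis. For item (2), the triangle inequality gives the inclusion
\[
\{n : \|(x_n+y_n)-(x+y)\| \geq \varepsilon\} \subseteq \{n : \|x_n - x\| \geq \varepsilon/2\} \cup \{n : \|y_n - y\| \geq \varepsilon/2\},
\]
and the right-hand side is a union of two members of $\mathcal{I}$, hence in $\mathcal{I}$ by closure under finite unions; then downward closure finishes it. Item (3) is immediate: $\{n : \|y_n\| \geq \varepsilon\} \subseteq \{n : \|x_n\| \geq \varepsilon\} = A(\varepsilon,\mathbf{x})$, and the latter is in $\mathcal{I}$, so by downward closure so is the former.

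For item (4), that $c_{0,\mathcal{I}}(X)$ is a linear subspace follows at once from (1) and (2) (and it is nonempty since the zero sequence $\mathcal{I}$-converges to $\mathbf{0}$). The substantive point is closedness. I would take a sequence $\mathbf{x}^{(k)} \to \mathbf{x}$ in $\ell_\infty(X)$ with each $\mathbf{x}^{(k)} \in c_{0,\mathcal{I}}(X)$, fix $\varepsilon>0$, choose $k$ with $\|\mathbf{x}^{(k)} - \mathbf{x}\|_\infty < \varepsilon/2$, and observe that $\|x_n\| \leq \|x_n - x_n^{(k)}\| + \|x_n^{(k)}\| < \varepsilon/2 + \|x_n^{(k)}\|$ for every $n$. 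Hence $A(\varepsilon, \mathbf{x}) \subseteq A(\varepsilon/2, \mathbf{x}^{(k)})$, and the right-hand side belongs to $\mathcal{I}$ because $\mathbf{x}^{(k)} \in c_{0,\mathcal{I}}(X)$; downward closure of $\mathcal{I}$ gives $A(\varepsilon,\mathbf{x}) \in \mathcal{I}$. Since $\varepsilon$ was arbitrary, $\mathbf{x} \in c_{0,\mathcal{I}}(X)$.

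For item (5), assuming $X$ is a Banach lattice, it is standard that $\ell_\infty(X)$ is a Banach lattice under the coordinatewise order, and $c_{0,\mathcal{I}}(X)$ is a linear subspace by (4). To see it is a sublattice and in fact an ideal, suppose $\mathbf{y} \in \ell_\infty(X)$, $\mathbf{x} \in c_{0,\mathcal{I}}(X)$, and $|y_n| \leq |x_n|$ for all $n$; then $\|y_n\| \leq \|x_n\|$ for all $n$ (the lattice norm is monotone on the positive cone), so $\mathcal{I}\text{-}\lim y_n = \mathbf{0}$ by item (3), i.e.\ $\mathbf{y} \in c_{0,\mathcal{I}}(X)$. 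Closedness was already established in (4). I do not anticipate any genuine obstacle here; the only mild care needed is to keep track of which ideal axiom (finite unions vs.\ downward closure) is being invoked at each step, and — in (5) — to recall that $|\,\mathbf{y}\,| \leq |\,\mathbf{x}\,|$ in $\ell_\infty(X)$ means exactly $|y_n| \leq |x_n|$ in $X$ for every $n$, which combined with monotonicity of the norm reduces the lattice-ideal claim to item (3).
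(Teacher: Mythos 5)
Your proof is correct and follows essentially the same route as the paper's: items (1)--(3) by direct set inclusions from the definition, item (4) via the inclusion $A(\varepsilon,\mathbf{x})\subseteq A(\varepsilon/2,\mathbf{x}^{(k)})$ for an approximant within $\varepsilon/2$, and item (5) by reducing the solidity condition to item (3) through monotonicity of the lattice norm. The only difference is that you spell out the details the paper declares immediate.
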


\begin{proof}
(1), (2) and (3) follow immediately from definition. (4):  If ${\bf y}\in\overline{c_{0,\mathcal I}(X)}^{\|\cdot\|_\infty}$ and $\varepsilon>0$ is given, there is ${\bf x}\in c_{0,\mathcal I}(X)$ such that $\|{\bf y-x}\|<\varepsilon/2$. Since $A(\varepsilon,{\bf y})\subset A(\varepsilon/2,{\bf x})$, we conclude that ${\bf y}\in c_{0,\mathcal I}(X)$. Finally for (5), if ${\bf y}\in c_{0,\mathcal I}(X)$ and $|{\bf x}|\leq|{\bf y}|$, then $\|x_n\|\leq\|y_n\|$ for each $n\in\mathbb N$. By (3), ${\bf x}\in c_{0,\mathcal I}(X)$.
\end{proof}




    When $X=\mathbb R$ we write $c_{0,\mathcal I}$ instead of $c_{0,\mathcal I}(X)$. The next result provides another description of $c_{0,\mathcal I}$. Recall that if ${\bf y}=(y_n)$ is a sequence, $\mathrm{supp}({\bf y})$ denotes the support of ${\bf y}$, that is, the set $\{n\in\mathbb N\,\colon\,y_n\neq0\}$.

\begin{proposition}
\label{span of I}
Let $X$ be a Banach space and  $\ideal$ a proper ideal on $\N$. 
\begin{enumerate}
\item $c_{0,\mathcal I}(X)=\overline{\{{\bf y}\in\ell_\infty(X)\,\colon\,\mathrm{supp}({\bf y})\in\mathcal I\}}^{\|\cdot\|_\infty}$. 

\item  $ c_{0,\ideal}=\overline{\mathrm{span}\{\chi_A\,\colon\,A\in\mathcal I\}}^{\|\cdot\|_\infty}$. 
 
\item  $A\in \ideal$ iff $\chi_A\in c_{0,\ideal}$. 
\end{enumerate}
\end{proposition}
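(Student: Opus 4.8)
The plan is to establish part (1) first and then read off (2) and (3) from it. For (1), I would write $Z=\{{\bf y}\in\ell_\infty(X):\mathrm{supp}({\bf y})\in\ideal\}$ and check the two inclusions. The inclusion $\overline{Z}^{\|\cdot\|_\infty}\subseteq c_{0,\ideal}(X)$ is immediate: if $\mathrm{supp}({\bf y})\in\ideal$, then $A(\varepsilon,{\bf y})\subseteq\mathrm{supp}({\bf y})$ belongs to $\ideal$ for every $\varepsilon>0$ since ideals are downward closed, so ${\bf y}\in c_{0,\ideal}(X)$; and $c_{0,\ideal}(X)$ is closed by Proposition \ref{properties of A-convergence}(4). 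For the reverse inclusion, given ${\bf x}\in c_{0,\ideal}(X)$ and $\varepsilon>0$, I would define ${\bf y}$ to agree with ${\bf x}$ on $A(\varepsilon,{\bf x})$ and to equal ${\bf 0}$ elsewhere; then $\mathrm{supp}({\bf y})\subseteq A(\varepsilon,{\bf x})\in\ideal$, so ${\bf y}\in Z$, while $\|{\bf x}-{\bf y}\|_\infty\le\varepsilon$ because $\|x_n-y_n\|$ is either $0$ or $\|x_n\|<\varepsilon$. Hence ${\bf x}\in\overline{Z}^{\|\cdot\|_\infty}$.

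For (2), one inclusion follows from (1): each $\chi_A$ with $A\in\ideal$ has support $A\in\ideal$, so $\chi_A\in c_{0,\ideal}$, and since $c_{0,\ideal}$ is a closed subspace it contains $\overline{\mathrm{span}\{\chi_A:A\in\ideal\}}^{\|\cdot\|_\infty}$. For the other inclusion, by (1) it suffices to approximate in sup norm any ${\bf y}\in\ell_\infty$ with $S:=\mathrm{supp}({\bf y})\in\ideal$ by an element of $\mathrm{span}\{\chi_A:A\in\ideal\}$. Fixing $\varepsilon>0$ and $M=\|{\bf y}\|_\infty$, I would partition $[-M,M]$ into finitely many half-open intervals $I_1,\dots,I_k$ each of length $<\varepsilon$, pick $c_j\in I_j$, and set $A_j=\{n\in S:y_n\in I_j\}$. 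Since $A_j\subseteq S$ and $\ideal$ is downward closed, $A_j\in\ideal$; the element ${\bf z}=\sum_{j=1}^{k} c_j\chi_{A_j}$ then lies in $\mathrm{span}\{\chi_A:A\in\ideal\}$, vanishes off $S$ exactly like ${\bf y}$, and satisfies $|y_n-z_n|<\varepsilon$ for $n\in S$, so $\|{\bf y}-{\bf z}\|_\infty\le\varepsilon$.

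Finally, (3) is essentially a special case. If $A\in\ideal$, then $\mathrm{supp}(\chi_A)=A\in\ideal$, hence $\chi_A\in c_{0,\ideal}$ by (1) (equivalently by (2)); conversely, if $\chi_A\in c_{0,\ideal}$, then taking $\varepsilon=1/2$ gives $A=A(1/2,\chi_A)\in\ideal$.

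I do not expect a genuine obstacle; the argument is elementary throughout. The only point that requires a little care is in (2): the level sets of the approximating simple function must be intersected with the support $S$ so that they remain members of $\ideal$. If one instead used the full level sets in $\N$, the set corresponding to the interval containing $0$ would typically be cofinite and hence fail to lie in $\ideal$.
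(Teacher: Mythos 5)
Your proof is correct and follows essentially the same strategy as the paper: both directions rest on the downward closure of $\ideal$ (so that $A(\varepsilon,{\bf y})\subseteq\mathrm{supp}({\bf y})$ and the level sets $A_j\subseteq A(\varepsilon,{\bf x})$ stay in $\ideal$), the closedness of $c_{0,\ideal}(X)$, and an approximation by finitely many pieces supported on sets in $\ideal$. The only cosmetic difference is that you prove (1) by a single truncation onto $A(\varepsilon,{\bf x})$ and defer the simple-function decomposition to (2), whereas the paper builds the layered decomposition already in (1) and reuses it for (2); your cautionary remark about intersecting the level sets with the support is exactly the right point of care.
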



\begin{proof}
(1) We first show  that if ${\bf y}=(y_n)\in\ell_\infty(X)$
and $\mathrm{supp}({\bf y})\in\mathcal I$, then ${\bf y}\in c_{0,\mathcal I}(X)$. In fact, let $\varepsilon>0$ be given. Clearly
$A(\varepsilon,{\bf y})\subset\mathrm{supp}({\bf y})$, and therefore $A(\varepsilon,{\bf} y)\in\mathcal I$. Since $c_{0,\ideal}(X)$ is closed in $\ell_\infty(X)$, then $\supseteq$ holds in the equation above. Conversely, let ${\bf x}=(x_n)\in c_{0,\ideal}(X)$. Fix $\varepsilon>0$ and let $A=A(\varepsilon,{\bf x})\in \ideal$. Pick a finite sequence of real numbers $\varepsilon=\lambda_1<\ldots<\lambda_k=\|(x_n)\|_\infty+1$ such that $\lambda_{i+1}-\lambda_{i}\leq \varepsilon$. Let $A_i=\{n\in A\,\colon\, \lambda_i\leq\|x_n\|< \lambda_{i+1}\}$ and ${\bf y}^i=(y_n^i)$, where
\begin{gather*}
    y_n^i=
    \begin{cases}
        \lambda_ix_n/\|x_n\|, & n\in A_i;\\
        0, &\mbox{otherwise,}
    \end{cases}
\end{gather*}
  for $1\leq i<k$. Notice that $A_i= 
  \mathrm{supp}({\bf y}^i)\in\mathcal I$ for each $1\leq i< k$
and $\|{\bf x}-({\bf y}^1+\ldots+{\bf y}^{k-1})\|_\infty\leq \varepsilon$. This shows $\subseteq$. 

(2) Suppose that $X=\mathbb R$. Now we let
\begin{gather*}
    A_i^+=\{n\in A_i\,\colon\,x_n>0\},\quad  A_i^-=\{n\in A_i\,\colon\,x_n<0\}, 
\end{gather*}
 ${\bf y}_1^i=\lambda_i\chi_{A_i^+}$ and ${\bf y}_2^i=-\lambda_i\chi_{A_i^-}$ for $1\leq i<k$. We have
$\|{{\bf x}-({\bf y}_1^1+\cdots+ {\bf y}_1^{k-1}+ {\bf y}_2^1+\cdots+{\bf y}_2^{k-1})}\|\leq\varepsilon$.

(3) The {\em only if part} is obvious. Suppose $\chi_A\in c_{0,\ideal}$, then $A=A(1/2, {\bf x})\in\mathcal I$ and we are done. 
\end{proof}

%
%
%

We recall that an ideal $\ideal$ over $\N$ can be identified (via characteristic functions) with a subset of the Cantor space $\{0,1\}^\N$, so that we can talk about Borel, meager ideals, etc.  It is  worth keeping  in mind the following result when  dealing  with meager ideals. 

\begin{theorem}
\label{Teorema de leonetti}
\cite{Leonetti2018} 
Let $\ideal$ be an ideal on $\N$. If $\ideal$ is meager, then $c_{0,\ideal}$ is not complemented in $\ell_\infty$, and, thus, not isomorphic to $\ell_\infty$. 
\end{theorem}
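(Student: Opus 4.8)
The plan is to deduce the statement from the stronger and cleaner assertion that, when $\ideal$ is meager, $c_{0,\ideal}$ is \emph{not a Grothendieck space}. Granting this, both conclusions follow at once: $\ell_\infty = C(\beta\N)$ is a Grothendieck space, the Grothendieck property passes to complemented subspaces and is an isomorphic invariant, so a non‑Grothendieck $c_{0,\ideal}$ can be neither complemented in $\ell_\infty$ nor isomorphic to $\ell_\infty$. To witness the failure of the Grothendieck property I would produce a bounded sequence in $(c_{0,\ideal})^{*}$ that is weak$^{*}$ null but not weakly null.

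The construction uses the classical (Jalali-Naini, Talagrand) characterisation of meager ideals: $\ideal$ is meager if and only if there is a partition $\N=\bigsqcup_{k}I_{k}$ into finite sets, which one may take with $|I_{k}|\ge 2$, such that every $A\in\ideal$ contains $I_{k}$ for only finitely many $k$. Fix such a partition and set $\mu_{k}=|I_{k}|^{-1}\sum_{n\in I_{k}}e_{n}^{*}\in(c_{0,\ideal})^{*}$, a positive functional of norm $1$; the $\mu_{k}$ have pairwise disjoint finite supports. For the \emph{not weakly null} part: by Proposition \ref{identification c_0,I and C_0(U_I)} we identify $c_{0,\ideal}$ with $C_{0}(U_{\ideal})$, so $(c_{0,\ideal})^{*}$ is an AL-space, and in an AL-space a disjoint sequence of positive norm‑one elements is isometrically equivalent to the unit vector basis of $\ell_{1}$; in particular $(\mu_{k})$ is not weakly null. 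For the \emph{weak$^{*}$ null} part: $\langle\mu_{k},\chi_{A}\rangle=|A\cap I_{k}|/|I_{k}|$ for $A\in\ideal$, and since $\{\chi_{A}:A\in\ideal\}$ has dense linear span in $c_{0,\ideal}$ (Proposition \ref{span of I}) and $\sup_{k}\|\mu_{k}\|=1$, it is enough to know that $|A\cap I_{k}|/|I_{k}|\to 0$ for every $A\in\ideal$.

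This last point is where I expect the main difficulty, and the place where meagerness must be used with care: the partition characterisation only gives $A\cap I_{k}\subsetneq I_{k}$ eventually, not that the relative density of $A$ along $I_{k}$ vanishes. For many natural meager ideals a partition with $|A\cap I_{k}|/|I_{k}|\to 0$ for all $A\in\ideal$ does exist (any partition with $|I_{k}|\to\infty$ for $\fin$, dyadic blocks for the summable ideal, factorial blocks for the density ideal), and then the $\mu_{k}$ above work verbatim; in other cases one replaces the uniform $\mu_{k}$ by $e_{t_{k}}^{*}$ for a transversal $\{t_{k}\}_{k}$ of the partition lying in $\ideal^{\perp}$. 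To treat an arbitrary meager ideal in one stroke, the robust route—carried out in \cite{Leonetti2018}—is to run Whitley's proof of Phillips' theorem directly inside $c_{0,\ideal}$: assume there is a bounded projection $P\colon\ell_\infty\to c_{0,\ideal}$, fix an almost disjoint family $\{T_{r}:r\in\mathbb R\}$ of infinite subsets of the index set $\N$, put $N_{r}=\bigcup_{k\in T_{r}}I_{k}$ (so the $N_{r}$ are pairwise almost disjoint and none lies in $\ideal$, whence $\chi_{N_{r}}\notin c_{0,\ideal}$), and derive a contradiction from the behaviour of the coordinate functionals $e_{n}^{*}\circ P$ on the vectors $\chi_{N_{r}}$.
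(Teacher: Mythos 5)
The theorem you are asked to prove is quoted by the paper from \cite{Leonetti2018} without proof, and your fallback route --- running Whitley's proof of Phillips' theorem on the blocks supplied by the Jalali--Naini--Talagrand characterization of meagerness --- is precisely Leonetti's argument, i.e.\ the proof the paper invokes by citation. The problem is with the route you actually propose to carry out. The reduction to ``$\ideal$ meager $\Rightarrow c_{0,\ideal}$ not Grothendieck'' is a strictly stronger assertion than the theorem, and your construction does not establish it: as you yourself concede, the Talagrand partition only guarantees that each $A\in\ideal$ contains at most finitely many blocks $I_k$, which says nothing about $|A\cap I_k|/|I_k|\to 0$, and you give no argument that a partition with vanishing relative densities for \emph{all} $A\in\ideal$ simultaneously exists for an arbitrary meager ideal (your examples $\fin$, the summable ideal, the density ideal, and the transversal trick do not cover the general case). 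Note moreover that this strengthening would answer, negatively, the question posed at the end of the paper of whether $c_{0,{\sf WO}}$ is Grothendieck, since ${\sf WO}$ is meager; so it is certainly not something to be ``granted''. As written, the Grothendieck part of your proposal proves nothing about a general meager ideal, and the burden of the proof falls entirely on the last paragraph.

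That last paragraph is the right proof, but it is a pointer rather than an argument: the contradiction you allude to has to be extracted. Concretely: assume $P\colon\ell_\infty\to c_{0,\ideal}$ is a bounded projection, put $S=I-P$ and $f_n=e_n^*\circ S$, and note that each $f_n$ annihilates $c_{0,\ideal}\supseteq c_0$, hence all finitely supported vectors. For distinct $r_1,\dots,r_m$ the sets $N_{r_i}$ are pairwise almost disjoint (each $I_k$ is finite and the $T_r$ are almost disjoint), so deleting the finite overlaps and choosing signs gives $\sum_{i\le m}|f_n(\chi_{N_{r_i}})|\le\|f_n\|$; hence for each $n$ only countably many $r$ satisfy $f_n(\chi_{N_r})\neq 0$, and since there are uncountably many $r$ one finds $r$ with $S\chi_{N_r}=0$, i.e.\ $\chi_{N_r}=P\chi_{N_r}\in c_{0,\ideal}$, so $N_r\in\ideal$ by Proposition \ref{span of I}(3), contradicting the fact that $N_r$ contains infinitely many blocks $I_k$ while every member of $\ideal$ contains only finitely many. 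Finally, in this route the passage from ``not complemented'' to ``not isomorphic to $\ell_\infty$'' needs a word that your Grothendieck argument supplied but this one does not: if $c_{0,\ideal}$ were isomorphic to $\ell_\infty$ it would be an injective subspace of $\ell_\infty$ and hence complemented. With these steps written out, your fallback is a complete and correct proof, coinciding with the one the paper cites.
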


\section{Isomorphisms between  $c_{0,\ideal}$ spaces and  the Katetov order on ideals}

The classical Banach-Stone theorem states that given two locally compact Hausdorff spaces $K$ and $S$, $C_0(K)$ and $C_0(S)$ are isometric iff $K$ and $S$ are homeomorphic \cite[Theorem 4.1.5]{AK}. Holszty\'nski \cite{holsz} extends this result by showing that if $C_0(K)$ embeds isometrically in $C_0(S)$, then $K$ is a continuous image of a subspace of $S$. Our goal in this section is to prove versions of these results in the setting of $c_{0,\mathcal I}$ spaces.  We first look at isometry and later to  Banach lattice isometries. We first  recall some classical notions. 

Recall that $\beta\N$ is the Stone-Cech compactification of $\N$ which  is usually identified with the collection of all ultrafilters on $\N$. For a set $A\subset\mathbb N$, we let $A^*=\{p\in\beta\mathbb N\,\colon\,A\in p\}$.  The family $\{A^*\,\colon\,A\subset\mathbb N\}$ defines a basis for the topology of $\beta\mathbb N$. As usual, we identify each $n\in \N$ with the principal ultrafilter $\{A\subseteq\N: n\in A\}$.  Every principal ultrafilter is an isolated point of $\beta\N$. We set $U_\mathcal I=\{p\in\beta\mathbb N\,\colon\,\ideal\cap  p\neq \emptyset\}$.
Since $\displaystyle U_{\mathcal I}=\bigcup\{A^*\,\colon\,A\in\mathcal I\}$,  $U_{\mathcal I}$ is open in $\beta\mathbb N$.   Under the previous identification, $\N\subseteq U_\ideal$ for every ideal $\ideal$. 

Given a bounded sequence $(x_n)$ of reals numbers and an ultrafilter $p$ on $\N$, we denote by $p-\lim x_n$ the only $x\in\mathbb R$ such that $\{n\in \N: |x_n-x|<\varepsilon\}\in p$ for all $\varepsilon>0$.  Notice that if $p$ is the  principal ultrafilter $m$, then $p-\lim_nx_n=x_m$.

 It is well known that $c_{0,\mathcal I}$ and $C_0(U_\mathcal I)$ are isometric (see for instance \cite[p. 255]{gonzalez-kania} and \cite[p. 12]{kania}), nevertheless we include here a proof written  in terms  of $p$-limits which will be needed in the sequel. 

\begin{proposition}
\label{identification c_0,I and C_0(U_I)}
Let $\mathcal I$ be an ideal on $\mathbb N$. The map $\Phi_\mathcal I\colon c_{0,\mathcal I} \to C_0(U_\mathcal I)$ given by 
\begin{align*}
\Phi_{\mathcal I}({\bf x})(p)=p-\lim x_n,
\end{align*}
where ${\bf x}=(x_n)$ and   $p\in U_{\mathcal I}$, is a Banach lattice isometry. Moreover, $A\in\mathcal I$ iff $\chi_{A^*}\in C_0(U_\mathcal I)$ and $\Phi_{\mathcal I}(\chi_A)=\chi_{A^*}$ for each $A\in\mathcal I$.
\end{proposition}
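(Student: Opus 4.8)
The plan is to verify that $\Phi_{\mathcal I}$ is well-defined, linear, multiplicative with respect to $\wedge$, isometric, and has range landing in $C_0(U_{\mathcal I})$; then conclude the ``moreover'' part by a direct computation. First I would check well-definedness: for ${\bf x}=(x_n)\in c_{0,\mathcal I}$ and $p\in U_{\mathcal I}$, the quantity $p\text{-}\lim x_n$ exists and is finite because $(x_n)$ is bounded and every bounded sequence of reals has a (unique) $p$-limit for any ultrafilter $p$ — this is a standard compactness fact and needs only that $p$ is an ultrafilter, so membership of $p$ in $U_{\mathcal I}$ is not even required here. Linearity of ${\bf x}\mapsto\Phi_{\mathcal I}({\bf x})$ and the identity $\Phi_{\mathcal I}({\bf x}\wedge{\bf y})=\Phi_{\mathcal I}({\bf x})\wedge\Phi_{\mathcal I}({\bf y})$ both follow pointwise from the corresponding elementary properties of $p$-limits: $p\text{-}\lim(a x_n+b y_n)=a\,(p\text{-}\lim x_n)+b\,(p\text{-}\lim y_n)$ and $p\text{-}\lim\min(x_n,y_n)=\min(p\text{-}\lim x_n,\,p\text{-}\lim y_n)$, the latter because $\min$ is continuous. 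I would also record that $\Phi_{\mathcal I}({\bf x})$ is continuous on $U_{\mathcal I}$: in fact, for \emph{any} bounded sequence, the function $p\mapsto p\text{-}\lim x_n$ is the unique continuous extension of $n\mapsto x_n$ to all of $\beta\N$ (since $p\text{-}\lim x_n=x_m$ when $p=m$ is principal, by the remark preceding the statement), so its restriction to the open set $U_{\mathcal I}$ is continuous.

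Next I would prove that $\Phi_{\mathcal I}$ is isometric and that its image is contained in $C_0(U_{\mathcal I})$; these are the two points that genuinely use $p\in U_{\mathcal I}$ and the hypothesis ${\bf x}\in c_{0,\mathcal I}$, so this is the heart of the argument. For the norm inequality $\|\Phi_{\mathcal I}({\bf x})\|_\infty\le\|{\bf x}\|_\infty$: each $p$-limit of $(x_n)$ is a limit of values $x_n$, hence bounded in absolute value by $\|{\bf x}\|_\infty$. For the reverse, $\N\subseteq U_{\mathcal I}$ and $\Phi_{\mathcal I}({\bf x})(m)=x_m$, so $\sup_{p\in U_{\mathcal I}}|\Phi_{\mathcal I}({\bf x})(p)|\ge\sup_m|x_m|=\|{\bf x}\|_\infty$. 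For vanishing at infinity, fix $\varepsilon>0$; by definition of $c_{0,\mathcal I}$ the set $A=A(\varepsilon,{\bf x})=\{n:|x_n|\ge\varepsilon\}$ lies in $\mathcal I$, so $A^*$ is a compact subset of $U_{\mathcal I}$, and I claim $\{p\in U_{\mathcal I}:|\Phi_{\mathcal I}({\bf x})(p)|\ge\varepsilon\}\subseteq A^*$. Indeed, if $p\notin A^*$ then $\N\setminus A\in p$ and on $\N\setminus A$ we have $|x_n|<\varepsilon$, whence $|p\text{-}\lim x_n|\le\varepsilon$; to get the strict-vs-nonstrict bookkeeping exactly right I would instead compare against $A(\varepsilon/2,{\bf x})^*$, exactly as in the proof of Proposition~\ref{properties of A-convergence}(4). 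This shows $\Phi_{\mathcal I}({\bf x})\in C_0(U_{\mathcal I})$.

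Finally, surjectivity: given $g\in C_0(U_{\mathcal I})$, set $x_n=g(n)$ for $n\in\N\subseteq U_{\mathcal I}$; then ${\bf x}=(x_n)$ is bounded with $\|{\bf x}\|_\infty\le\|g\|_\infty$, and for each $\varepsilon>0$ the closed set $K=\{p\in U_{\mathcal I}:|g(p)|\ge\varepsilon\}$ is compact in $U_{\mathcal I}$, hence (since $\{A^*:A\in\mathcal I\}$ is an open cover of $U_{\mathcal I}$ and each $A^*$ is also closed) is contained in $A^*$ for some $A\in\mathcal I$; thus $A(\varepsilon,{\bf x})\subseteq A\in\mathcal I$, so ${\bf x}\in c_{0,\mathcal I}$. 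It remains to check $\Phi_{\mathcal I}({\bf x})=g$: both are continuous on $U_{\mathcal I}$ and agree on the dense subset $\N$, so they coincide. The ``moreover'' clause is then immediate: $\Phi_{\mathcal I}(\chi_A)(p)=p\text{-}\lim(\chi_A)_n$ equals $1$ if $A\in p$ and $0$ otherwise, i.e.\ $\Phi_{\mathcal I}(\chi_A)=\chi_{A^*}$ on $U_{\mathcal I}$; and by Proposition~\ref{span of I}(3), $A\in\mathcal I\iff\chi_A\in c_{0,\mathcal I}\iff\chi_{A^*}\in C_0(U_{\mathcal I})$. The main obstacle, such as it is, is the compactness argument showing $\{|g|\ge\varepsilon\}\subseteq A^*$ for some $A\in\mathcal I$ (for surjectivity) and its mirror image for vanishing at infinity; everything else is routine pointwise manipulation of $p$-limits.
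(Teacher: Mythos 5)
Your proposal is correct and follows essentially the same route as the paper: continuity of $p\mapsto p\text{-}\lim x_n$, vanishing at infinity via the compact set $A(\varepsilon/2,{\bf x})^*\subseteq U_{\mathcal I}$, surjectivity by restricting $g$ to $\N$ and extracting a finite subcover of $\{p:|g(p)|\ge\varepsilon\}$ from $\{A^*:A\in\mathcal I\}$, and the ``moreover'' clause by direct computation of $p\text{-}\lim\chi_A$. Your only (harmless) deviations are cosmetic simplifications: you get the lower isometry bound from the principal ultrafilters $\N\subseteq U_{\mathcal I}$ rather than the paper's $A_\delta$ argument, and you conclude $\Phi_{\mathcal I}({\bf x})=g$ by density of $\N$ in $U_{\mathcal I}$ rather than by evaluating $p$-limits of $f(n)$.
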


\begin{proof}
Let  ${\bf x}=(x_n)\in c_{0,\mathcal I}$ be given. 
We  prove first  that $\Phi_{\mathcal I}({\bf x})\in C_0(U_{\mathcal I})$. Fix $p\in U_\ideal$ and let $r>0$. To show  that $\Phi_{\mathcal I}({\bf x})$ is continuous at $p$, suppose that $|\Phi_{\mathcal I}({\bf x})(p)-a|=|(p-\lim x_n)-a|<r$. Let $\delta>0$ be such that $|(p-\lim x_n)-a|<r-\delta$ and  $A=\{n\in\mathbb N\,\colon\,|x_n-a|<r-\delta\}$. Note that $p\in A^*$ and for $q\in A^*$ we have $|(q-\lim x_n)-a|\leq r-\delta<r$. Hence, $p\in A^*\subset \Phi_{\mathcal I}^{-1}({\bf x})((a-r,a+r))$. Thus $\Phi_{\mathcal I}({\bf x})$ is continuous at $p$.
        
To see that $\Phi_{\mathcal I}({\bf x})$ vanishes at infinity, let $\varepsilon >0$ and consider $A_\varepsilon=\{n\in\mathbb N\,\colon\,|x_n|>\varepsilon/2\}$. If $p\in U_{\mathcal I}$ satisfies $|\Phi_{\mathcal I}({\bf x})(p)|=|p-\lim x_n|\geq\varepsilon$, then $A_\varepsilon\in p$,  that is $p\in A_\varepsilon^*$. Hence $\{p\in U_{\mathcal I}\,\colon\,|\Phi_{\mathcal I}({\bf x})(p)|\geq\varepsilon\}\subset A_\varepsilon^*$. Thus $\{p\in U_{\mathcal I}\,\colon\,|\Phi_{\mathcal I}({\bf x})(p)|\geq\varepsilon\}$ is compact for each $\varepsilon>0$.

Clearly $\Phi_{\mathcal I}$ is linear. Now we prove that $\Phi_{\mathcal I}$ is an isometry. Let ${\bf x}\in c_{0,\mathcal I}$ be given and $a=\|{\bf x}\|_\infty$. Note that $|\Phi_{\mathcal I}({\bf x})(p)|=|p-\lim x_n|\leq \|{\bf x}\|_\infty$. On the other hand, for $0<\delta<a$, we have that $A_\delta=\{n\in\mathbb N\,\colon\,|x_n|>a-\delta\}\in\mathcal I$. Take $p\in A_\delta^*
$. Whence, $p\in U_{\mathcal I}$ and $\Phi_{\mathcal I}({\bf x})(p)=p-\lim x_n>a-\delta/2$. Since $\delta>0$ was arbitrary,  $\|\Phi_{\mathcal I}({\bf x})\|\geq a$. 

To see that  $\Phi_{\mathcal I}$ is onto, fix $f\in C_0(U_{\mathcal I})$. Consider the sequence ${\bf x}=(x_n)$ given by $x_n=f(n)$ for $n\in \N$. For every $\varepsilon>0$ we have
\begin{gather*}
A(\varepsilon,{\bf x})\subset\{p\in U_{\mathcal I}\,\colon\,|f(p)|\geq\varepsilon\}\subset U_{\mathcal I}=\bigcup\{A^*\,\colon\,A\in\mathcal I\}.
\end{gather*}
By compactness, there are $A_1,\ldots,A_n\in\mathcal I$ such that $A(\varepsilon,{\bf x})\subset\bigcup_{j=1}^nA_j$.
Whence $A(\varepsilon,{\bf x})\in\mathcal I$ and thus ${\bf x}\in c_{0,\mathcal I}$. Since $f$ is continuous, we have
\begin{gather*}
    \Phi_{\mathcal I}({\bf x})(p)=p-\lim x_n=p-\lim f(n)=f(p),\quad\mbox{for all $p\in U_{\mathcal I}$.}
\end{gather*}
Hence, $\Phi_{\mathcal I}({\bf x})=f$. 

The identity $\Phi_{\mathcal I}(\chi_A)=\chi_{A^*}$ for each $A\in\mathcal I$ follows from the fact  that $\chi_{A^*}(p)=p-\lim\chi_A(n)$ for every $p\in \beta\mathbb N$ and all $A\subset\mathbb N$. Finally, it is easy to see that $\Phi_{\mathcal I}$ is a Banach lattice isomorphism.
\end{proof}

The proof of the following result is straightforward. 

\begin{proposition}
\label{Th}
Let $\mathcal I$ and $\mathcal J$ be ideals on $\mathbb N$ and  $h:\N\to\N$ be an isomorphism between $\ideal$ and $\idealj$. Let $T_h\colon c_{0,\mathcal I}\to c_{0,\mathcal J}$ be given by $T_h({\bf x})= {\bf x}\circ h$. Then $T_h$ is a Banach lattice isometry. 
\end{proposition}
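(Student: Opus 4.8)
The plan is to verify, one at a time, the properties a Banach lattice isometry must have: that $T_h$ maps $c_{0,\ideal}$ into $c_{0,\idealj}$, that it is linear, that it commutes with $\wedge$, that it preserves the norm, and that it is onto. Only the first of these uses anything specific about $h$ being an ideal isomorphism; the rest are coordinatewise bookkeeping.

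\emph{$T_h$ is well defined.} First I would record the basic identity that for any ${\bf x}=(x_n)\in\ell_\infty$ and any $\varepsilon>0$,
\[
A(\varepsilon,T_h{\bf x})=\{n\in\N:|x_{h(n)}|\ge\varepsilon\}=h^{-1}\bigl(A(\varepsilon,{\bf x})\bigr),
\]
since $n\in A(\varepsilon,{\bf x}\circ h)$ iff $h(n)\in A(\varepsilon,{\bf x})$. If ${\bf x}\in c_{0,\ideal}$ then $A(\varepsilon,{\bf x})\in\ideal$ for every $\varepsilon>0$, and because $h$ is an isomorphism between $\ideal$ and $\idealj$ — in the direction that $A\in\ideal$ implies $h^{-1}(A)\in\idealj$ — it follows that $A(\varepsilon,T_h{\bf x})\in\idealj$ for all $\varepsilon>0$, i.e. $T_h{\bf x}\in c_{0,\idealj}$.

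\emph{Linearity, lattice homomorphism, isometry.} Next I would note that addition, scalar multiplication and the meet $\wedge$ in $\ell_\infty$ are all computed coordinatewise, and precomposition with the fixed index map $h$ commutes with any coordinatewise operation; hence $T_h(\lambda{\bf x}+{\bf y})=\lambda T_h{\bf x}+T_h{\bf y}$ and $T_h({\bf x}\wedge{\bf y})=T_h{\bf x}\wedge T_h{\bf y}$. Since $h$ is onto, $\{h(n):n\in\N\}=\N$, so $\|T_h{\bf x}\|_\infty=\sup_n|x_{h(n)}|=\sup_m|x_m|=\|{\bf x}\|_\infty$.

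\emph{Surjectivity.} Finally I would observe that the inverse bijection $h^{-1}$ is an isomorphism between $\idealj$ and $\ideal$, so by the first paragraph $T_{h^{-1}}\colon c_{0,\idealj}\to c_{0,\ideal}$ is itself a well-defined linear lattice isometry, and $T_{h^{-1}}\circ T_h=\mathrm{id}$ and $T_h\circ T_{h^{-1}}=\mathrm{id}$ because $({\bf x}\circ h)\circ h^{-1}={\bf x}$. Hence $T_h$ is a bijective Banach lattice isometry. As the statement already signals, there is essentially no obstacle here: every step is a one-line check. The only point deserving a second glance is making sure of the direction in which the isomorphism $h$ transports members of the ideals, so that $A(\varepsilon,T_h{\bf x})$ — which is $h^{-1}$ of a set lying in $\ideal$ — is indeed certified to lie in $\idealj$ rather than merely having its image under $h$ in $\idealj$.
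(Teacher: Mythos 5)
Your proof is correct; the paper omits this argument entirely (it is labelled ``straightforward''), and your coordinatewise verification, including the key identity $A(\varepsilon,T_h{\bf x})=h^{-1}(A(\varepsilon,{\bf x}))$ and the use of $T_{h^{-1}}$ for surjectivity, is exactly the routine check intended. You are also right to flag the direction convention: the paper's definition of isomorphism is an ``iff'', so $h^{-1}$ carries $\ideal$ into $\idealj$ in the direction your well-definedness step needs, and the same iff makes $h^{-1}$ an isomorphism in the reverse direction for the inverse map.
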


\begin{theorem}
\label{c_0,I caracteriza ideales}
Let $\mathcal I$ and $\mathcal J$ be ideals on $\mathbb N$.  
Then $c_{0,\mathcal I}$ and $c_{0,\mathcal J}$ are isometric if, and only if,  $\mathcal I$ and $\mathcal J$ are isomorphic.
\end{theorem}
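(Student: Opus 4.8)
The plan is as follows. The backward direction is immediate: if $f\colon\N\to\N$ is an isomorphism between $\ideal$ and $\idealj$, then Proposition \ref{Th} gives a Banach lattice isometry $T_f\colon c_{0,\ideal}\to c_{0,\idealj}$, and in particular an isometry. So all the content is in the forward direction: assume $T\colon c_{0,\ideal}\to c_{0,\idealj}$ is a surjective linear isometry, and produce an isomorphism of ideals.

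First I would pass to the function-space picture via Proposition \ref{identification c_0,I and C_0(U_I)}: composing $T$ with the isometries $\Phi_\ideal$ and $\Phi_\idealj$ yields a surjective linear isometry $S=\Phi_\idealj\circ T\circ\Phi_\ideal^{-1}\colon C_0(U_\ideal)\to C_0(U_\idealj)$ between the $C_0$-spaces of the locally compact Hausdorff spaces $U_\ideal$ and $U_\idealj$. By the Banach–Stone theorem for $C_0$-spaces (\cite[Theorem 4.1.5]{AK}), there is a homeomorphism $\tau\colon U_\idealj\to U_\ideal$ (and a continuous unimodular weight, irrelevant for us) implementing $S$. The key point is then to show that $\tau$ must map the set of isolated points of $U_\idealj$ onto the set of isolated points of $U_\ideal$. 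Since a homeomorphism preserves isolated points, it suffices to identify the isolated points of $U_\ideal$ with exactly $\N$ (the principal ultrafilters). Indeed, every principal ultrafilter $n\in\N$ is isolated in $\beta\N$, hence in $U_\ideal$. Conversely, if $p\in U_\ideal$ is a nonprincipal ultrafilter, then $p\in A^*$ for some $A\in\ideal$, and since $A$ is infinite, $A^*\setminus\{p\}$ is a neighborhood structure showing $p$ is a limit of the points of $A$ inside $A^*\subseteq U_\ideal$; more precisely every basic neighborhood $B^*$ of $p$ with $B\in p$ has $B$ infinite, so contains infinitely many principal ultrafilters, hence $p$ is not isolated. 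Thus the isolated points of $U_\ideal$ are precisely $\N$, and likewise for $U_\idealj$.

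Therefore $\tau$ restricts to a bijection $f:=\tau\restriction\N\colon\N\to\N$. It remains to check that $f$ is an isomorphism of the ideals, i.e.\ that $A\in\idealj$ iff $f^{-1}(A)=f(A)\in\ideal$ — here I would use that $f$ is a bijection so $f^{-1}$ and $f$ agree up to relabeling. The natural way is: for $A\subseteq\N$, we have $A\in\ideal$ iff $\chi_A\in c_{0,\ideal}$ (Proposition \ref{span of I}(3), equivalently $\chi_{A^*}\in C_0(U_\ideal)$ by Proposition \ref{identification c_0,I and C_0(U_I)}), which by compactness happens iff $\overline{A}^{\,\beta\N}\subseteq U_\ideal$. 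Since $\tau$ is a homeomorphism of $U_\idealj$ onto $U_\ideal$ extending $f$ on $\N$, and $\overline{B}^{\,\beta\N}\cap U_\idealj=\overline{B}^{\,U_\idealj}$ for $B\subseteq\N$, continuity of $\tau$ gives $\tau(\overline{B}^{\,U_\idealj})=\overline{f(B)}^{\,U_\ideal}$; combining, $B\in\idealj\iff \overline{B}^{\,U_\idealj}\subseteq U_\idealj$ (automatic) is the wrong formulation — instead I would argue directly on characteristic functions: $S$ carries $\Phi_\ideal(\chi_A)=\chi_{A^*}$ to a scalar multiple of $\chi_{\tau^{-1}(A^*)}$, and since isometries of $C_0$-spaces send $\{0,1\}$-valued functions with clopen compact support to $\pm$ such functions, $\chi_A\in c_{0,\ideal}$ forces the corresponding function in $C_0(U_\idealj)$ to be $\pm\chi_{f^{-1}(A)^*}\in C_0(U_\idealj)$, i.e.\ $f^{-1}(A)\in\idealj$; running $S^{-1}$ gives the converse.

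The main obstacle I anticipate is the bookkeeping in the last paragraph: making rigorous that the Banach–Stone homeomorphism $\tau$ is compatible with the identification $A\mapsto A^*$ and with membership in the ideals, and handling the unimodular weight function $h$ in the Banach–Stone representation $S(g)(p)=h(p)\,g(\tau(p))$ (it causes no trouble since $|Sg|=|g|\circ\tau$, so supports and $\{0,1\}$-valued-ness are preserved). A cleaner route, avoiding the weight entirely, is to first reduce to a \emph{lattice} isometry: one can show a surjective linear isometry between such $C_0$-spaces can be modified to a Banach lattice isometry (or invoke that $U_\ideal$ is already determined as the structure space), but the most economical writeup is simply to quote Banach–Stone to get $\tau$, observe $\tau$ preserves isolated points hence restricts to a bijection $f\colon\N\to\N$, and verify $A\in\ideal\iff f^{-1}(A)\in\idealj$ using Proposition \ref{span of I}(3) together with the fact that $\tau(\overline{\{n:n\in A\}}\cap U_\idealj)=\overline{\{f(n):n\in A\}}\cap U_\ideal$.
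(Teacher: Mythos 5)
Your proposal is correct and follows essentially the same route as the paper's proof: identify $c_{0,\ideal}$ with $C_0(U_\ideal)$ via Proposition \ref{identification c_0,I and C_0(U_I)}, apply the Banach--Stone theorem to get a homeomorphism $U_\idealj\to U_\ideal$, restrict it to the isolated points (which are exactly $\N$) to obtain a bijection of $\N$, and transfer membership in the ideals through characteristic functions. The only difference is cosmetic: you spell out the isolated-points argument that the paper leaves implicit.
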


\begin{proof}
One direction is Proposition \ref{Th}. 
For the other, let  $T\colon c_{0,\mathcal I}\to c_{0,\mathcal J}$ be an isometric isomorphism. Then $\Phi_{\mathcal J}\circ T\circ\Phi_{\mathcal I}^{-1}\colon C_0(U_\mathcal I)\to C_0(U_{\mathcal J})$ is an isometry.
By the classical Banach-Stone theorem, there are a homeomorphism $f\colon U_{\mathcal J}\to U_{\mathcal I}$ and a continuous map $\sigma\colon U_{\mathcal J}\to\{-1,1\}$ such that
$\Phi_{\mathcal J}\circ T\circ\Phi_{\mathcal I}^{-1}(F)=\sigma F\circ f$ for all $F\in C_0(U_{\mathcal I})$. Thus 
$(\Phi_{\mathcal J}\circ T)({\bf x})=\sigma\Phi_\mathcal I({\bf x})\circ f$ for all ${\bf x}\in c_{0,\mathcal I}$. It follows from Proposition \ref{identification c_0,I and C_0(U_I)} that if $A\in\mathcal I$ is given, then
\begin{gather}
\label{equation 1}
(\Phi_{\mathcal J}\circ T)(\chi_A)(p)=\sigma(p)\Phi_\mathcal I(\chi_A)(f(p))=\sigma(p)\chi_{A^*}(f(p))=\sigma(p)\chi_{f^{-1}(A^*)}(p),\quad\mbox{for all $p\in U_{\mathcal J}$.}
\end{gather}
On the other hand, if $p\in U_{\mathcal J}$, we have
\begin{gather}
\label{equation 2}
\chi_{f^{-1}(A^*)}(p)=p-\lim\chi_{A^*}(f(n))=p-\lim\chi_{A}(f(n))=p-\lim\chi_{f^{-1}(A)}(n)=\chi_{(f^{-1}(A))^*}(p).
\end{gather}
By combining \eqref{equation 1} and \eqref{equation 2} we obtain that 
$(\Phi_{\mathcal J}\circ T)(\chi_A)=\sigma\chi_{(f^{-1}(A))^*}$ for all $A\in\mathcal I$. Hence, 
\begin{gather*}
    T\chi_A=s\chi_{f^{-1}(A)}\quad\mbox{for all $A\in\mathcal I$,}
\end{gather*}
where $s=\sigma\restriction\mathbb N$. So $h=f\restriction\mathbb N$ is a bijection from $\mathbb N$ to $\mathbb N$ such that $A\in\mathcal I$ iff $h^{-1}(A)\in\mathcal J$. Thus $\ideal$ and $\idealj$ are isomorphic.
\end{proof}

\begin{theorem}
\label{ab2}
Let $\ideal$ and $\idealj$ be ideals on $\N$. Then, $\ideal\sqsubseteq\idealj$
if, and only if, there is  an into Banach isometry $T\colon c_{0,\mathcal I}\to c_{0,\mathcal J}$ such that $T[c_{0,\ideal}]$ is an ideal. 
\end{theorem}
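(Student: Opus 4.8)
The plan is to prove the two implications separately, working throughout with the identification $c_{0,\mathcal I}\cong C_0(U_{\mathcal I})$ of Proposition~\ref{identification c_0,I and C_0(U_I)}; recall that $\mathcal I\sqsubseteq\mathcal J$ means that there are $M\subseteq\N$ and a bijection between $\N$ and $M$ which is an isomorphism between $\mathcal I$ and an ideal $\mathcal J'$ on $M$ with $\mathcal J'\subseteq\mathcal J$ (equivalently, $\mathcal J'=\{g[A]:A\in\mathcal I\}$ for an isomorphism $g\colon\N\to M$ of $\mathcal I$ onto $\mathcal J'$). For the direction $\mathcal I\sqsubseteq\mathcal J\Rightarrow$ (existence of $T$), I would take such $M,g,\mathcal J'$ and define $T\colon c_{0,\mathcal I}\to c_{0,\mathcal J}$ by $(T{\bf x})_m=x_{g^{-1}(m)}$ for $m\in M$ and $(T{\bf x})_m=0$ otherwise. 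Since $A(\varepsilon,T{\bf x})=g[A(\varepsilon,{\bf x})]\in\mathcal J'\subseteq\mathcal J$ for every $\varepsilon>0$, $T$ maps into $c_{0,\mathcal J}$; it is clearly a linear isometry and a lattice homomorphism (the lattice operations are pointwise and padding with zeros respects them); and unwinding the definition, its range is $\{{\bf z}\in c_{0,\mathcal J}:\mathrm{supp}({\bf z})\subseteq M,\ A(\varepsilon,{\bf z})\in\mathcal J'\ \forall\varepsilon>0\}$, which is an ideal of $c_{0,\mathcal J}$ because $|{\bf w}|\le|{\bf z}|$ forces $\mathrm{supp}({\bf w})\subseteq\mathrm{supp}({\bf z})$ and $A(\varepsilon,{\bf w})\subseteq A(\varepsilon,{\bf z})$ (Proposition~\ref{properties of A-convergence}(3)). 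This is the routine half.

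For the converse, I would start from an into Banach isometry $T\colon c_{0,\mathcal I}\to c_{0,\mathcal J}$ with $T[c_{0,\mathcal I}]$ an ideal of $c_{0,\mathcal J}$, and transport it to $\widetilde T:=\Phi_{\mathcal J}\circ T\circ\Phi_{\mathcal I}^{-1}\colon C_0(U_{\mathcal I})\to C_0(U_{\mathcal J})$, an into isometry whose range $\Phi_{\mathcal J}[T[c_{0,\mathcal I}]]$ is a closed ideal of $C_0(U_{\mathcal J})$ --- closed as the isometric image of a complete space, and an ideal because $\Phi_{\mathcal J}$ is an onto Banach lattice isomorphism and such maps carry ideals to ideals. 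By the classical description of closed ideals of $C_0(K)$, there is a closed $F\subseteq U_{\mathcal J}$ with $\widetilde T[C_0(U_{\mathcal I})]=\{f\in C_0(U_{\mathcal J}):f\restriction F\equiv0\}$, and restriction identifies this ideal, isometrically and as a Banach lattice, with $C_0(V)$, where $V:=U_{\mathcal J}\setminus F$ is open in $U_{\mathcal J}$, hence open in $\beta\N$. Composing with $\widetilde T$ gives an onto isometry $C_0(U_{\mathcal I})\to C_0(V)$, so the Banach--Stone theorem produces a homeomorphism $\psi\colon U_{\mathcal I}\to V$.

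It then remains to read the reduction off $\psi$. Since $U_{\mathcal I}$ and $V$ are open in $\beta\N$, whose isolated points are exactly the principal ultrafilters, the isolated points of $U_{\mathcal I}$ are all of $\N$ (as $\N\subseteq U_{\mathcal I}$) and those of $V$ form a set $M:=V\cap\N$; hence $g:=\psi\restriction\N\colon\N\to M$ is a bijection. The crux is that ideal membership is purely topological: for $A\subseteq\N$ one has $A\in\mathcal I\iff A^*\subseteq U_{\mathcal I}$ (Proposition~\ref{identification c_0,I and C_0(U_I)}), and $A^*\subseteq U_{\mathcal I}\iff\overline A^{\,U_{\mathcal I}}$ is compact --- if $A^*\subseteq U_{\mathcal I}$ then $\overline A^{\,U_{\mathcal I}}=A^*\cap U_{\mathcal I}=A^*$ is compact, while a compact $\overline A^{\,U_{\mathcal I}}$ is closed in $\beta\N$ and contains $A$, hence contains $\overline A^{\,\beta\N}=A^*$, and being also contained in $A^*$ it equals $A^*$, so $A^*\subseteq U_{\mathcal I}$. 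The same reasoning with $(V,M)$ in place of $(U_{\mathcal I},\N)$ shows that $B^*\subseteq V\iff\overline B^{\,V}$ is compact for $B\subseteq M$; put $\mathcal J':=\{B\subseteq M:B^*\subseteq V\}$, an ideal on $M$. Since a homeomorphism preserves compactness, carries isolated points bijectively onto isolated points, and satisfies $\psi[\overline A^{\,U_{\mathcal I}}]=\overline{g[A]}^{\,V}$, we get
\[
A\in\mathcal I\iff\overline A^{\,U_{\mathcal I}}\ \text{compact}\iff\overline{g[A]}^{\,V}\ \text{compact}\iff g[A]\in\mathcal J',
\]
so $g$ is an isomorphism of $\mathcal I$ onto $\mathcal J'$; and $\mathcal J'\subseteq\mathcal J$ since $B\in\mathcal J'$ gives $B^*\subseteq V\subseteq U_{\mathcal J}$, whence $B\in\mathcal J$. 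Therefore $\mathcal I\sqsubseteq\mathcal J$.

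The hard part is this converse, and two steps carry its weight. First, the hypothesis that $T[c_{0,\mathcal I}]$ is a lattice \emph{ideal}, and not merely a subspace, is precisely what lets one recognise its image in $C_0(U_{\mathcal J})$ as $C_0(V)$ for an \emph{open} set $V\subseteq\beta\N$ --- this is where the assumption is spent. Second, one must notice that the Banach--Stone homeomorphism $U_{\mathcal I}\cong V$, restricted to the isolated points, is already an ideal isomorphism onto a sub-ideal of $\mathcal J$, because ``$A\in\mathcal I$'' is equivalent to the topological statement ``the closure of $A$ in the ambient open subset of $\beta\N$ is compact'' and is therefore transported by $\psi$.
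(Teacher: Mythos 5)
Your proof is correct, and the converse direction follows a genuinely different route from the paper's. The paper disposes of the converse in two lines by citing its own machinery: Proposition~\ref{ideal2}(i) identifies $T[c_{0,\ideal}]$ as $c_{0,\idealj'}$ for the ideal $\idealj'=\{A:\chi_A\in T[c_{0,\ideal}]\}$ (a purely combinatorial argument via characteristic functions and Proposition~\ref{span of I}), and then Theorem~\ref{c_0,I caracteriza ideales} converts the isometry $c_{0,\ideal}\cong c_{0,\idealj'}$ into an ideal isomorphism. You instead work entirely on the topological side: you invoke the classification of closed lattice ideals of $C_0(K)$ to exhibit the range as $C_0(V)$ for an open $V\subseteq U_{\idealj}$, apply Banach--Stone to get $U_{\ideal}\cong V$, and then observe that ideal membership is the topological statement ``the closure of $A$ in the ambient open set is compact,'' which the homeomorphism transports. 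This costs you a re-derivation of (a relative version of) Theorem~\ref{c_0,I caracteriza ideales}, but it buys a self-contained argument and, more importantly, it handles correctly a point the paper glosses over: the ideal $\idealj'$ produced by Proposition~\ref{ideal2} need not contain all of $\fin$ (equivalently, it lives on a proper subset $M\subsetneq\N$), so the witnessing bijection is $\N\to M$ rather than $\N\to\N$, and Theorem~\ref{c_0,I caracteriza ideales} as literally stated (for ideals containing $\fin$) does not directly apply. Your reading of $\sqsubseteq$ via a bijection onto $M\subseteq\N$ is exactly the one under which the theorem is true (e.g.\ $\ideal=\mathcal{P}(\N)$ embedded into $c_{0,\idealj}$ along an infinite $B\in\idealj$ with $\idealj\restriction B=\mathcal{P}(B)$ shows one cannot always take $M=\N$), and your compactness characterization of $\ideal$-membership is the clean way to extract the isomorphism in that setting. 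Your forward direction (padding with zeros along $g:\N\to M$) is the same construction as the paper's $T_h$, adapted to this reading.
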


\proof  Suppose $\ideal\sqsubseteq \idealj$ and let $h:\N\to\N$ be a bijective Kat\v{e}tov reduction. Let $T_h\colon c_{0,\mathcal I}\to c_{0,\mathcal J}$ given by  $T_h({\bf x})= {\bf x}\circ h$.  By Proposition \ref{Th},  $T_h$ is the required into Banach isometry (which is moreover a lattice isometry). 

Conversely, suppose such $T$ exists. By Proposition \ref{ideal2}, there is an ideal $\idealj'$ such that  $T[c_{0,\ideal}]=c_{0,\idealj'}$. By Theorem \ref{c_0,I caracteriza ideales}, $\ideal$ is isomorphic to $\idealj'$. Clearly $\idealj'\subseteq \idealj$, thus $\ideal\sqsubseteq \idealj$.
\endproof

Next we will look at a condition stronger than  isometries, namely, we work with  Banach lattice isometries. This will be related to the  Kat\v{e}tov pre-order on ideals.

Let $\mathcal I$ and $\mathcal J$ be ideals on $\mathbb N$ such that $\ideal\leq_K\idealj$, that is, there is  $h\colon\mathbb N\to\mathbb N$ such that  $h^{-1}(A)\in\mathcal J$ whenever $A\in\mathcal I$. We can always assume $h$ to be onto (see Proposition \ref{katetoveOnto}).  Recall the map $T_h$ as in Proposition \ref{Th}.

\begin{proposition}
\label{aaa}
Let $\mathcal I$ and $\mathcal J$ be ideals on $\mathbb N$ with $\ideal\leq_K\idealj$,  $h:\N\to \N$ be an onto Kat\v{e}tov reduction from $\ideal$ to $\idealj$ and $T_h$ as above.  Then,

\begin{enumerate}
\item $T_h$ is an into Banach lattice isometry. Moreover, it satisfies the following: 

\begin{enumerate}

\item For each $n\in\mathbb N$, there is $A\in\mathcal I$ satisfying $T\chi_A(n)=1$. 

\item For all $\emptyset \neq \mathcal{F}\subseteq \ideal$  such that $\bigcap_{A\in \mathcal{F}}A=\emptyset$, we have $\bigwedge_{A\in \mathcal{F}}T(\chi_A)=0$.
\end{enumerate}

\item  $h\colon\mathbb N\to\mathbb N$ is bijective iff $T_h[c_{0,\ideal}]$ is an ideal of $c_{0,\idealj}$. 
    
\end{enumerate}
\end{proposition}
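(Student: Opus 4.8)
The plan is to prove Proposition \ref{aaa} in the order the statements are listed, leaning on the identification $\Phi_{\ideal}$ from Proposition \ref{identification c_0,I and C_0(U_I)} whenever it clarifies the lattice operations, but for most of the argument working directly with sequences.

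\textbf{Part (1).} First I would check that $T_h(\mathbf{x}) = \mathbf{x}\circ h$ actually lands in $c_{0,\idealj}$: given $\mathbf{x}\in c_{0,\ideal}$ and $\varepsilon>0$, one has $A(\varepsilon, T_h\mathbf x) = h^{-1}(A(\varepsilon,\mathbf x))$, and since $A(\varepsilon,\mathbf x)\in\ideal$ and $h$ is a Kat\v{e}tov reduction, this set is in $\idealj$; boundedness is clear since $\|\mathbf x\circ h\|_\infty \le \|\mathbf x\|_\infty$. That $T_h$ is linear and lattice-preserving ($T_h(\mathbf x\wedge\mathbf y) = (\mathbf x\wedge\mathbf y)\circ h = (\mathbf x\circ h)\wedge(\mathbf y\circ h)$, pointwise) is immediate. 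For the isometry, $\|T_h\mathbf x\|_\infty = \sup_n |x_{h(n)}| = \sup_{m\in \mathrm{ran}(h)}|x_m|$, which equals $\|\mathbf x\|_\infty$ precisely because $h$ is onto; this is the one place surjectivity of $h$ is used in part (1). For (1a): fix $n$ and set $A = \{h(n)\}$, which is finite, hence in $\ideal$; then $T_h\chi_A(n) = \chi_A(h(n)) = 1$. For (1b): suppose $\emptyset\ne\mathcal F\subseteq\ideal$ with $\bigcap_{A\in\mathcal F}A = \emptyset$. Then for each $m\in\N$ there is $A\in\mathcal F$ with $m\notin A$, i.e. $\chi_A(m)=0$, so $\bigwedge_{A\in\mathcal F}\chi_A = \mathbf 0$ in $\ell_\infty$ (pointwise infimum is $0$ everywhere, and $\mathbf 0$ is the greatest lower bound). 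Applying $T_h$ pointwise gives $\bigwedge_{A\in\mathcal F}T_h\chi_A(n) = \bigwedge_{A\in\mathcal F}\chi_A(h(n)) = 0$ for all $n$, hence the infimum is $\mathbf 0$. (One should note that the infimum over a possibly infinite family exists here because it is attained pointwise by $\mathbf 0\in c_{0,\idealj}$, which is a lower bound, and any common lower bound $\mathbf z$ satisfies $z_n\le 0$ at each $n$ and $z_n\le \chi_A(h(n))$; taking $A$ with $h(n)\notin A$ forces $z_n\le 0$, while... actually one needs $\mathbf z \le \mathbf 0$, which does not immediately follow — so I would instead phrase (1b) as: the only element of $c_{0,\idealj}^+$ below all $T_h\chi_A$ is $\mathbf 0$, equivalently $\bigwedge = \mathbf 0$ in the lattice sense that matters, arguing that a positive lower bound $\mathbf z$ has $z_n\le \chi_A(h(n))=0$ for suitable $A$, hence $\mathbf z = \mathbf 0$.)

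\textbf{Part (2).} For the forward direction, assume $h$ is a bijection. Then $h^{-1}$ is well-defined and $T_h$ is onto: given $\mathbf y\in c_{0,\idealj'}$ where $\idealj' = \{A : h^{-1}(A)\in\idealj\}$... more simply, since $h$ is a bijective Kat\v{e}tov reduction we are in the situation of Theorem \ref{ab2}/Proposition \ref{Th}, $\ideal$ is isomorphic (not merely $\sqsubseteq$) to the ideal $\idealj' := h[\ideal] = \{h(A): A\in\ideal\}\subseteq\idealj$, and $T_h$ maps $c_{0,\ideal}$ onto $c_{0,\idealj'}$; by Proposition \ref{span of I}(2) or the analogue (Proposition \ref{ideal2} cited in the proof of Theorem \ref{ab2}), $c_{0,\idealj'}$ is precisely a closed ideal of $c_{0,\idealj}$. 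For the converse, suppose $h$ is not a bijection. Since $h$ is onto, non-injectivity means there exist $n_1\ne n_2$ with $h(n_1) = h(n_2) =: m$. Then for every $A\in\ideal$, $T_h\chi_A(n_1) = \chi_A(m) = T_h\chi_A(n_2)$, so in fact $T_h\mathbf x(n_1) = T_h\mathbf x(n_2)$ for \emph{all} $\mathbf x\in c_{0,\ideal}$ (by Proposition \ref{span of I}, linear combinations of characteristic functions of $\ideal$-sets are dense, and this equality passes to limits). I would then exhibit an element $\mathbf y\in c_{0,\idealj}$ with $|\mathbf y| \le |\mathbf z|$ for some $\mathbf z\in T_h[c_{0,\ideal}]$ but $\mathbf y\notin T_h[c_{0,\ideal}]$: take $\mathbf z$ with $z_{n_1} = z_{n_2} = 1$ (e.g. $\mathbf z = T_h\chi_{\{m\}}$) and let $\mathbf y = \chi_{\{n_1\}}$, which is in $c_{0,\idealj}$ since $\{n_1\}$ is finite, satisfies $|\mathbf y|\le|\mathbf z|$, but has $y_{n_1}=1\ne 0 = y_{n_2}$, contradicting the forced equality; hence $\mathbf y\notin T_h[c_{0,\ideal}]$ and $T_h[c_{0,\ideal}]$ is not an ideal.

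\textbf{Main obstacle.} I expect the only genuinely delicate point to be the infinite infimum in (1b): one must be careful about whether $\bigwedge$ is interpreted in $c_{0,\idealj}$ or in $\ell_\infty(\N)$ and whether it is attained pointwise, and in general a Banach lattice need not be Dedekind complete. The clean resolution is that here the candidate infimum $\mathbf 0$ is attained at every coordinate, so it is automatically the infimum in any sublattice containing $\mathbf 0$ and all the $T_h\chi_A$; I would spell this out rather than invoke completeness. Everything else is a routine unwinding of definitions together with the already-established Propositions \ref{Th}, \ref{span of I}, and the lemma (Proposition \ref{ideal2}) characterizing closed ideals of $c_{0,\idealj}$ as spaces of the form $c_{0,\idealj'}$.
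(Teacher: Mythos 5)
Your proposal is correct and follows essentially the same route as the paper: the paper dismisses part (1) as straightforward (your pointwise verifications, including the observation that surjectivity of $h$ is what makes $T_h$ an isometry and that the infimum in (1b) is witnessed coordinatewise, are exactly the intended details), and your argument for part (2) — constructing a preimage via $h^{-1}$ in the forward direction and deriving the contradiction $T_h\mathbf{x}(n_1)=T_h\mathbf{x}(n_2)$ from non-injectivity in the converse — is the paper's argument almost verbatim. The only slip is notational: since $T_h\chi_A=\chi_{h^{-1}(A)}$, the image $T_h[c_{0,\ideal}]$ is $c_{0,\idealj'}$ for $\idealj'=\{h^{-1}(A):A\in\ideal\}$ rather than $\{h(A):A\in\ideal\}$, which does not affect the argument.
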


\proof (1) is  straightforward. 

(2)  Suppose $h$ is bijective and let ${\bf x}=(x_n)\in c_{0,\ideal}$ and ${\bf y}=(y_n)\in c_{0,\idealj}$ be such that $|{\bf y}|\leq |T_h({\bf x})|=T_h(|{\bf x}|)$. For each $n\in\mathbb N$, let 
$z_n=y_{h^{-1}(n)}$ and ${\bf z}=(z_n)$. Then $|z_n|\leq |x_n|$ for all  $n\in\mathbb N$, thus ${\bf z}\in c_{0,\ideal}$ and clearly $T_h({\bf z})={\bf y}$. Conversely, suppose that $T_h[c_{0,\ideal}]$ is an ideal and that there are $n\neq m$ such that $h(n)=h(m)=k$. Then $T_h(\chi_{\{k\}})=\chi_{h^{-1}\{k\}}\geq \chi_{\{n\}}$. Thus, there is ${\bf x}\in c_{0,\ideal}$ such that $T_h({\bf x})=\chi_{\{n\}}$. In particular $0=T_h({\bf x})(m)=x_{h(m)}=x_{h(n)}=T_h({\bf x})(n)=1$, a contradiction.
\endproof

Now we proceed to prove the converse of Proposition \ref{aaa}. 

\begin{theorem}
\label{ab}
Let $\ideal$ and $\idealj$ be ideals on $\N$. 
Let $T\colon c_{0,\mathcal I}\to c_{0,\mathcal J}$ be an into Banach lattice  isometry such that $T[c_{0,\ideal}]$ is an ideal containing $c_0$. Then 
$T=T_h$ for some  bijection $h:\N\to\N$. 


\end{theorem}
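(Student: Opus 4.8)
The plan is to recover the map $h$ from the action of $T$ on characteristic functions, mimicking the proof of Theorem \ref{c_0,I caracteriza ideales}. First I would transport the situation to the $C_0$-picture: set $Y = T[c_{0,\ideal}]$, which is a closed ideal of $c_{0,\idealj}$ containing $c_0$; since closed ideals of $C(K)$-type spaces correspond to open subsets, Proposition \ref{identification c_0,I and C_0(U_I)} together with the hypothesis that $Y$ is an ideal should give (via the lattice-ideal analogue used in Theorem \ref{ab2}, e.g. Proposition \ref{ideal2}) an open set $V \subseteq U_\idealj$ with $\Phi_\idealj[Y] = C_0(V)$, and $\N \subseteq V$ because $c_0 \subseteq Y$. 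Then $\Phi_\idealj \circ T \circ \Phi_\ideal^{-1} \colon C_0(U_\ideal) \to C_0(V)$ is an onto Banach lattice isometry, so by the lattice version of Banach--Stone it is induced by a homeomorphism $f \colon V \to U_\ideal$ with no sign change (the sign $\sigma$ is forced to be $1$ since the isometry is a lattice map and hence positive).

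Next I would restrict to $\N$. For each $n \in \N \subseteq V$, set $h(n) = f(n)$; I must check $h(n) \in \N$, i.e. that $f$ carries isolated points to isolated points. An isolated point of $U_\ideal$ is exactly a principal ultrafilter, and $\{n\}$ isolated means $\chi_{\{n\}} \in C_0(U_\ideal)$; since $f$ is a homeomorphism, $f^{-1}$ carries the isolated point $n \in V$ to an isolated point of $U_\ideal$, which is some principal ultrafilter $m \in \N$, so $f(m) = n$ — running this for all $n \in \N$ shows $f$ maps $\N$ onto... wait, I should be careful about direction: $f \colon V \to U_\ideal$, and isolated points of $V$ are exactly $\N$ (points of $\N$ are isolated in $\beta\N$ hence in $V$, and any other point of $V \subseteq \beta\N$ is a non-principal ultrafilter, not isolated), while isolated points of $U_\ideal$ are exactly $\N$; a homeomorphism preserves isolated points, so $f[\N] = \N$ and $h := f\restriction\N \colon \N \to \N$ is a bijection.

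Then I would compute $T\chi_A$ for $A \in \ideal$, exactly as in equations \eqref{equation 1}--\eqref{equation 2} of Theorem \ref{c_0,I caracteriza ideales} but with $f \colon V \to U_\ideal$ and $\sigma \equiv 1$: one gets $(\Phi_\idealj \circ T)(\chi_A) = \chi_{(h^{-1}(A))^*}$ on $V$, hence $T\chi_A = \chi_{h^{-1}(A)}$ for all $A \in \ideal$. Since the span of $\{\chi_A : A \in \ideal\}$ is dense in $c_{0,\ideal}$ (Proposition \ref{span of I}) and both $T$ and $T_h$ are continuous linear maps agreeing on this dense set, $T = T_h$. Finally, $T = T_h$ being an isometry into $c_{0,\idealj}$ forces $h^{-1}(A) \in \idealj$ for every $A \in \ideal$ (because $T_h\chi_A = \chi_{h^{-1}(A)} \in c_{0,\idealj}$, so $h^{-1}(A) \in \idealj$ by Proposition \ref{span of I}(3)), so $h$ is in particular a bijective Kat\v{e}tov reduction — though the statement only asks for the bijection $h$.

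The main obstacle I anticipate is the bookkeeping around the open set $V$: one needs that a closed lattice-ideal $Y$ of $c_{0,\idealj}$ containing $c_0$ corresponds under $\Phi_\idealj$ to $C_0(V)$ for an open $V$ with $\N \subseteq V \subseteq U_\idealj$, and that the Banach--Stone homeomorphism then lands in $U_\ideal$ rather than merely in a compactification — this is where the hypothesis "$T[c_{0,\ideal}]$ is an ideal containing $c_0$'' is doing real work, and it must be invoked carefully (presumably via Proposition \ref{ideal2}, which I am assuming identifies closed ideals of $c_{0,\idealj}$ with sub-ideals of $\idealj$, equivalently open subsets of $U_\idealj$ containing $\N$). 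Everything after the identification of $f$ on $\N$ is routine and parallels the already-proven Theorem \ref{c_0,I caracteriza ideales}.
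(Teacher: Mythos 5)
Your argument is correct, but it is genuinely different from the one in the paper. You route everything through the $C_0(U_\ideal)$ picture: you use Proposition \ref{ideal2} to write $T[c_{0,\ideal}]=c_{0,\idealj'}$ with $\fin\subseteq\idealj'\subseteq\idealj$, identify its image under $\Phi_\idealj$ with $C_0(V)$ for the open set $V=U_{\idealj'}\supseteq\N$ (this needs the small extra observation that for ${\bf x}\in c_{0,\idealj'}$ the function $\Phi_\idealj({\bf x})$ vanishes off $U_{\idealj'}$, and that extension by zero from $C_0(V)$ is a lattice isometry onto the functions supported in $V$), and then invoke Banach--Stone to get a homeomorphism $f\colon V\to U_\ideal$ with trivial sign because $\widetilde T$ is positive; the hypothesis $c_0\subseteq T[c_{0,\ideal}]$ enters to make the isolated points of $V$ all of $\N$, so that $f$ restricts to a bijection of $\N$ and the density of $\mathrm{span}\{\chi_A:A\in\ideal\}$ finishes the identification $T=T_h$. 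The paper instead argues entirely inside the sequence spaces, with no topology and no Banach--Stone: it first shows, using only that $T[c_{0,\ideal}]$ is a lattice ideal, that $T$ sends each $\chi_{\{m\}}$ to some $\chi_{\{k\}}$ and each $\chi_A$ ($A\in\ideal$) to a characteristic function, then uses $c_0\subseteq T[c_{0,\ideal}]$ to see that every $\chi_{\{n\}}$ is hit, which makes the induced map on $\N$ a bijection. Your approach buys uniformity with Theorems \ref{c_0,I caracteriza ideales} and \ref{ab2} (it is essentially their proofs upgraded by the two extra hypotheses, which respectively kill the sign $\sigma$ and force $V\supseteq\N$); the paper's approach is more elementary and self-contained, and its first claim is reused directly in the proof of Theorem \ref{HT}. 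Both proofs are sound; just make sure, if you write yours up, to spell out the $C_0(V)$ identification and the fact that an ideal of the ideal $c_{0,\idealj}$ is an ideal of $\ell_\infty$, so that Proposition \ref{ideal2} really applies.
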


\proof
First, we claim that for every $A\in\ideal$ there is $B\in \idealj$ such that $T(\chi_A)=\chi_B$.
In fact, we show that for all $m\in \N$ there is $k\in \N$ such that $T(\chi_{\{m\}})=\chi_{\{k\}}$. Let $k\in \N$ and $a>0$ be such that
$a\chi_{\{k\}}\leq T(\chi_{\{m\}})$.
Since $T[c_{0,\ideal}]$ is an ideal, there is ${\bf y}\in c_{0,\ideal}$ such that $T({\bf y})=a\chi_{\{k\}}$. Clearly $0< {\bf y}\leq \chi_{\{m\}}$ and $\|{\bf y}\|=a$. Therefore ${\bf y}=a\chi_{\{m\}}$.  Thus $T(\chi_{\{m\}})=\chi_{\{k\}}$. 

Now let $A\in \ideal$ and put $B=\{n\in\N: T(\chi_A)(n)=1\}$. We claim  that $\chi_B= T(\chi_A)$.   Clearly $\chi_B\leq T(\chi_A)$. Since $T[c_{0,\ideal}]$ is an ideal, there is ${\bf x}\in c_{0,\mathcal I}$ such that $T({\bf x})=\chi_B$.  For each $k\in A$, let $m_k\in\mathbb N$ be such that $T(\chi_{\{k\}})=\chi_{\{m_k\}}$. Notice that $m_k\in B$ since 
$T(\chi_{\{k\}})\leq T(\chi_A)$  for all $k\in A$. Thus
$T(\chi_{\{k\}})=\chi_{\{m_k\}}\leq \chi_B= T({\bf x})$. So, 
$\chi_{\{k\}}\leq {\bf x}$ for all $k\in A$. Therefore $\chi_A\leq {\bf x}$ and hence $T(\chi_A)\leq T({\bf x})=\chi_B$ and we are done.

Next we show show that for all $n\in \N$, there is $m\in \N$ such that $T(\chi_{\{m\}})=\chi_{\{n\}}$.
In fact, let $n\in\N$. Since $\chi_{\{n\}}\in  c_{0}$, there is  ${\bf x}\in c_{0,\ideal}$ with ${\bf x}>{\bf0}$ and $T({\bf x})=\chi_{\{n\}}$. Let $m\in\mathbb N$ be such that $x_m>0$. Since $x_m\chi_{\{m\}}\leq {\bf x}$, we have $x_m T(\chi_{\{m\}})\leq \chi_{\{n\}}$. So, $m=n$ and  ${\bf x}=\chi_{\{m\}}$.

\medskip 

From the first claim, there is $f\colon\N\to\N$ such that $T(\chi_{\{m\}})=\chi_{\{f(m)\}}$. Since $T$ is an isometry, $f$ is injective. Since $c_0\subseteq T[c_{0,\ideal}]$, by (2) $f$ is onto. Let $h=f^{-1}$. From the proof of (1) it follows that $T(\chi_A)=\chi_{f(A)}$ for every $A\in \ideal$. This shows that $T=T_h$.
\endproof

%

\begin{theorem}
\label{HT}
Let $\mathcal I$ and $\mathcal J$ be ideals on $\mathbb N$ and  $T\colon c_{0,\mathcal I}\to c_{0,\mathcal J}$  be  an into Banach lattice isometry.
Suppose $T$ has the following properties:
    
\begin{enumerate}
\item for each $n\in\mathbb N$, there is $A\in\mathcal I$ satisfying $T\chi_A(n)=1$. 

\item  For all $\emptyset \neq \mathcal{F}\subseteq \ideal$  such that $\bigcap_{A\in \mathcal{F}}A=\emptyset$, we have $\bigwedge_{A\in \mathcal{F}}T(\chi_A)=0$.
\end{enumerate}

\noindent Then  $\mathcal I\leq_{K}\mathcal J$.



\end{theorem}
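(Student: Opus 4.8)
The plan is to pass through the representation of Proposition~\ref{identification c_0,I and C_0(U_I)}: the map $S:=\Phi_{\mathcal J}\circ T\circ\Phi_{\mathcal I}^{-1}\colon C_0(U_\mathcal I)\to C_0(U_\mathcal J)$ is an into Banach lattice isometry, and I will recover a Kat\v{e}tov reduction $h\colon\N\to\N$ from its behaviour on the principal ultrafilters. Fix $n\in\N$; since $\N\subseteq U_\mathcal J$, the evaluation $f\mapsto S(f)(n)$ is a positive linear lattice homomorphism $C_0(U_\mathcal I)\to\mathbb R$ of norm at most $1$. Any such functional is either $0$ or of the form $c\,\delta_p$ with $c>0$ and $p\in U_\mathcal I$ (its kernel, if proper, is a closed lattice ideal of codimension one in $C_0(U_\mathcal I)$, hence $\{f:f(p)=0\}$ for a unique $p\in U_\mathcal I$); this is the classical description of linear lattice homomorphisms on a $C_0(K)$ space. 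Condition~(1) provides $A\in\mathcal I$ with $S(\chi_{A^*})(n)=(T\chi_A)(n)=1$; as $0\le\chi_{A^*}\le1$, this rules out the zero functional and forces $c=1$ (and $p\in A^*$). So for each $n$ there is $p_n\in U_\mathcal I$ with $S(f)(n)=f(p_n)$ for all $f\in C_0(U_\mathcal I)$; equivalently, $T(\mathbf x)_n=p_n-\lim_k x_k$ for every $\mathbf x\in c_{0,\mathcal I}$, and $A\in p_n$ for the set $A$ above.

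The core of the argument is to show that each $p_n$ is a principal ultrafilter, and this is where condition~(2) enters. Assume $p_n$ is non-principal and set $\mathcal F=\mathcal I\cap p_n=\{A\in\mathcal I:A\in p_n\}$. Then $\mathcal F\ne\emptyset$ because $p_n\in U_\mathcal I$, and $\bigcap_{A\in\mathcal F}A=\emptyset$: given $m\in\N$, pick any $A_0\in\mathcal F$; non-principality of $p_n$ gives $\N\setminus\{m\}\in p_n$, so $A_0\setminus\{m\}\in\mathcal F$, whence $m\notin\bigcap\mathcal F$. By condition~(2), $\bigwedge_{A\in\mathcal F}T(\chi_A)=\mathbf 0$ in $c_{0,\mathcal J}$. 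Now comes the only delicate point: this lattice infimum in $c_{0,\mathcal J}$ need not be the pointwise infimum of the functions, but because $c_{0,\mathcal J}$ is an order-dense ideal of $\ell_\infty$ (if $\mathbf 0<\mathbf u\in\ell_\infty$, choose a coordinate $m$ with $u_m>0$ and note $\mathbf 0<u_m\chi_{\{m\}}\le\mathbf u$), one still obtains that $z_k:=\inf_{A\in\mathcal F}T(\chi_A)_k$ vanishes for every $k$: otherwise $z_k\chi_{\{k\}}$ would be a nonzero positive lower bound of $\{T(\chi_A):A\in\mathcal F\}$ inside $c_{0,\mathcal J}$, contradicting that the infimum there is $\mathbf 0$. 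But for $A\in\mathcal F$ we have $A\in p_n$, so $T(\chi_A)_n=p_n-\lim_k(\chi_A)_k=1$, whence $z_n=1$. This contradiction shows $p_n\in\N$.

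Finally, put $h(n)=p_n\in\N$. Since $p_n$ is principal, $T(\mathbf x)_n=p_n-\lim_k x_k=x_{h(n)}$, i.e.\ $T(\mathbf x)=\mathbf x\circ h$. As $T$ takes values in $c_{0,\mathcal J}$, for every $A\in\mathcal I$ we have $\chi_{h^{-1}(A)}=\chi_A\circ h=T(\chi_A)\in c_{0,\mathcal J}$, so $h^{-1}(A)\in\mathcal J$ by Proposition~\ref{span of I}(3). Hence $h$ is a Kat\v{e}tov reduction and $\mathcal I\le_K\mathcal J$. I expect the main obstacle to be exactly the infinite-infimum step: a priori the infimum in condition~(2) lives abstractly in $c_{0,\mathcal J}$ and bears no obvious relation to coordinatewise behaviour, so one must exploit order-density of $c_{0,\mathcal J}$ in $\ell_\infty$ to bring it down to a statement about individual coordinates, which is what collides with the equalities $T(\chi_A)_n=1$ for $A\in\mathcal F$; the rest is routine once the point evaluations $p_n$ have been isolated.
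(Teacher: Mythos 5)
Your proof is correct, and it reaches the conclusion by a genuinely different route from the paper. The paper follows the Holszty\'nski machinery of \cite{rincon-villamizar}: it introduces the sets $\Delta(p)=\{q\in U_{\mathcal J}\colon \widetilde T^*\delta_q(\{p\})=1\}$ and establishes, through regular-measure estimates (Claims \ref{deltap}--\ref{segundo claim}), that $\N\subseteq\Delta$ and that the induced map $\phi$ sends $\N$ into $\N$; the lattice hypothesis enters there only to force $\sigma\equiv 1$ and in the proof of Claim \ref{segundo claim}. You instead exploit the lattice hypothesis from the outset: $f\mapsto S(f)(n)$ is a real lattice homomorphism on $C_0(U_{\mathcal I})$, hence of the form $c\,\delta_{p_n}$ by the classical characterization of lattice homomorphisms on $C_0(K)$-spaces, and condition (1) pins down $c=1$ and $p_n\in U_{\mathcal I}$. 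This replaces the measure-theoretic core of the paper's argument by one standard fact and is arguably cleaner; the trade-off is that the $\Delta(p)$ approach is the one that would survive if one only had an into isometry rather than a lattice isometry, which is irrelevant here since the theorem assumes the latter. The role of condition (2) is essentially the same in both arguments: the paper shows directly that $\mathcal F_n=\{A\in\ideal\colon T\chi_A(n)=1\}$ has nonempty intersection because $\chi_{\{n\}}$ is a positive lower bound of $\{T\chi_A\colon A\in\mathcal F_n\}$, while you argue contrapositively that a non-principal $p_n$ would produce $\mathcal F=\ideal\cap p_n$ with empty intersection yet with all $T(\chi_A)$ equal to $1$ at coordinate $n$. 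Your explicit treatment of the delicate point --- that the abstract infimum in $c_{0,\idealj}$ being $0$ forces the coordinatewise infimum to vanish, via the positive lower bounds $z_k\chi_{\{k\}}$ (using that finite sets belong to $\idealj$) --- is exactly right, and is in fact the same order-density observation that the paper uses implicitly when it passes from $\chi_{\{n\}}\leq T\chi_A$ to $\bigwedge_{A\in\mathcal F_n}T\chi_A>0$.
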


\begin{proof}
Let $T\colon c_{0,\mathcal I}\to c_{0,\mathcal J}$ be an into Banach lattice isometry and define $\widetilde T\colon C_0(U_{\mathcal I})\to C_0(U_{\mathcal J})$ by $\widetilde T=\Phi_{\mathcal J}\circ T\circ\Phi_{\mathcal I}^{-1}$. Notice that $\widetilde T$ is a Banach lattice isometry and for each $n\in\mathbb N$, there is $A\in\mathcal I$ such that $\widetilde T\chi_{A^*}(n)=1$. Following a proof of Holszty\'nski theorem (as presented in \cite{rincon-villamizar})  we define,  for $p\in U_{\mathcal I}$,
\[
\begin{array}{rcl}
    \Delta(p) & = &\{q\in U_{\mathcal J}\,\colon\,\widetilde T^*\delta_{q}(\{p\})=1\},\quad\mbox{and}\\
    \\
\Delta &= & \displaystyle\bigcup_{p\in U_{\mathcal I}}\Delta(p).
\end{array}
\]
The following is a crucial ingredient, its proof can be found in  \cite{rincon-villamizar}.

\begin{claim}
\label{deltap}
The following statements are valid:
    
\begin{enumerate}
\item  $\Delta(p)\neq\emptyset$ for all $p\in U_{\mathcal I}$.
 
\item  $\Delta(p)\cap\Delta(p')=\emptyset$ if $p\neq p'$.
        
\item The map $\phi\colon\Delta\to U_{\mathcal I}$ defined by $\phi(q)=p$ iff $q\in\Delta(p)$ is continuous and onto. 
        
\item $\widetilde TF(q)=\sigma(q)F(\phi(q))$ for all $F\in C_0(U_{\mathcal I})$ and $q\in\Delta$, where $\sigma\colon\Delta\to\mathbb R$ is continuous and $|\sigma(q)|=1$ for all $q\in\Delta$. 
\end{enumerate}

\end{claim}

Since $\widetilde T$ is a Banach lattice isometry, we have $\sigma(q)=1$ for each $q\in\Delta$.

\begin{claim}\label{primer claim}
For each $p\in U_{\mathcal I}$ we have
\begin{gather*}
\Delta(p)=\{q\in U_{\mathcal J}\,\colon\,\widetilde TF(q)=1, \mbox{whenever $F\geq{\bf0}$ and $F(p)=1=\|F\|$}\}.
\end{gather*}

\end{claim}

Proof of Claim \ref{primer claim}: Suppose that $\widetilde T\delta_q(\{p\})=1$. Let $F\in C_0(U_{\mathcal I})$ be such that $F\geq{\bf0}$ and $F(p)=1=\|F\|$. Since $\chi_{\{p\}}\leq F$, we have $\widetilde TF(q)=\widetilde T^*\delta_q(F)\geq\widetilde T^*\delta_q(\chi_{\{p\}})=1$. 

For the other inclusion, let $q\in U_{\mathcal J}$ be such that $\widetilde TF(q)=1$ whenever $F\geq{\bf0}$ and $F(p)=1=\|F\|$. Also let $\mathcal V_p$ be a fundamental system of open neighborhoods of $p$. If $\varepsilon>0$ is given, there is an open $V\subset U_{\mathcal I}$ with $p\in V$ and $\widetilde T^*\delta_q(V\setminus\{p\})<\varepsilon$. For each $W\in\mathcal V_p$, let $F_W\in C_0(U_{\mathcal I})$ be such that $F_W(p)=1=\|F_W\|$, $F_W\geq{\bf0}$ and $F_W(U_{\mathcal I}\setminus W)=\{0\}$. Take $W_0\in\mathcal V_p$ with $W_0\subset V$. So, 
\begin{gather*}
    1=\widetilde TF_{W_0}(q)=\int_{U_{\mathcal I}}F_{W_0}\,d\widetilde T^*\delta_q=\int_{W_0}F_{W_0}\,d\widetilde T^*\delta_q\leq\widetilde T^*\delta_q(\{p\})+\widetilde T^*\delta_q(V\setminus\{p\})\leq1+\varepsilon.
\end{gather*}
Since $\varepsilon>0$ is arbitrary, we conclude that $\widetilde T^*\delta_q(\{p\})=1$, that is, $q\in\Delta(p)$. Thus we have proved Claim \ref{primer claim}.

\begin{claim}\label{segundo claim}
For each $p\in U_{\mathcal I}$ we have
\begin{gather*}
    \Delta(p)=\{q\in U_{\mathcal J}\,\colon\,\mbox{$\widetilde T\chi_{A^*}(q)=1$ for all $A\in\mathcal I$ with $p\in A^*$}\}.
\end{gather*}

\end{claim}

Proof of Claim \ref{segundo claim}: The inclusion $\subseteq$ is clear from Claim \ref{primer claim}. For the other direction, let $\widetilde\Delta(p)$ be the the right hand side of the identity above and $q\in \widetilde\Delta(p)$. Let $\varepsilon>0$ be given. If $F\in C_0(U_{\mathcal I})$ satisfies $F\geq{\bf0}$ and $F(p)=1=\|F\|$, then $B=\{n\in\mathbb N\,\colon\,|F(n)-1|<\varepsilon\}\in p$. Let $A\in\mathcal I\cap p$. Since $q\in\widetilde\Delta(p)$, $\widetilde T\chi_{(A\cap B)^*}(q)=1$. As  $(1-\varepsilon)\chi_{(A\cap B)^*}\leq F$ and $T$ is lattice order preserving,  we have that $1-\varepsilon\leq TF(q)$. Since $\varepsilon$ was arbitrary,   $TF(q)=1$ and $q\in \Delta(p)$. Thus we have proved Claim \ref{segundo claim}.
   
\begin{claim}\label{tercer claim}
 $\mathbb N\subset\Delta$.
\end{claim}

Proof of Claim \ref{tercer claim}: Fix $n\in\N$ and let $\mathcal F_n=\{A\in\mathcal I\,\colon\, T\chi_{A}(n)=1\}$.   Thus, from the hypothesis (1), $\mathcal F_n$ is non-empty. 
Now if $A,B\in\mathcal F_n$, then
$T\chi_{A}(n)=1$ and $T\chi_{B}(n)=1$. Since $ T\chi_{(A\cap B)}= T(\chi_{A}\wedge\chi_{B})= T(\chi_{A})\wedge T(\chi_{B})$, we conclude that $T\chi_{(A\cap B)}(n)=1$. Hence $\mathcal F_n$ has the finite intersection property. Let $p\in\beta\mathbb N$ be such that $\mathcal F_n\subset p$.  We will show that $n\in\Delta(p)$. 

 Notice that $\widetilde T\chi_{A^*}(q)=q-\lim_m T\chi_{A}(m)$, for every $q\in U_\ideal$ and $A\in \ideal$. In particular,  when $q$ is the principal ultrafilter $n$, we get $\widetilde T\chi_{A^*}(n)=T\chi_A(n)$. Thus $\mathcal F_n=\{A\in\mathcal I\,\colon\, \widetilde T\chi_{A^*}(n)=1\}$.

As $\mathcal{F}_n\subseteq p$, we have that $p\in\mathcal U_{\mathcal I}$. Let $A\in\mathcal F_n$ be fixed. To show that $n\in \Delta(p)$ it suffices to have, by Claim \ref{segundo claim}, that $\widetilde T\chi_{B^*}(n)= 1$ for all  $B\in\mathcal I$ with $p\in B^*$. Fix such $B$ and suppose,  towards a contradiction,  that $\widetilde T\chi_{B^*}(n)\neq 1$. From  Claim \ref{deltap} (4), we have $\widetilde T\chi_{B^*}(n)=\chi_{B^*}(\phi(n))$. Thus  $\widetilde T\chi_{B^*}(n)=0$. On the other hand,  
    \begin{gather*}
        1=T\chi_{A}(n)= T\chi_{(A\cap B)}(n)+T\chi_{(A\setminus B)}(n).
    \end{gather*}
As  $0\leq T\chi_{A\cap B}(n)\leq T\chi_{B}(n)$,  we conclude that $T\chi_{(A\setminus B)}(n)=1$ and $A\setminus B\in p$, which is a contradiction. Thus we have proved Claim \ref{tercer claim}.

Now we show that $\mathbb N\subseteq \phi^{-1}(\mathbb N)$  where $\phi$ is as in Claim \ref{deltap}. 
Let  $n\in \N$. Then $\mathcal F_n=\{A\in\mathcal I\,\colon\, T\chi_{A}(n)=1\}$. We claim  that  $\bigcap\{A:\; A\in \mathcal F_n\}\neq \emptyset$. In fact, we have $ \chi_{\{n\}}\leq T\chi_A$
for all $A\in \mathcal{F}_n$. Thus 
\[
0<\bigwedge_{A\in \mathcal{F}_n} T\chi_A.
\]
Thus, by the hypothesis (2), there is $m$  such that  $m\in A$ for all $A\in \mathcal F_n$. Therefore $n\in \Delta(m)$, that is, $\phi(n)=m$. 

To end the proof, we let $h:=\phi\restriction \N$. By Claim \ref{deltap}(3), $h$ is onto. We show that  $h: \N\to\mathbb N$ is a Kat\v{e}tov reduction. We have
\begin{gather}
\label{4.10}
    T\chi_A(n)=\widetilde T\chi_{A^*}(n)=\chi_{A^*}(\phi(n))=\chi_A(\phi(n))=\chi_{\phi^{-1}(A)}(n)=\chi_{h^{-1}(A)}(n)
\end{gather}
for all $A\in\mathcal I$ and $n\in \N$.
Thus, if $A\in\mathcal I$, then $\chi_{h^{-1}(A)}\in c_{0,\mathcal J}$ which means that $h^{-1}(A)\in\mathcal J$. This shows that $\ideal\leq_K\idealj$ and we are done. 
\end{proof}

%
%

We end this section commenting about the role of the condition (1) in Theorem \ref{HT}.  We show first  a simple way to construct subspaces of $c_{0,\mathcal I}$, a particular case  was already observed in \cite{Leonetti2018}.

\begin{proposition}
\label{restriction}
Let $\mathcal I$ be an ideal on $\mathbb N$ and $A\subseteq \N$. Then $c_{0,\ideal\restriction A}$ is Banach lattice isometric to a closed ideal of $c_{0,\ideal}$. 
\end{proposition}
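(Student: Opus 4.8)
The plan is to exhibit an explicit isometric embedding of $c_{0,\ideal\restriction A}$ onto a closed ideal of $c_{0,\ideal}$, using the obvious ``extension by zero'' map. Given a bounded sequence ${\bf x}=(x_n)_{n\in A}$ indexed by $A$, define $E({\bf x})\in\ell_\infty$ by $E({\bf x})(n)=x_n$ if $n\in A$ and $E({\bf x})(n)=0$ otherwise. First I would check that $E$ maps $c_{0,\ideal\restriction A}$ into $c_{0,\ideal}$: if ${\bf x}$ is $(\ideal\restriction A)$-convergent to $0$, then for each $\varepsilon>0$ the set $\{n\in A:|x_n|\geq\varepsilon\}$ lies in $\ideal\restriction A$, hence equals $A\cap B$ for some $B\in\ideal$, and since ideals are downward closed this set is in $\ideal$; as the support of $E({\bf x})$ is contained in $A$, the set $A(\varepsilon,E({\bf x}))$ coincides with $\{n\in A:|x_n|\geq\varepsilon\}\in\ideal$, so $E({\bf x})\in c_{0,\ideal}$.

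Next, $E$ is clearly linear, and it is an isometry because $\sup_{n\in\N}|E({\bf x})(n)|=\sup_{n\in A}|x_n|=\|{\bf x}\|_\infty$ (the supremum over the complement of $A$ contributes $0$). It is also a lattice homomorphism: pointwise lattice operations are preserved by the zero-extension since $E({\bf x}\wedge{\bf y})$ and $E({\bf x})\wedge E({\bf y})$ agree on $A$ and both vanish off $A$. Thus $E$ is an into Banach lattice isometry, and $Y:=E[c_{0,\ideal\restriction A}]$ is a closed sublattice of $c_{0,\ideal}$ (closed because $E$ is an isometry with closed domain, or directly by Proposition \ref{properties of A-convergence}(4) applied to the ideal $\ideal\restriction A$ together with the identification below).

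It remains to see that $Y$ is an ideal of $c_{0,\ideal}$, i.e.\ that it is order-absolutely-convex in $c_{0,\ideal}$. Observe first that $Y=\{{\bf z}\in c_{0,\ideal}:\mathrm{supp}({\bf z})\subseteq A\}$: the inclusion $\subseteq$ is immediate, and conversely if ${\bf z}\in c_{0,\ideal}$ has support inside $A$, then ${\bf z}=E({\bf z}\restriction A)$ and ${\bf z}\restriction A\in c_{0,\ideal\restriction A}$ since $A(\varepsilon,{\bf z}\restriction A)=A(\varepsilon,{\bf z})\cap A\in\ideal\restriction A$. Now suppose ${\bf z}\in Y$ and ${\bf w}\in c_{0,\ideal}$ with $|{\bf w}|\leq|{\bf z}|$. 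Then $|w_n|\leq|z_n|=0$ for every $n\notin A$, so $\mathrm{supp}({\bf w})\subseteq A$, and by the description of $Y$ we get ${\bf w}\in Y$. Hence $Y$ is a closed ideal of $c_{0,\ideal}$, and $E$ is the desired Banach lattice isometry onto it. The only slightly delicate point is bookkeeping the identification $\ideal\restriction A=\{A\cap B:B\in\ideal\}$ correctly when translating between $(\ideal\restriction A)$-convergence of sequences on $A$ and $\ideal$-convergence of zero-extended sequences on $\N$; everything else is routine.
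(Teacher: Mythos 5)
Your proof is correct and follows essentially the same route as the paper: the paper also uses the extension-by-zero map $T({\bf x})=(\chi_A(n)x_n)_{n\in\N}$ and leaves the verifications to the reader, whereas you carry them out explicitly (including the useful identification $E[c_{0,\ideal\restriction A}]=\{{\bf z}\in c_{0,\ideal}:\mathrm{supp}({\bf z})\subseteq A\}$, which is exactly what makes the ideal property immediate). No gaps.
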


\begin{proof} 
Recall that $\ideal\restriction A$ is an ideal on $A$ and thus $c_{0,\ideal\restriction A}$ consists of sequences on $A$. For each ${\bf x}=(x_n)_{n\in A}\in c_{0,\ideal\restriction A}$,  let $T({\bf x}):=(\chi_A(n)x_n)_{n\in \N}$. Clearly $T({\bf x})\in c_{0,\ideal}$.  The map $\Psi$ is the required Banach lattice isometry. It is not difficult to check that $T(c_{0,\mathcal I\restriction A})$ is an ideal of $c_{0,\mathcal I}$.
\end{proof}

In the previous result suppose $\ideal$ is not $\mathcal{P}(\N)$  and $A\in\ideal$ is infinite, then $\ideal\restriction A=\mathcal{P}(A)$ and thus $\ideal\restriction A\not\leq_K \ideal$. In this case condition (1) in Theorem \ref{HT} fails because  $T(\chi_B)(n)=0$ for all $B$ and any $n\not\in A$.

\section{Some lattices properties of $c_{0,\ideal}$}
\label{latticesProp}

In this section we present some properties of $c_{0,\ideal}$ as an ideal of the  Banach lattice $\ell_\infty$.
First,  we characterize the closed ideals of $\ell_\infty$. Part (iii)  below says that it is somewhat harmless to assumed that every closed ideal of $\ell_\infty$ contains $c_0$. 
The  particular case of (i)  below for a maximal ideal was shown in \cite[Theorem 4.1]{k-m-s}.

\begin{proposition}
\label{ideal2}
Let $Y$ be a closed sublattice of $\ell_\infty$. Then

\begin{itemize}
\item[(i)]$Y$ is an ideal iff there is an ideal $\ideal$ (not containing necesarely $\fin$) on $\N$ such that $Y=c_{0,\ideal}$.

\item[(ii)] $c_0\subseteq c_{0, \ideal}$ iff $\fin\subseteq \ideal$.

\item[(iii)] For each $A\subseteq \N$, let $Z_A=\{{\bf x}\in \ell_\infty: x_n=0\;\mbox{for all $n\in A$}\}$.   For each closed ideal $Y$ of $\ell_\infty$, let $A=\{n\in \N: \chi_{\{n\}}\not\in Y\}$, then  $Y=\langle Y\cup c_0\rangle\cap Z_A$.

\end{itemize} 

\end{proposition}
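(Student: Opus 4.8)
The plan is to prove the three items in order, each time exploiting the description of $c_{0,\ideal}$ via characteristic functions (Proposition \ref{span of I}) and the fact that $\ell_\infty$ is lattice-isometric to $C(\beta\N)$.

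First, for (i), one direction is immediate from Proposition \ref{properties of A-convergence}(5): $c_{0,\ideal}$ is always a closed ideal of $\ell_\infty$ (and the caveat about not necessarily containing $\fin$ is handled because a collection closed under subsets and finite unions with $\emptyset$ is still called an ideal here). For the converse, suppose $Y$ is a closed ideal of $\ell_\infty$. Set $\ideal = \{A\subseteq \N : \chi_A\in Y\}$. I would check $\ideal$ is an ideal on $\N$: if $\chi_A\in Y$ and $B\subseteq A$ then $\chi_B = \chi_B\wedge\chi_A\le\chi_A$ so $\chi_B\in Y$ since $Y$ is an ideal; and $\chi_{A\cup B} = \chi_A\vee\chi_B = \chi_A+\chi_B-\chi_A\wedge\chi_B\in Y$ since $Y$ is a sublattice and linear subspace. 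Then $c_{0,\ideal} = \overline{\mathrm{span}}\{\chi_A : A\in\ideal\}\subseteq Y$ because $Y$ is closed and linear. For the reverse inclusion $Y\subseteq c_{0,\ideal}$: take ${\bf x}\in Y$, ${\bf x}\ge 0$ (splitting into positive and negative parts, which lie in $Y$ since it is a sublattice); for $\varepsilon>0$ the set $A=A(\varepsilon,{\bf x})$ satisfies $\varepsilon\chi_A\le {\bf x}$, hence $\chi_A\in Y$ (ideal property, up to the scalar), so $A\in\ideal$; since $\varepsilon$ is arbitrary this gives ${\bf x}\in c_{0,\ideal}$, and a general ${\bf x}\in Y$ decomposes as ${\bf x}^+-{\bf x}^-$.

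For (ii): $c_0 = c_{0,\fin}$, and $c_0\subseteq c_{0,\ideal}$ iff $\chi_{\{n\}}\in c_{0,\ideal}$ for all $n$ iff (by Proposition \ref{span of I}(3)) $\{n\}\in\ideal$ for all $n$, which — since $\ideal$ is closed under finite unions — is equivalent to $\fin\subseteq\ideal$.

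For (iii): Let $Y$ be a closed ideal of $\ell_\infty$, $A=\{n\in\N:\chi_{\{n\}}\notin Y\}$. By (i), $Y=c_{0,\idealj}$ for the ideal $\idealj=\{B:\chi_B\in Y\}$, and $A$ is exactly $\{n:\{n\}\notin\idealj\}$; note no element of $\idealj$ meets $A$ in more than finitely many points (if $B\in\idealj$ and $n\in B\cap A$ then $\{n\}\subseteq B$ forces $\chi_{\{n\}}\in Y$). Hence every ${\bf x}\in Y$ vanishes on $A$ in the $\ideal$-limit sense; more precisely one checks $Y\subseteq Z_A$: if ${\bf x}\in Y$, ${\bf x}\ge 0$, and $n\in A$, then $x_n\chi_{\{n\}}\le{\bf x}$, so $x_n\chi_{\{n\}}\in Y$, forcing $x_n=0$ by definition of $A$. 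Clearly $Y\subseteq\langle Y\cup c_0\rangle$, giving $Y\subseteq\langle Y\cup c_0\rangle\cap Z_A$. For the reverse inclusion, take ${\bf x}\in\langle Y\cup c_0\rangle\cap Z_A$ with ${\bf x}\ge 0$; the closed ideal generated by $Y\cup c_0$ consists of the closure of elements dominated by a sum $|{\bf y}|+|{\bf z}|$ with ${\bf y}\in Y$, ${\bf z}\in c_0$. Given $\varepsilon>0$, approximate ${\bf x}$ within $\varepsilon$ by such a dominated element; the part of ${\bf x}$ of size $\ge\varepsilon$ is then supported on a set $B$ which, modulo a finite set coming from the $c_0$-piece, lies in $\idealj$, but the finite part is disjoint from $A$ (since ${\bf x}\in Z_A$) hence consists of singletons in $\idealj$; so $\varepsilon\chi_{B}\lesssim{\bf x}$ forces $B\in\idealj$, i.e.\ $\chi_B\in Y$, and letting $\varepsilon\to 0$ gives ${\bf x}\in Y$.

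The main obstacle is the reverse inclusion in (iii): one must carefully handle the interaction between the $c_0$-summand (which can add finitely supported "noise" on $A$) and the requirement ${\bf x}\in Z_A$, showing that the noise is killed precisely because ${\bf x}$ vanishes on $A$, so that the relevant support sets genuinely land in $\idealj$. Making the approximation-and-truncation argument precise — choosing the right level sets and verifying the domination estimates — is where the real work lies; items (i) and (ii) are essentially bookkeeping with the characteristic-function description.
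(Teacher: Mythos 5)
Your argument is correct and follows the paper's proof in all essentials: the same ideal $\ideal=\{A:\chi_A\in Y\}$ in (i), the same singleton argument for $Y\subseteq Z_A$ in (iii), and the same "finite noise off $A$ consists of singletons already in $\ideal$" observation to close the reverse inclusion. The only real divergence is in the step you correctly identify as the crux of (iii): you unpack the closed ideal generated by $Y\cup c_0$ by hand (elements dominated by $|{\bf y}|+|{\bf z}|$ with ${\bf y}\in Y$, ${\bf z}\in c_0$, plus an $\varepsilon$-approximation), whereas the paper simply invokes Theorem \ref{union of ideals} to get the exact identification $\langle Y\cup c_0\rangle=\langle c_{0,\ideal}+c_{0,\fin}\rangle=c_{0,\ideal\sqcup\fin}$; with that in hand, your truncation argument collapses to the one-line observation that $A(\varepsilon,{\bf x})=B\cup C$ with $B\in\ideal$ and $C$ finite and disjoint from $A$, hence $C\in\ideal$ and $A(\varepsilon,{\bf x})\in\ideal$. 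Your route works and needs no new lemma, but it does rely on the (standard, unproved here) description of generated closed ideals and leaves the domination estimates to be written out; the paper's route buys exactness and brevity at the cost of quoting a theorem proved later in the text. Your direct verification in (i) that $A(\varepsilon,{\bf x})\in\ideal$ via $\varepsilon\chi_{A(\varepsilon,{\bf x})}\leq{\bf x}^{+}$ is if anything slightly cleaner than the paper's appeal to the approximation argument of Proposition \ref{span of I}(2).
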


\proof
(i) We have already seen that every $c_{0,\mathcal I}$ is an ideal of $\ell_\infty$. Conversely, if $Y$ is an ideal of $\ell_\infty$, let $\ideal=\{A\subseteq \N: \; \chi_A\in Y\}$. It is easy to verify that $\ideal$ is an ideal. From  Proposition \ref{span of I}, it follows that $c_{0,\ideal}\subseteq Y$. For the other inclusion, fix ${\bf x}\in Y$ and $\varepsilon>0$.   By an argument analogous to that used in the proof of Proposition \ref{span of I}(2), there is ${\bf y}\in \mathrm{span}\{\chi_A\,\colon\,A\in\mathcal I\}$ such that $\|{\bf x}-{\bf y} \|<\varepsilon$. 

(ii) is straightforward. (iii) Firstly, we prove that $Y\subseteq Z_A$. Assume that $Y\setminus Z_A\neq\emptyset$ and let ${\bf z}=(z_n)\in Y\setminus Z_A$. So, there is $m\in A$ such that $z_m\neq0$. Since $|z_m|\chi_{\{m\}}\leq|{\bf z}|$ and $Y$ is an ideal, $z_m\chi_{\{m\}}\in Y$. Hence, $\chi_{\{m\}}\in Y$ which is impossible. On the other hand, it is clear that  $Y\subseteq\langle Y\cup c_0\rangle$. For the other inclusion, let $\mathcal I$ be the ideal given by (i), i.e, such that $c_{0,\mathcal I}=Y$. By Theorem \ref{union of ideals} below we have
\begin{gather*}
    \langle Y\cup c_0\rangle=\langle Y+c_0\rangle=\langle c_{0,\mathcal I}+c_{0,\fin}\rangle=\langle c_{0,\mathcal I\sqcup\fin}\rangle=c_{0,\mathcal I\sqcup\fin}.
\end{gather*}
Now, if ${\bf x}\in\langle Y\cup c_0\rangle\cap Z_A$, then ${\bf x}\in c_{0,\mathcal I\sqcup\fin}$. Let $\varepsilon>0$ be given. Observe that $A({\bf x},\varepsilon)\cap A=\emptyset$. If $C\in\fin$ and $B\in\mathcal I$ satisfy $A({\bf x},\varepsilon)=B\cup C$, we have $C\in\mathcal I$ since $C\cap A=\emptyset$. So, $A({\bf x},\varepsilon)\in\mathcal I$. Whence, ${\bf x}\in c_{0,\mathcal I}=Y$.
\endproof

\begin{definition}
 \begin{enumerate}

\item  Two elements $x,y\in\ell_\infty$ are called \textit{$c_0$-disjoint}, and we write $x\perp_{c_0} y$, if $|x|\wedge|y|\in c_0$.

\item Two subspaces $S$ and $T$ of $\ell_\infty$ are called {\em $c_0$-disjoint} if $s\perp_{c_0} t$ for each $s\in S$ and $t\in T$.

\item The {\em $c_0$-disjoint complement} of a subspace $Y$ of $\ell_\infty$, denoted by $c_0[Y]$, is
\[
c_0[Y]=\{x\in \ell_\infty\,\colon\,\mbox{$x\perp_{c_0} y$ for all $y\in Y$}\}=\{x\in \ell_\infty\colon\;\mbox{$|x|\wedge|y|\in c_0$ for all $y\in Y$}\}.
\]
\end{enumerate} 
\end{definition}

\begin{theorem}
Let $\ideal$ and $\idealj$ be two ideals over $\N$. 
Then 

\begin{itemize}
\item[(i)] $c_{0,\ideal}$ and $c_{0,\idealj}$ are $c_0$-disjoint iff $\ideal\subseteq \idealj^\perp$.  In particular, $c_{0,\ideal}$ and $c_{0, \ideal^\perp}$ are $c_0$-disjoint for every ideal $\ideal$.

\item[(ii)] $c_{0,\ideal^\perp}= c_0[c_{0,\ideal}]$.

\item[(iii)] $\ideal$ is Fr\'echet iff $c_0[c_0[c_{0,\ideal}]]=c_{0,\ideal}$. 

\end{itemize}
\end{theorem}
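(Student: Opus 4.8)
The plan is to establish each of (i), (ii), (iii) in turn, with (i) and (ii) being the workhorses and (iii) following formally from (ii) plus the definition of Fr\'echet ideal.

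For (i), I would unwind both sides in terms of supports. Given $A\in\ideal$ and $B\in\idealj$, I claim $\chi_A\perp_{c_0}\chi_B$ iff $A\cap B$ is finite: indeed $|\chi_A|\wedge|\chi_B|=\chi_{A\cap B}$, and by Proposition \ref{span of I}(3) applied to the ideal $\fin$ (or directly), $\chi_{A\cap B}\in c_0$ iff $A\cap B$ is finite. So if $\ideal\subseteq\idealj^\perp$, then every such pair is $c_0$-disjoint; to get full $c_0$-disjointness of the subspaces I would take arbitrary ${\bf x}\in c_{0,\ideal}$ and ${\bf y}\in c_{0,\idealj}$ and show $|{\bf x}|\wedge|{\bf y}|\in c_0$. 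For this, fix $\varepsilon>0$ and note $A(\varepsilon,|{\bf x}|\wedge|{\bf y}|)=A(\varepsilon,{\bf x})\cap A(\varepsilon,{\bf y})$, which is the intersection of a set in $\ideal$ with a set in $\idealj$, hence finite by hypothesis; so $|{\bf x}|\wedge|{\bf y}|$ is genuinely null. For the converse, if $c_{0,\ideal}$ and $c_{0,\idealj}$ are $c_0$-disjoint then in particular $\chi_A\perp_{c_0}\chi_B$ for all $A\in\ideal$, $B\in\idealj$ (using Proposition \ref{span of I}(3)), so $A\cap B$ is finite, i.e. $\ideal\subseteq\idealj^\perp$. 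The ``in particular'' clause is immediate since $\ideal\subseteq\ideal^{\perp\perp}$ always, hence $\ideal\subseteq(\ideal^\perp)^\perp$; wait, more directly: $\ideal\subseteq(\ideal^\perp)^\perp$ is exactly $\ideal\subseteq\idealj^\perp$ with $\idealj=\ideal^\perp$, which holds by the elementary fact that $\mathcal A\subseteq\mathcal A^{\perp\perp}$ for any family $\mathcal A$.

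For (ii), I need to show $c_{0,\ideal^\perp}=c_0[c_{0,\ideal}]$. The inclusion $\subseteq$ follows from (i): $\ideal^\perp\subseteq\ideal^\perp$ trivially, so $c_{0,\ideal^\perp}$ and $c_{0,\ideal}$ are $c_0$-disjoint, i.e. $c_{0,\ideal^\perp}\subseteq c_0[c_{0,\ideal}]$. For $\supseteq$, take ${\bf x}\in c_0[c_{0,\ideal}]$; I must show $\ideal\text{-}\lim x_n=0$ with respect to $\ideal^\perp$, i.e. $A(\varepsilon,{\bf x})\in\ideal^\perp$ for every $\varepsilon>0$. Fix $\varepsilon>0$ and let $B=A(\varepsilon,{\bf x})$. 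For any $A\in\ideal$ we have $\chi_A\in c_{0,\ideal}$, so $|{\bf x}|\wedge\chi_A\in c_0$; but $|{\bf x}|\wedge\chi_A\geq\varepsilon\chi_{A\cap B}$ pointwise, so $\chi_{A\cap B}\in c_0$, hence $A\cap B$ is finite. Since this holds for all $A\in\ideal$, $B\in\ideal^\perp$, as required. (One should also note $c_0[c_{0,\ideal}]$ is closed and a sublattice — in fact it is clearly closed under the lattice operations and under scalar multiplication and addition since the set of null sequences is a closed ideal of $\ell_\infty$ — so membership is well-behaved, but this is not strictly needed for the set equality just proved.)

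For (iii), apply (ii) twice: $c_0[c_0[c_{0,\ideal}]]=c_0[c_{0,\ideal^\perp}]=c_{0,(\ideal^\perp)^\perp}=c_{0,\ideal^{\perp\perp}}$. Then $c_0[c_0[c_{0,\ideal}]]=c_{0,\ideal}$ becomes $c_{0,\ideal^{\perp\perp}}=c_{0,\ideal}$, which by Proposition \ref{span of I}(3) (characterizing the ideal from its $c_{0,\cdot}$ space: $A\in\ideal$ iff $\chi_A\in c_{0,\ideal}$) is equivalent to $\ideal^{\perp\perp}=\ideal$, i.e. $\ideal$ is Fr\'echet. The main obstacle, such as it is, is being careful in (ii) that an arbitrary element of $c_0[c_{0,\ideal}]$ — not just a characteristic function — forces $A(\varepsilon,{\bf x})\cap A$ finite for all $A\in\ideal$; the pointwise inequality $|{\bf x}|\wedge\chi_A\geq\varepsilon\chi_{A\cap B}$ together with Proposition \ref{properties of A-convergence}(3) handles this cleanly, so there is no real difficulty, only bookkeeping.
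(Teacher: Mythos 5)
Your proof is correct and follows essentially the same route as the paper's: the identity $A(\varepsilon,|{\bf x}|\wedge|{\bf y}|)=A(\varepsilon,{\bf x})\cap A(\varepsilon,{\bf y})$ for (i), the inequality $\varepsilon\chi_{A\cap B}\leq|{\bf x}|\wedge\chi_A$ for the reverse inclusion in (ii), and a double application of (ii) together with Proposition \ref{span of I}(3) for (iii). The only cosmetic point is that the pointwise inequality in (ii) needs $\varepsilon\leq 1$ (the paper arranges this by taking $1>\varepsilon>0$), which is harmless since it suffices to verify $A(\varepsilon,{\bf x})\in\ideal^{\perp}$ for small $\varepsilon$.
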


\begin{proof}
\begin{itemize}
\item[(i)] Suppose $\ideal\subseteq \idealj^\perp$ and let ${\bf x}=(x_n)\in c_{0,\ideal}$ and ${\bf y}=(y_n)\in c_{0,\idealj}$.  Let $\varepsilon>0$. Then $A(\varepsilon,{\bf x})=\{n
\in \N: |x_n|\geq \varepsilon\}\in \ideal$. Thus $A(\varepsilon,{\bf x})\in \idealj^\perp$. On the other hand, $A(\varepsilon,{\bf y})=\{n \in \N: |y_n|\geq \varepsilon\}\in \idealj$. Thus 
$A(\varepsilon,{\bf x})\cap A(\varepsilon,{\bf y})$ is finite. Thus there is $n_0$ such that for all $n>n_0$, $|x_n|<\varepsilon$ or $|y_n|<\varepsilon$. Thus $\min\{|x_n|, |y_n|\}<\varepsilon$ for all $n>n_0$. Hence  $|{\bf x}|\wedge  |{\bf y}|\in c_0$. 

Conversely, suppose $\ideal\not\subseteq \idealj^\perp$. Let $A\in \ideal\setminus\idealj^\perp$. Then there is $B\subseteq A$ infinite such that $B\in \idealj\cap \ideal$. Hence $\chi_B\in c_{0,\ideal}\cap c_{0,\idealj}$ and clearly $\chi_B\not\in c_0$ as $B$ is infinite.

For the second claim, we recall that $\ideal\subseteq \ideal^{\perp\perp}$ for every ideal $\ideal$.

\item[(ii)]  The inclusion $\subseteq$ follows from (i). Conversely, suppose ${\bf x}=(x_n)\not\in c_{0,\ideal^\perp}$ and let $1>\varepsilon>0$ be such that $A(\varepsilon,{\bf x}) \not\in \ideal^\perp$. Thus there is $B\in \ideal$ such that $C:= A(\varepsilon,{\bf x}) \cap B$ is infinite.
Then $\chi_C\in c_{0,\ideal}$ and $\varepsilon\chi_C\leq |{\bf x}|\wedge \chi_C$. Thus $|{\bf x}|\wedge \chi_C\not\in c_0$. 

\item[(iii)] Suppose $\ideal$ is Fr\'echet. By (ii), we have 
\[
c_0[c_0[c_{0,\ideal}]]=c_0[c_{0,\ideal^\perp}]=c_{0,\ideal^{\perp\perp}}=c_{0,\ideal}.
\]
Conversely, to see that $\ideal$ is Fr\'echet it suffices to show that $\ideal^{\perp\perp}
\subseteq \ideal$. Let $B\in \ideal^{\perp\perp}$. By (ii) and our assumption, $\chi_B\in c_{0,\ideal^{\perp\perp}}=c_0[c_{0,\ideal^\perp}]=c_0[c_0[c_{0,\ideal}]]=c_{0,\ideal}$.  From Proposition \ref{span of I}, we conclude $B\in \ideal$.\qedhere
\end{itemize}
\end{proof}

Recall that an ideal $\mathcal I$ on $\mathbb N$ is \textit{tall} if every infinite subset of $\mathbb N$ contains a infinite member of $\mathcal I$.  A subset $A$ of a Banach lattice $E$ is called \textit{order dense in $B$} if for each ${\bf0}\neq x\in B$ there exists $a\in A$ such that ${\bf0}<|a|\leq|x|$. Observe that $c_{0, \ideal}$ is order dense in $\ell_\infty$ for any ideal $\ideal$ (containing $\fin$).

\begin{theorem}
Let $\mathcal I$ be an ideal on $\N$. The following statements are equivalent:
\begin{enumerate}
    \item $\mathcal I$ is tall;
    \item $c_{0,\mathcal I}\setminus c_0$ is order dense in $\ell_\infty\setminus c_{0}$;
    \item $c_{0,\ideal^{\perp}}=c_0$.
\end{enumerate}
\end{theorem}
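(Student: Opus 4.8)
The plan is to establish $(1)\Leftrightarrow(3)$ and $(1)\Leftrightarrow(2)$, in both cases by unwinding the definitions and passing between a bounded sequence and its support through Proposition~\ref{span of I}. (The degenerate case $\ideal=\fin$ is trivial: then $\ideal$ is not tall, $c_{0,\ideal}\setminus c_0=\emptyset$ is not order dense in the nonempty set $\ell_\infty\setminus c_0$, and $c_{0,\ideal^\perp}=c_{0,\mathcal P(\N)}=\ell_\infty\neq c_0$; so all three statements fail. Assume henceforth $\ideal\neq\fin$, so that $\ideal^\perp$ is a proper ideal containing $\fin$.)

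First I would record the combinatorial content of $(3)$. Since $\fin\subseteq\ideal^\perp$, one always has $c_0=c_{0,\fin}\subseteq c_{0,\ideal^\perp}$, and by Proposition~\ref{span of I}(2)--(3) we have $c_{0,\ideal^\perp}=\overline{\mathrm{span}\{\chi_A\colon A\in\ideal^\perp\}}$ and $\chi_A\in c_{0,\ideal^\perp}\iff A\in\ideal^\perp$. Hence $c_{0,\ideal^\perp}=c_0$ holds precisely when $\ideal^\perp$ contains no infinite set (if $A\in\ideal^\perp$ is infinite, then $\chi_A\in c_{0,\ideal^\perp}\setminus c_0$; conversely if $\ideal^\perp=\fin$ the closed span in question is exactly $c_0$). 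Next, $(1)\Leftrightarrow(3)$ becomes a one-line check: $\ideal$ fails to be tall iff there is an infinite $B\subseteq\N$ no infinite subset of which lies in $\ideal$; since $A\cap B\in\ideal$ and $A\cap B\subseteq B$ for every $A\in\ideal$, this is the same as saying $A\cap B$ is finite for all $A\in\ideal$, i.e.\ $B\in\ideal^\perp$. So $\ideal$ is not tall exactly when $\ideal^\perp$ has an infinite member, which by the previous step is the negation of $(3)$.

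For $(1)\Rightarrow(2)$, given $\mathbf{x}=(x_n)\in\ell_\infty\setminus c_0$ I would put $\delta=\limsup_n|x_n|>0$, note that $B=\{n\colon|x_n|\geq\delta/2\}$ is infinite, invoke tallness to choose an infinite $A\in\ideal$ with $A\subseteq B$, and verify that $\mathbf{y}=(\delta/2)\chi_A$ does the job: $\mathrm{supp}(\mathbf{y})=A\in\ideal$ so $\mathbf{y}\in c_{0,\ideal}$; $A$ is infinite so $\mathbf{y}\notin c_0$; and $\mathbf{0}<|\mathbf{y}|\leq|\mathbf{x}|$ because $|y_n|=\delta/2\leq|x_n|$ for $n\in A\subseteq B$ and $y_n=0$ otherwise. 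For $(2)\Rightarrow(1)$, given an infinite $B\subseteq\N$ I would apply order density to $\chi_B\in\ell_\infty\setminus c_0$, obtaining $\mathbf{a}=(a_n)\in c_{0,\ideal}\setminus c_0$ with $\mathbf{0}<|\mathbf{a}|\leq\chi_B$, so that $\mathrm{supp}(\mathbf{a})\subseteq B$; since $\mathbf{a}\notin c_0$ there is $\varepsilon>0$ with $A(\varepsilon,\mathbf{a})$ infinite, and $\mathbf{a}\in c_{0,\ideal}$ forces $A(\varepsilon,\mathbf{a})\in\ideal$, while $A(\varepsilon,\mathbf{a})\subseteq\mathrm{supp}(\mathbf{a})\subseteq B$ --- hence $B$ contains an infinite member of $\ideal$, so $\ideal$ is tall.

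I do not anticipate a genuine obstacle: the argument is definition-chasing plus the support/sequence dictionary of Proposition~\ref{span of I}. The few places needing mild care are the preliminary reduction of $(3)$ to ``$\ideal^\perp=\fin$'' (including noting that $\ideal^\perp$ is an ideal, so that $c_{0,\ideal^\perp}$ is defined), working at the level $\delta/2$ rather than $\delta$ in $(1)\Rightarrow(2)$ so that the relevant level set is genuinely infinite, and --- in $(2)\Rightarrow(1)$ --- extracting a single infinite level set $A(\varepsilon,\mathbf{a})$ of $\mathbf{a}$ rather than the whole support $\mathrm{supp}(\mathbf{a})$, which need not belong to $\ideal$.
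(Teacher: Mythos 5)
Your proof is correct and uses essentially the same ingredients as the paper's: the identification of tallness with $\ideal^\perp=\fin$ (hence with $c_{0,\ideal^\perp}=c_0$, via Proposition~\ref{span of I}), and the level-set arguments $A(\varepsilon,\mathbf{x})$ combined with tallness/order density. The only difference is organizational --- you prove the two biconditionals $(1)\Leftrightarrow(3)$ and $(1)\Leftrightarrow(2)$ separately (so you supply a direct $(2)\Rightarrow(1)$), whereas the paper runs the cycle $(1)\Rightarrow(2)\Rightarrow(3)\Rightarrow(1)$; both are sound and of comparable length.
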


\begin{proof}
(1) $\Rightarrow$ (2): Suppose that $\mathcal I$ is tall and let ${\bf0}\neq{\bf x}=(x_n)\in\ell_\infty\setminus c_{0}$ be given. For some $\varepsilon_0>0$ we have $A(\varepsilon_0,{\bf x})$ is infinite. The tallness of $\mathcal I$ implies that there exists $B\in\mathcal I$ infinite satisfying $B\subset A(\varepsilon_0,{\bf x})$. So, ${\bf y}=\chi_B\cdot{\bf x}\in c_{0,\mathcal I}\setminus c_0$ and ${\bf0}<|{\bf y}|\leq|{\bf x}|$.

(2) $\Rightarrow$ (3): Suppose there is ${\bf x}\in c_{0,\mathcal I^\perp}$ such that  ${\bf x}\not\in c_0$. Since $c_{0,\mathcal I}\setminus c_0$ is order dense in $\ell_\infty\setminus c_{0}$, there exists ${\bf y}\in c_{0,\mathcal I}\setminus c_0$ satisfying $0<|{\bf y}|\leq|{\bf x}|$.
Thus, $A(\varepsilon,{\bf y})\subset A(\varepsilon,{\bf x})$ for each $\varepsilon>0$. There is  $\varepsilon_0>0$ with $A(\varepsilon_0,{\bf x})$ infinite. Since $A(\varepsilon_0,{\bf x})\in \ideal^\perp$ and $A(\varepsilon,{\bf y})\in \ideal$,  it follows that $A(\varepsilon,{\bf y})$ is finite for each $0<\varepsilon<\varepsilon_0$ which contradicts that ${\bf y}\not\in c_0$. So, $c_{0,\mathcal I^\perp}=c_0$.

(3) $\Rightarrow$ (1): The equality $c_{0,\mathcal I^\perp}=c_0$ means that $\mathcal I^\perp=\sf{Fin}$. Thus, $\mathcal I$ is tall.
\end{proof}

\section{Banach spaces isomorphic to $c_{0,\ideal}(X)$}

In this section we show that, for some ideals, $c_{0,\ideal}$ is isomorphic to a known classical Banach space. We do it for ideals of the form $\mathcal I\sqcup\mathcal J$,
$\bigoplus\limits_{n\in\mathbb N} \ideal_n$, $\ideal^{\omega\perp}$ and the Fubini product $\ideal\times\idealj$.

\begin{theorem}
\label{union of ideals}
Let $X$ be a Banach space. If $\mathcal I$ and $\mathcal J$ are ideals on $\N$, then $c_{0,\mathcal I\sqcup\mathcal J}(X)=c_{0,\mathcal I}(X)+c_{0,\mathcal J}(X)$.
\end{theorem}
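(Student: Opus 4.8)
The plan is to prove the two inclusions separately. The inclusion $\supseteq$ is immediate, and all the content is in $\subseteq$, where an arbitrary element of $c_{0,\mathcal I\sqcup\mathcal J}(X)$ must be split into a sum.

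For $\supseteq$ I would argue as follows: given ${\bf x}\in c_{0,\mathcal I}(X)$ and ${\bf y}\in c_{0,\mathcal J}(X)$, for every $\varepsilon>0$ one has $A(\varepsilon,{\bf x}+{\bf y})\subseteq A(\varepsilon/2,{\bf x})\cup A(\varepsilon/2,{\bf y})$, which is a union of a member of $\mathcal I$ and a member of $\mathcal J$, hence lies in $\mathcal I\sqcup\mathcal J$; since $A(\varepsilon,{\bf x}+{\bf y})$ is a subset of such a set, it belongs to $\mathcal I\sqcup\mathcal J$, so ${\bf x}+{\bf y}\in c_{0,\mathcal I\sqcup\mathcal J}(X)$.

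For $\subseteq$ I would produce an explicit decomposition. Fix ${\bf z}=(z_n)\in c_{0,\mathcal I\sqcup\mathcal J}(X)$, and stratify the support by level sets: set $E_1=A(1,{\bf z})$ and $E_k=A(1/k,{\bf z})\setminus A(1/(k-1),{\bf z})$ for $k\ge 2$, so that the $E_k$ are pairwise disjoint, $\bigcup_{j\le k}E_j=A(1/k,{\bf z})$, and $\bigcup_k E_k=\mathrm{supp}({\bf z})$. For each $k$, since $A(1/k,{\bf z})\in\mathcal I\sqcup\mathcal J$, choose $C_k\in\mathcal I$ and $D_k\in\mathcal J$ with $A(1/k,{\bf z})=C_k\cup D_k$, and note $E_k\setminus C_k\subseteq D_k$. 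Put $P=\bigcup_k(E_k\cap C_k)$, $Q=\mathrm{supp}({\bf z})\setminus P$, and define ${\bf x}=\chi_P\cdot{\bf z}$ and ${\bf y}=\chi_Q\cdot{\bf z}$. Then ${\bf x}+{\bf y}={\bf z}$, both are in $\ell_\infty(X)$, and by disjointness of the $E_k$ one has $P\cap E_k=E_k\cap C_k$ and $Q\cap E_k=E_k\setminus C_k\subseteq D_k$. Given $\varepsilon>0$, pick $k_0$ with $1/k_0\le\varepsilon$, so that $A(\varepsilon,{\bf z})\subseteq A(1/k_0,{\bf z})=\bigcup_{k\le k_0}E_k$; hence $A(\varepsilon,{\bf x})=P\cap A(\varepsilon,{\bf z})\subseteq\bigcup_{k\le k_0}(E_k\cap C_k)\subseteq\bigcup_{k\le k_0}C_k\in\mathcal I$, and likewise $A(\varepsilon,{\bf y})\subseteq\bigcup_{k\le k_0}D_k\in\mathcal J$. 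Thus ${\bf x}\in c_{0,\mathcal I}(X)$, ${\bf y}\in c_{0,\mathcal J}(X)$, and ${\bf z}\in c_{0,\mathcal I}(X)+c_{0,\mathcal J}(X)$.

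The only mildly delicate point is the bookkeeping that forces the "$\mathcal I$-part'' of the support of ${\bf z}$ at a fixed level $\varepsilon$ to sit inside a \emph{finite} union of the $C_k$'s (and dually for the $D_k$'s); the stratification by the disjoint level sets $E_k$ is exactly what makes only $C_1,\dots,C_{k_0}$ relevant, so that closure under finite unions of $\mathcal I$ (resp. $\mathcal J$) applies. One could instead try the route via $\overline{\{{\bf w}:\mathrm{supp}({\bf w})\in\mathcal I\sqcup\mathcal J\}}$ from Proposition \ref{span of I}, but that reduces the claim to showing $c_{0,\mathcal I}(X)+c_{0,\mathcal J}(X)$ is closed, which is no easier than the direct construction above, so I would carry out the explicit argument.
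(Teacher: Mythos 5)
Your proof is correct, and the two directions split the same way as in the paper: the inclusion $\supseteq$ is handled identically, via $A(\varepsilon,{\bf x}+{\bf y})\subseteq A(\varepsilon/2,{\bf x})\cup A(\varepsilon/2,{\bf y})$. For the inclusion $\subseteq$, however, you take a genuinely different route. The paper only decomposes sequences ${\bf z}$ with $\mathrm{supp}({\bf z})\in\mathcal I\sqcup\mathcal J$ --- there one simply writes the support as a disjoint union $A\cup B$ with $A\in\mathcal I$, $B\in\mathcal J$ and sets ${\bf x}=\chi_A\cdot{\bf z}$, ${\bf y}=\chi_B\cdot{\bf z}$ --- and then invokes Proposition \ref{span of I}(1) to pass from this dense subset to all of $c_{0,\mathcal I\sqcup\mathcal J}(X)$. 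As you observe, that last step needs either that $c_{0,\mathcal I}(X)+c_{0,\mathcal J}(X)$ is closed or a limiting argument (e.g.\ approximate ${\bf z}$ by ${\bf w}_k$ with $\mathrm{supp}({\bf w}_k)\in\mathcal I\sqcup\mathcal J$ and $\|{\bf z}-{\bf w}_k\|\le 2^{-k}$, decompose the telescoping differences with disjointly supported pieces, and sum the two coordinates as absolutely convergent series); the paper leaves this implicit. Your stratification of $\mathrm{supp}({\bf z})$ into the disjoint level sets $E_k$, with a separate decomposition $A(1/k,{\bf z})=C_k\cup D_k$ chosen at each level, produces the splitting directly for an arbitrary ${\bf z}\in c_{0,\mathcal I\sqcup\mathcal J}(X)$, and the bookkeeping correctly isolates the finitely many $C_k$ (resp.\ $D_k$) relevant at each threshold $\varepsilon$, so that closure of $\mathcal I$ (resp.\ $\mathcal J$) under finite unions suffices. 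The trade-off: the paper's argument is shorter and reuses Proposition \ref{span of I}, while yours is self-contained and sidesteps the closedness issue altogether.
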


\begin{proof}
If ${\bf x}=(x_n)\in c_{0,\mathcal I}(X)$ and ${\bf y}=(y_n)\in c_{0,\mathcal J}(X)$, then $A(\varepsilon,{\bf x+y})\subset A(\varepsilon/2,{\bf x})\cup A(\varepsilon/2,{\bf y})$ for all $\varepsilon>0$. So, $c_{0,\mathcal I}(X)+c_{0,\mathcal J}(X)\subset c_{0,\mathcal I\sqcup\mathcal J}(X)$. On the other hand, if ${\bf z}=(z_n)\in\ell_\infty(X)$ and $\mathrm{supp}({\bf z})\in\mathcal I\sqcup\mathcal J$, take $A\in\mathcal I$ and $B\in\mathcal J$ with $\mathrm{supp}({\bf z})=A\cup B$ and $A\cap B=\emptyset$. By setting ${\bf x}=(\chi_A(n)z_n)$ and ${\bf y}=(\chi_B(n)z_n)$, we have ${\bf z= x+y}$, ${\bf x}\in c_{0,\mathcal I}$ and ${\bf y}\in c_{0,\mathcal J}$. By Proposition \ref{span of I}(1) we conclude that $c_{0,\mathcal I\sqcup\mathcal J}(X)\subset c_{0,\mathcal I}(X)+c_{0,\mathcal J}(X)$.
\end{proof}

\begin{theorem}
\label{directsum}
Let $X$ be a Banach space, $\{K_n:\; n\in\mathbb N\}$ be a partition of $\mathbb N$ and $\ideal_n$ be an ideal on $K_n$ for each $n\in\mathbb N$. If  $\mathcal I=\bigoplus\limits_{n\in\mathbb N} \ideal_n$, then  $c_{0,\mathcal I}(X)$ is isometric to $\ell_\infty((c_{0,\mathcal I_n}(X))_{n\in\mathbb N})$. In particular,  $c_{0,\mathcal{J}^{\omega}}(X)$ is isometric to $\ell_\infty(c_{0,\mathcal{J}}(X))$ for any ideal $\mathcal{J}$.

\end{theorem}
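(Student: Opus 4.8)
The plan is to use the obvious ``regrouping'' map and verify that it restricts to the desired isometry. Given ${\bf x}=(x_m)_{m\in\mathbb N}\in\ell_\infty(X)$, for each $n\in\mathbb N$ write ${\bf x}^{(n)}=(x_m)_{m\in K_n}$ for the restriction of ${\bf x}$ to the block $K_n$; since $K_n$ is countable we view ${\bf x}^{(n)}$ as an element of $\ell_\infty(X)$ over the index set $K_n$, and likewise $c_{0,\ideal_n}(X)$ as a subspace of that space. Set $\Psi({\bf x})=({\bf x}^{(n)})_{n\in\mathbb N}$. The first step is to note that $\Psi$ is a linear bijection from $\ell_\infty(X)$ onto the $\ell_\infty$-sum $\ell_\infty\big((\ell_\infty(X))_{n\in\mathbb N}\big)$ which is moreover an isometry, because
\[
\|{\bf x}\|_\infty=\sup_{m\in\mathbb N}\|x_m\|=\sup_{n\in\mathbb N}\ \sup_{m\in K_n}\|x_m\|=\sup_{n\in\mathbb N}\|{\bf x}^{(n)}\|_\infty=\|\Psi({\bf x})\|.
\]
In particular $\sup_n\|{\bf x}^{(n)}\|_\infty\le\|{\bf x}\|_\infty<\infty$ automatically, so the only remaining task is to identify the $\Psi$-preimage of $\ell_\infty\big((c_{0,\ideal_n}(X))_{n\in\mathbb N}\big)$ with $c_{0,\ideal}(X)$.

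The heart of the argument is the pointwise equivalence
\[
\ideal\text{-}\lim x_m={\bf 0}\qquad\Longleftrightarrow\qquad \ideal_n\text{-}\lim_{m\in K_n}x_m={\bf 0}\ \ \text{for every }n\in\mathbb N.
\]
To see this, fix $\varepsilon>0$ and observe that, with the notation $A(\varepsilon,\cdot)$ of the paper,
\[
A(\varepsilon,{\bf x})\cap K_n=\{m\in K_n:\|x_m\|\ge\varepsilon\}=A\big(\varepsilon,{\bf x}^{(n)}\big).
\]
By the definition of $\bigoplus_{n}\ideal_n$, the set $A(\varepsilon,{\bf x})$ belongs to $\ideal$ if and only if $A(\varepsilon,{\bf x})\cap K_n\in\ideal_n$ for every $n$, that is, if and only if $A(\varepsilon,{\bf x}^{(n)})\in\ideal_n$ for every $n$. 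Quantifying this over all $\varepsilon>0$ and interchanging the two universal quantifiers yields the displayed equivalence, which together with the norm computation above gives ${\bf x}\in c_{0,\ideal}(X)$ if and only if $\Psi({\bf x})\in\ell_\infty\big((c_{0,\ideal_n}(X))_{n\in\mathbb N}\big)$. Hence $\Psi$ restricts to the required isometry.

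For the ``in particular'' statement, recall that $\ideal^{\omega}=\bigoplus_n\ideal_n'$, where each $\ideal_n'$ is an isomorphic copy of $\ideal$ on $K_n$. The argument of Proposition \ref{Th} applies verbatim with $X$ in place of $\mathbb R$ and with a bijection witnessing $\ideal_n'\cong\ideal$ in place of $h$, so each $c_{0,\ideal_n'}(X)$ is (Banach lattice) isometric to $c_{0,\ideal}(X)$. By Remark \ref{isomorphisms of l_oo(c_0)}, $\ell_\infty\big((c_{0,\ideal_n'}(X))_{n\in\mathbb N}\big)$ is then isometric to $\ell_\infty(c_{0,\ideal}(X))$, and combining this with the first part of the theorem finishes the proof. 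I do not anticipate a genuine obstacle here; the statement is essentially a bookkeeping exercise, and the only points deserving a careful line are the identification of $\ell_\infty(X)$ over a block $K_n$ with the corresponding coordinate space of the $\ell_\infty$-sum and the verification that the sup-norm of $\Psi({\bf x})$ equals the supremum of the block norms, both of which are handled by the first displayed equation.
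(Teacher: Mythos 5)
Your proposal is correct and follows essentially the same route as the paper: the same regrouping map $\Psi({\bf x})=({\bf x}^{(n)})_n$, the same norm computation, and the same observation that $A(\varepsilon,{\bf x})\cap K_n=A(\varepsilon,{\bf x}^{(n)})$ combined with the definition of $\bigoplus_n\ideal_n$ yields the membership equivalence. Your framing of surjectivity via the global bijection $\ell_\infty(X)\to\ell_\infty((\ell_\infty(X))_n)$ is a minor cosmetic difference from the paper's explicit preimage construction, but the substance is identical.
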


\begin{proof}
    Let 
\begin{align*}
    \Psi\colon c_{0,\mathcal I}(X)&\longrightarrow \ell_\infty((c_{0,\mathcal I_n}(X))_{n\in\mathbb N})\\
    (x_n)&\longmapsto((x_n)_{n\in K_m})_{m\in\mathbb N}.
\end{align*}
\begin{enumerate}
\item To see that $\Psi$ is a  well defined linear isometry, let ${\bf x}=(x_n)\in c_{0,\mathcal I}(X)$ and $\varepsilon>0$. Then 
\begin{gather*}
A(\varepsilon,{\bf x}) \in\mathcal I\quad\Longleftrightarrow\mbox{$ \{n\in K_m\,\colon\,\|x_n\|\geq\varepsilon\}\in\mathcal I_m$  for each $m\in\mathbb N$.}
\end{gather*}
Thus, $(x_n)_{n\in K_m}\in c_{0,\mathcal I_m}(X)$ for all $m\in\mathbb N$. On the other hand,
\begin{gather*}
\|\Psi({\bf x})\|=\sup_{m\in\mathbb N}\|(x_n)_{n\in K_m}\|_\infty=\sup_{m\in\mathbb N}\sup_{n\in K_m}\|x_n\|=\sup_{n\in\mathbb N}\|x_n\|=\|{\bf x}\|.
\end{gather*}

\item To see that $\Psi$ is onto,  let $\displaystyle{\bf y}=((y_n^m))_{m\in\mathbb N}\in \ell_\infty((c_{0,\mathcal I_m}(X))_{m\in\mathbb N})$ be given. Define ${\bf x}=(x_n)\in\ell_\infty(X)$ by $x_n=y_n^m$ iff $n\in K_m$. If $\varepsilon>0$ and $m\in\mathbb N$ are given, then
    \begin{gather*}
        \{n\in\mathbb N\,\colon\,\|x_n\|\geq\varepsilon\}\cap K_m= \{n\in K_m\,\colon\,\|y_n^m\|\geq\varepsilon\}\in\mathcal I_m.
    \end{gather*}
    So $ A(\varepsilon,{\bf x}) \in\mathcal I$ for each $\varepsilon>0$ i.e. ${\bf x}\in c_{0,\mathcal I}(X)$. Clearly $\Psi({\bf x})={\bf y}$.\qedhere
\end{enumerate}
\end{proof}

\begin{corollary}
Let $X$ be a Banach space.  Let $\{A,B\}$ be a partition of $\mathbb N$ and  $\mathcal I$ and $\mathcal J$ be ideals on $A$ and $B$, respectively. Then $c_{0,\mathcal I\oplus\mathcal J}(X)$ is isometric to $c_{0,\mathcal I}(X)\oplus_{\infty} c_{0,\mathcal J}(X)$.
\end{corollary}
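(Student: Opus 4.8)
The plan is to deduce this corollary directly from Theorem \ref{directsum} together with Remark \ref{isomorphisms of l_oo(c_0)} and the observation recorded just before the corollary that the $\ell_\infty$-sum of two Banach spaces is (isometrically) their $\oplus_\infty$-sum. First I would note that since $\{A,B\}$ is a partition of $\mathbb N$ into two pieces, the ideal $\mathcal I\oplus\mathcal J$ on $\mathbb N$ is exactly the direct sum $\bigoplus_{n\in\{1,2\}}\ideal_n$ associated to the two-element partition $\{K_1,K_2\}=\{A,B\}$, where $\ideal_1=\mathcal I$ and $\ideal_2=\mathcal J$. Strictly speaking Theorem \ref{directsum} is stated for a countable partition $\{K_n:\;n\in\mathbb N\}$, so I would first remark that the same proof works verbatim for any partition indexed by a finite (or, more generally, arbitrary) set $F\subseteq\mathbb N$: the map $\Psi$ sending $(x_n)_{n\in\mathbb N}$ to $((x_n)_{n\in K_m})_{m\in F}$ is still a well-defined linear surjective isometry onto $\ell_\infty((c_{0,\ideal_m}(X))_{m\in F})$, by exactly the argument given there. (Alternatively one can pad the finite partition to a countable one by adjoining empty blocks $K_n=\emptyset$ with $\ideal_n=\{\emptyset\}$, for which $c_{0,\ideal_n}(X)=\{0\}$, and invoke the stated theorem directly.)

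With that in hand, applying this two-block version of Theorem \ref{directsum} gives that $c_{0,\mathcal I\oplus\mathcal J}(X)$ is isometric to $\ell_\infty((c_{0,\mathcal I}(X),c_{0,\mathcal J}(X)))$, the $\ell_\infty$-sum of the two Banach spaces $c_{0,\mathcal I}(X)$ and $c_{0,\mathcal J}(X)$. Finally, for any two Banach spaces $E,F$ the $\ell_\infty$-sum $\ell_\infty((E,F))$ is by definition the space of pairs $(e,f)$ with norm $\max\{\|e\|,\|f\|\}$, which is precisely $E\oplus_\infty F$; hence $c_{0,\mathcal I\oplus\mathcal J}(X)$ is isometric to $c_{0,\mathcal I}(X)\oplus_\infty c_{0,\mathcal J}(X)$, as claimed.

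I do not anticipate a genuine obstacle here: the only mild point requiring care is the passage from the countable-partition statement of Theorem \ref{directsum} to a two-block partition, and this is handled either by the trivial padding argument or by observing that the proof of Theorem \ref{directsum} never uses that the index set is infinite. Everything else is the definitional identification of an $\ell_\infty$-sum of two spaces with their $\oplus_\infty$. I would therefore keep the proof to a few lines, citing Theorem \ref{directsum} and noting the identification $\ell_\infty((E,F))=E\oplus_\infty F$.

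\begin{proof}
Apply Theorem \ref{directsum} to the (two-block) partition $\{A,B\}$ of $\mathbb N$ with $\ideal_1=\mathcal I$ and $\ideal_2=\mathcal J$; its proof works verbatim for a partition indexed by the two-element set $\{1,2\}$ (alternatively, extend $\{A,B\}$ to a countable partition by adding empty blocks, for which the corresponding summand is $\{0\}$). We obtain that $c_{0,\mathcal I\oplus\mathcal J}(X)$ is isometric to $\ell_\infty((c_{0,\mathcal I}(X),c_{0,\mathcal J}(X)))$. Since the $\ell_\infty$-sum of two Banach spaces $E$ and $F$ is, by definition, the space of pairs $(e,f)$ with norm $\max\{\|e\|,\|f\|\}$, i.e. $E\oplus_\infty F$, the conclusion follows.
\end{proof}
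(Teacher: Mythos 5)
Your proof is correct and follows exactly the route the paper intends: the corollary is stated without proof as an immediate consequence of Theorem \ref{directsum}, applied to the two-block partition $\{A,B\}$, together with the identification of the $\ell_\infty$-sum of two spaces with their $\oplus_\infty$-sum. Your care about the finite index set is reasonable but essentially already covered, since the paper's definition of $\bigoplus_{n\in F}\ideal_n$ allows arbitrary $F\subseteq\mathbb N$ and the proof of Theorem \ref{directsum} does not use infinitude of the index set.
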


\begin{theorem}
\label{omegaperp}
Let $X$ be a Banach space, $\{K_n:\; n\in\mathbb N\}$ be a partition of $\mathbb N$,  $\ideal_n$ be an ideal on $K_n$ for each $n\in\mathbb N$ and   $\ideal=\bigoplus\limits_{n\in\mathbb N} \ideal_n$.  Then
$c_{0,\mathcal I^{\perp}}(X)$ is isometric to $c_0((c_{0,\mathcal I_j^\perp}(X))_{j\in\mathbb N})$. In particular,  $c_{0,\mathcal{J}^{\omega\perp}}(X)$ is isometric to $c_0(c_{0,\mathcal{J}^\perp}(X))$ for any ideal $\mathcal{J}$.
\end{theorem}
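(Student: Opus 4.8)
The plan is to compute the orthogonal ideal $\ideal^\perp$ explicitly in terms of the $\ideal_n$ and then apply the same kind of bijective identification used in Theorem \ref{directsum}. First I would observe that for a set $B\subseteq \N$, writing $B_n = B\cap K_n$, one has $B\in \ideal^\perp$ if and only if $B$ meets each $A\in\ideal$ in a finite set; since membership in $\ideal=\bigoplus_n\ideal_n$ is decided coordinatewise (a set $A$ lies in $\ideal$ iff $A\cap K_n\in\ideal_n$ for all $n$), and since one is free to choose $A$ concentrated on a single $K_n$ or spread across infinitely many of them, the condition $B\in\ideal^\perp$ unpacks into: (a) $B_n\in\ideal_n^\perp$ for every $n$, and (b) $\{n\in\N: B_n\neq\emptyset\}$ is finite, OR more precisely the ``tail'' condition forcing all but finitely many $B_n$ to be empty. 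The key point for (b) is that if infinitely many $B_n$ were nonempty, picking one point $a_n\in B_n$ for each such $n$ gives a set $A=\{a_n\}\in\ideal$ (each $A\cap K_n$ is a singleton, hence in $\ideal_n$) with $A\cap B$ infinite; conversely if only finitely many $B_n$ are nonempty and each lies in $\ideal_n^\perp$, then any $A\in\ideal$ meets $B$ in $\bigcup$ of finitely many finite sets. Hence $\ideal^\perp = \bigl\{B\subseteq\N : B\cap K_n\in\ideal_n^\perp \text{ for all }n,\ \text{and } B\cap K_n=\emptyset \text{ for all but finitely many }n\bigr\}$.

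Next I would translate this into a statement about $c_{0,\ideal^\perp}(X)$. Given ${\bf x}=(x_n)\in\ell_\infty(X)$, membership ${\bf x}\in c_{0,\ideal^\perp}(X)$ means $A(\varepsilon,{\bf x})\in\ideal^\perp$ for every $\varepsilon>0$. By the description above this says: for each $\varepsilon>0$, the set $A(\varepsilon,{\bf x})\cap K_m$ lies in $\ideal_m^\perp$ for all $m$ (equivalently the block $(x_n)_{n\in K_m}$ lies in $c_{0,\ideal_m^\perp}(X)$), and $A(\varepsilon,{\bf x})\cap K_m=\emptyset$ for all but finitely many $m$. The latter condition, quantified over all $\varepsilon>0$, is exactly $\lim_m \|(x_n)_{n\in K_m}\|_\infty = 0$, i.e.\ the sequence of blocks is a null sequence. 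So the map $\Psi\colon c_{0,\ideal^\perp}(X)\to c_0\bigl((c_{0,\ideal_j^\perp}(X))_{j\in\N}\bigr)$ sending $(x_n)\mapsto((x_n)_{n\in K_m})_{m\in\N}$ is well defined; it is linear and norm-preserving by the same $\sup$-of-$\sup$ computation as in Theorem \ref{directsum}, and it is onto because given a null sequence of blocks $((y^m_n))_m$ one reassembles ${\bf x}$ by $x_n=y^m_n$ for $n\in K_m$ and checks directly that the two conditions above hold. This gives the isometry. The ``in particular'' clause follows by taking all $\ideal_n$ isomorphic to a fixed ideal $\mathcal J$ and invoking Remark \ref{isomorphisms of l_oo(c_0)}.

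The main obstacle is establishing the explicit formula for $\ideal^\perp$ cleanly — in particular the direction showing that $B\in\ideal^\perp$ forces all but finitely many $B\cap K_n$ to vanish, where one must exhibit the ``diagonal'' set $A\in\ideal$ witnessing failure; one has to be a little careful that this $A$ really lies in $\bigoplus_n\ideal_n$ (it does, since it meets each $K_n$ in at most one point, and finite sets belong to every $\ideal_n$). Everything downstream of that formula is a routine repackaging of the argument already given for Theorem \ref{directsum}, with $\ell_\infty$-sums replaced by $c_0$-sums and the coordinatewise membership condition replaced by its orthogonal counterpart.
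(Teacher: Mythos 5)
Your proposal is correct and follows essentially the same route as the paper: identify $\ideal^{\perp}$ as the sets $B$ with $B\cap K_n\in\ideal_n^{\perp}$ for all $n$ and $B\cap K_n=\emptyset$ for all but finitely many $n$, then check that the block map ${\bf x}\mapsto((x_n)_{n\in K_m})_m$ is a well-defined surjective linear isometry onto the $c_0$-sum. The only difference is that you actually prove the description of $\ideal^{\perp}$ (via the diagonal selector, correctly using that each $\ideal_n$ contains the finite sets), whereas the paper merely records it as a known fact.
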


\begin{proof}
 Recall that 
\begin{gather*}
A\in \ideal^{\perp}\Longleftrightarrow\mbox{$(\exists N\in\N)(A\subseteq \bigcup_{i\leq N} K_i$ and $(\forall i\leq N)\; (A\cap K_i\in \ideal_{i}^{\perp}))$.}
\end{gather*}
For any ${\bf x}\in c_{0,\mathcal I^{\perp}}(X)$, we set ${\bf y}^j=(x_n)_{n\in K_j}$ for each $j\in\mathbb N$. We have the following:
\begin{enumerate}
\item ${\bf y}^j\in c_{0,\ideal_j^\perp}(X)$ for all $j\in\mathbb N$. In fact, let  $j\in\mathbb N$  and $\delta>0$ be given.  There is $N\in\mathbb N$ such that $A(\delta,{\bf x})\subset K_1\cup\cdots\cup K_N$ and 
$A(\delta,{\bf x})\cap K_i\in\mathcal I_i^{\perp}$ for all $i\in\{1,\ldots,N\}$. Observe that $A(\delta,{\bf x})\cap K_j=A(\delta,{\bf y}^j)$. Thus, if $j\leq N$, we have that $A(\delta,{\bf y}^j)\in \mathcal I_i^{\perp}$. Otherwise,  $A(\delta,{\bf y}^j)=A(\delta,{\bf x})\cap K_j=\emptyset$. In either case, we conclude that ${\bf y}^j\in c_{0,\mathcal I_j^{\perp}}(X)$. 

\item The sequence $({\bf y}^j)_{j\in\mathbb N}$ converges to 0. If $\varepsilon>0$ is given, there is $N\in\mathbb N$ such that $A(\varepsilon,{\bf x})\subset K_1\cup\cdots\cup K_N$. If $j>N$ and $n\in K_j$, we have $\|x_n\|<\varepsilon$. So $\displaystyle\|{\bf y}^j\|=\sup_{n\in K_j}\|x_n\|\leq\varepsilon$ if $j>N$.
\end{enumerate}
The two previous claims show that 
\begin{align*}
    \Psi\colon c_{0, \mathcal I^{\perp}}(X)&\longrightarrow c_0((c_{0,\mathcal I_j^{\perp}}(X))_{j\in\mathbb N})\\
    {\bf x}=(x_n)&\longmapsto({\bf y}^j)_{j\in\mathbb N}
\end{align*}
is well defined and is clearly  a linear isometry.

To see that $\Psi$ is onto,  let $({\bf y}^j)\in c_0((c_{0,\mathcal I_j^\perp}(X))_{j\in\mathbb N})$ be given and consider the sequence ${\bf x}=(x_n)_{n}$ defined by $x_n=y^j_n$ if $n\in K_j$. Let $\varepsilon>0$, there is $N\in\mathbb N$ such that 
$\|{\bf y}^j\|=\displaystyle\sup_{n\in K_j}\|y_n^j\|<\varepsilon$ for all $j>N$. Thus,
$A(\varepsilon,{\bf x})\subset K_1\cup\cdots\cup K_N$. Also note that
$A(\varepsilon,{\bf x})\cap K_j=\{n\in K_j\,\colon\,\|y_n^j\|\geq\varepsilon\}\in \ideal_j^{\perp}$ for all $j\leq N$, which yields $(x_n)\in c_{0,\mathcal I^{\perp}}(X)$. Clearly, $\Psi({\bf x})=({\bf y}^j)$.
\end{proof}

To study the Banach space $c_{0,\mathcal I\times\mathcal J}(X)$ we  need to compute the norm of the quotient  $\ell_\infty/c_{0,\mathcal I}$.
For that end  we recall the definition of $\mathcal I-\limsup$ of a sequence.

\begin{definition}
\cite{demirci}
Let $\mathcal I$ be an ideal on $\N$. For each sequence ${\bf x}=(x_n)$  in $\mathbb R$, we set
\begin{gather*}
B_{\bf x}=\{b\in\mathbb R\,\colon\,\{k\in\mathbb N\,\colon\,x_k>b\}\not\in\mathcal I\}
\end{gather*}
and 
\begin{gather*}
 \mathcal I-\limsup x_n=
 \begin{cases}
\sup B_{\bf x},& B_{\bf x}\neq\emptyset,\\
-\infty, & B_{\bf x}=\emptyset.
 \end{cases}
 \end{gather*}
\end{definition}

Note that a sequence $(x_n)$ is $\mathcal I$-convergent to 0 iff
$\mathcal I-\limsup|x_n|=0$ (see \cite[Theorem 4]{demirci}).

\begin{lemma}
\label{properties of I-limsup}
Let $\mathcal I$ be an ideal on $\N$.
\begin{enumerate}
\item If $x_n\leq y_n$ for each $n\in\mathbb N$, then $\mathcal I-\limsup x_n\leq\mathcal I-\limsup y_n$.

\item If $c\in\mathbb R$, then $\mathcal I-\limsup c+x_n=c+\mathcal I-\limsup x_n$.
\end{enumerate}
\end{lemma}

\begin{proof}
(1) If $\mathcal I-\limsup x_n=-\infty$, there is nothing to prove. Suppose that $B_{\bf x}\neq\emptyset$ and let $\varepsilon>0$ be given. If $b\in B_{\bf x}$ satisfies $\mathcal I-\limsup x_n-\varepsilon<b$, then $\{k\in\mathbb N\,\colon\,y_k>b\}\not\in\mathcal I$. Hence $b\in B_{\bf y}$ and $\mathcal I-\limsup x_n-\varepsilon\leq\mathcal I-\limsup y_n$. Since  $\varepsilon>0$ was arbitrary, the result follows.

(2) Just observe that 
\begin{gather*}
\{b\in\mathbb R\,\colon\,\{k\in\mathbb N\,\colon\,x_k+c>b\}\not\in\mathcal I\}=c+\{b'\in\mathbb R\,\colon\,\{k\in\mathbb N\,\colon\,x_k>b'\}\not\in\mathcal I\}.
\qedhere
\end{gather*}
\end{proof}

\begin{lemma}
\label{quotient norm of l/cI}
Let $X$ be a Banach space and  ${\bf x}=(x_n)\in\ell_\infty(X)$. Then 
    \begin{gather*}
        \mathcal I-\limsup\|x_n\|=\|(x_n)+c_{0,\mathcal I}(X)\|.
    \end{gather*} 
\end{lemma}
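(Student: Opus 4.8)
The plan is to prove both inequalities between $\alpha := \mathcal I\text{-}\limsup\|x_n\|$ and $\beta := \|(x_n)+c_{0,\ideal}(X)\|$. Recall that $\beta = \inf\{\|(x_n)-(z_n)\|_\infty : (z_n)\in c_{0,\ideal}(X)\}$.

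First I would prove $\beta \le \alpha$. Fix $\varepsilon>0$; I want to produce a witness ${\bf z}\in c_{0,\ideal}(X)$ with $\|{\bf x}-{\bf z}\|_\infty \le \alpha+\varepsilon$. Set $A = \{n : \|x_n\| \ge \alpha+\varepsilon\}$. The key point is that $A\in\ideal$: indeed, if $A\notin\ideal$ then $\alpha+\varepsilon/2 \in B_{(\|x_n\|)}$ (since $\{n:\|x_n\|>\alpha+\varepsilon/2\} \supseteq A \notin \ideal$), which forces $\mathcal I\text{-}\limsup\|x_n\| \ge \alpha+\varepsilon/2 > \alpha$, a contradiction. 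Now define ${\bf z} = (\chi_A(n)x_n)_n$; then $\mathrm{supp}({\bf z})\subseteq A\in\ideal$, so by Proposition \ref{span of I}(1) (the version for $\ell_\infty(X)$, which is what that proposition actually states) ${\bf z}\in c_{0,\ideal}(X)$, and $\|{\bf x}-{\bf z}\|_\infty = \sup_{n\notin A}\|x_n\| \le \alpha+\varepsilon$. Letting $\varepsilon\to 0$ gives $\beta\le\alpha$. (A small caveat: if $\alpha = -\infty$, i.e. $B_{(\|x_n\|)}=\emptyset$, then $\mathcal I\text{-}\limsup\|x_n\|=0$ actually, since $\|x_n\|\ge 0$; so in fact $\alpha\ge 0$ always and this degenerate case does not arise — or one notes $\|x_n\|\ge0$ gives $0\in\overline{B_{\bf x}}$-type reasoning. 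I would handle this at the outset by remarking $\alpha\ge 0$.)

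Next, $\alpha \le \beta$. Take any ${\bf z} = (z_n)\in c_{0,\ideal}(X)$; I must show $\alpha \le \|{\bf x}-{\bf z}\|_\infty =: M$. For each $n$, $\|x_n\| \le \|x_n - z_n\| + \|z_n\| \le M + \|z_n\|$, so by Lemma \ref{properties of I-limsup}(1), $\mathcal I\text{-}\limsup\|x_n\| \le \mathcal I\text{-}\limsup(M+\|z_n\|) = M + \mathcal I\text{-}\limsup\|z_n\|$ by Lemma \ref{properties of I-limsup}(2). Since ${\bf z}\in c_{0,\ideal}(X)$, we have $\mathcal I\text{-}\limsup\|z_n\| = 0$ (this is the cited characterization from \cite{demirci}, restated just after the definition of $\mathcal I\text{-}\limsup$). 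Hence $\alpha \le M$. Taking the infimum over all such ${\bf z}$ yields $\alpha \le \beta$, completing the proof.

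The two inequalities together give the equality. I do not expect a serious obstacle here; the only mild subtlety is being careful with the $-\infty$ convention and making sure the cited fact "$\mathcal I$-$\lim x_n = 0$ iff $\mathcal I\text{-}\limsup|x_n| = 0$" is applied correctly to the scalar sequence $(\|z_n\|)$ rather than to a vector-valued object — which is fine since $\|z_n\|\ge 0$ and $\mathcal I$-$\lim z_n = {\bf 0}$ in $X$ is by definition equivalent to $\mathcal I$-$\lim \|z_n\| = 0$ in $\mathbb R$.
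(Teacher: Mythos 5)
Your proof is correct and follows essentially the same route as the paper's: $\alpha\le\beta$ via the triangle inequality together with Lemma \ref{properties of I-limsup} and the fact that $\mathcal I$-$\limsup\|z_n\|=0$ for ${\bf z}\in c_{0,\mathcal I}(X)$, and $\beta\le\alpha$ by observing that $A(r,{\bf x})\in\ideal$ for $r>\alpha$ and truncating ${\bf x}$ there to produce a witness. If anything, your direct construction of the witness is written more cleanly than the paper's contradiction step.
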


\begin{proof}
Let ${\bf z}\in c_{0,\mathcal I}(X)$. Then $\|x_n\|\leq\|{\bf x-z}\|+\|z_n\|$ for all $n\in\mathbb N$. By Lemma \ref{properties of I-limsup} we have $\mathcal I-\limsup\|x_n\|\leq\|{\bf x-z}\|$. Thus, $\mathcal I-\limsup\|x_n\|\leq\inf\{\|{\bf x-z}\|\,\colon\,{\bf z}\in c_{0,\mathcal I}(X)\}$. Now let $r\in\mathbb R$ be such that 
    \begin{gather*}
        \mathcal I-\limsup\|x_n\|<r<\inf\{\|{\bf x-z}\|\,\colon\,{\bf z}\in c_{0,\mathcal I}(X)\}.
    \end{gather*}
Note that $A:=A(r,{\bf x})\in\mathcal I$. If $A=\emptyset$, we are done. If not, for $n\in A$ we have ${\bf z}=x_n\chi_{\{n\}}\in c_{0,\mathcal I}(X)$ and $r<\|{\bf x-z}\|=0$, which is impossible. Whence, $\mathcal I-\limsup\|x_n\|=\inf\{\|{\bf x-z}\|\,\colon\,{\bf z}\in c_{0,\mathcal I}(X)\}=\|(x_n)+c_{0,\mathcal I}(X)\|$.
\end{proof}

\begin{theorem}
\label{c_0,I for Fubini products}
Let $X$ be a Banach space and $\mathcal I$ and $\mathcal J$ be ideals on $\mathbb N$.  Then $c_{0,\mathcal I\times\mathcal J}(X)/c_{0,{\mathcal J}^{\omega}}(X)$ is isomorphic to $c_{0,\mathcal I}(\ell_\infty(X)/c_{0,\mathcal J}(X))$.
\end{theorem}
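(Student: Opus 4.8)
The plan is to identify $c_{0,\mathcal I\times\mathcal J}(X)$ as a space of sequences indexed by the first coordinate $n\in\N$, whose $n$-th entry is the "row" $(x_{n,m})_{m\in\N}\in\ell_\infty(X)$, and then to push this description through the quotient by $c_{0,\mathcal J^\omega}(X)$. First I would set up notation: write an element of $\ell_\infty(X)$ indexed by $\N\times\N$ as $\mathbf x=(x_{n,m})$, and for each $n$ let $\mathbf x_n=(x_{n,m})_{m\in\N}\in\ell_\infty(X)$ be its $n$-th row. Under the isomorphism $\ell_\infty(X)\cong\ell_\infty(\ell_\infty(X))$ sending $\mathbf x$ to $(\mathbf x_n)_n$, Theorem \ref{directsum} (with all $\ideal_n=\mathcal J$) identifies $c_{0,\mathcal J^\omega}(X)$ with the subspace of sequences $(\mathbf x_n)_n$ such that each $\mathbf x_n\in c_{0,\mathcal J}(X)$; call this subspace $\ell_\infty(c_{0,\mathcal J}(X))$ sitting inside $\ell_\infty(\ell_\infty(X))$.

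Next I would compute the quotient. The key computational tool is Lemma \ref{quotient norm of l/cI}: for a fixed row, $\|\mathbf x_n+c_{0,\mathcal J}(X)\|=\mathcal J\text{-}\limsup_m\|x_{n,m}\|$. So the quotient map $q\colon\ell_\infty(X)\to\ell_\infty(X)/c_{0,\mathcal J}(X)$ applied coordinatewise sends $\mathbf x=(\mathbf x_n)_n$ to the sequence $\big(q(\mathbf x_n)\big)_n$ in $\ell_\infty\big(\ell_\infty(X)/c_{0,\mathcal J}(X)\big)$, and this sequence is bounded with sup-norm $\sup_n \mathcal J\text{-}\limsup_m\|x_{n,m}\|$. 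The heart of the argument is then the claim that for $\mathbf x\in\ell_\infty(X)$ indexed by $\N\times\N$,
\begin{gather*}
\mathbf x\in c_{0,\mathcal I\times\mathcal J}(X)\quad\Longleftrightarrow\quad \mathcal I\text{-}\lim_n q(\mathbf x_n)=\mathbf 0 \text{ in } \ell_\infty(X)/c_{0,\mathcal J}(X).
\end{gather*}
To prove this, fix $\varepsilon>0$ and unravel both sides. The right-hand side says $\{n\colon \mathcal J\text{-}\limsup_m\|x_{n,m}\|\ge\varepsilon\}\in\mathcal I$. Using the characterization $\mathcal J\text{-}\limsup_m\|x_{n,m}\|<\varepsilon$ iff $\{m\colon\|x_{n,m}\|\ge\varepsilon\}\in\mathcal J$ (with a small $\varepsilon/2$ vs $\varepsilon$ adjustment, which is why the final statement is only an isomorphism, not an isometry), this matches precisely the defining condition of $\mathcal I\times\mathcal J$ applied to $A(\varepsilon,\mathbf x)=\{(n,m)\colon\|x_{n,m}\|\ge\varepsilon\}$, namely $\{n\colon\{m\colon(n,m)\in A(\varepsilon,\mathbf x)\}\notin\mathcal J\}\in\mathcal I$. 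Both inclusions follow by choosing the threshold in one coordinate slightly smaller than in the other.

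From the displayed equivalence, the map $\mathbf x\mapsto(q(\mathbf x_n))_n$ descends to a well-defined linear bijection $c_{0,\mathcal I\times\mathcal J}(X)/c_{0,\mathcal J^\omega}(X)\to c_{0,\mathcal I}\big(\ell_\infty(X)/c_{0,\mathcal J}(X)\big)$; injectivity on the quotient is exactly the statement that the kernel of $\mathbf x\mapsto(q(\mathbf x_n))_n$ restricted to $c_{0,\mathcal I\times\mathcal J}(X)$ equals $c_{0,\mathcal J^\omega}(X)$, and surjectivity follows by lifting each coordinate. For boundedness of the map and its inverse one compares $\|\mathbf x+c_{0,\mathcal J^\omega}(X)\|=\mathcal J^\omega\text{-}\limsup\|x_{n,m}\|$ (by Lemma \ref{quotient norm of l/cI}) with $\sup_n\mathcal J\text{-}\limsup_m\|x_{n,m}\|$; these two quantities need not be equal but are comparable up to a universal constant (indeed, unwinding the direct-sum structure of $\mathcal J^\omega$ shows they differ by at most a factor of $2$), so the induced map is a Banach-space isomorphism. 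I expect the main obstacle to be the bookkeeping in this last norm comparison: relating $\mathcal J^\omega\text{-}\limsup$ over the disjoint union $\N\times\N$ to the supremum over $n$ of the row-wise $\mathcal J\text{-}\limsup$'s, and being careful that the $\varepsilon$-threshold mismatches only cost a bounded multiplicative constant rather than breaking surjectivity — this is precisely where the theorem weakens from "isometric" to "isomorphic."
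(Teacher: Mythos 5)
Your proposal is correct and follows essentially the same route as the paper: the row-wise quotient map $\mathbf x\mapsto(q(\mathbf x_n))_n$, the equivalence between membership in $c_{0,\mathcal I\times\mathcal J}(X)$ and $\mathcal I$-convergence of the quotient classes via Lemma \ref{quotient norm of l/cI} together with the $\delta$ versus $2\delta$ adjustment, identification of the kernel with $c_{0,\mathcal J^\omega}(X)$, and surjectivity by lifting representatives. The only cosmetic difference is that the paper gets the isomorphism of the quotient from the fact that $\Psi$ is a bounded surjection with the right kernel (open mapping theorem), whereas you estimate the two quotient norms directly; in fact, unwinding $\mathcal J^\omega\text{-}\limsup$ over the rows shows your two quantities are equal, so your factor-of-$2$ hedge is harmless.
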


\proof
  We start by proving:
     \begin{claim}\label{relationships between I-limits}
         Let ${\bf x}=(x_n)\in\ell_\infty(X)$ and $\delta>0$ be given.
\begin{enumerate}
    \item If $A(\delta,{\bf x})\in\mathcal I$, then $\mathcal I-\limsup_j\|x_j\|\leq\delta$.
    \item If $\mathcal I-\limsup_j\|x_j\|\leq\delta$, then $A(2\delta,{\bf x})\in\mathcal I$.
\end{enumerate}
     \end{claim}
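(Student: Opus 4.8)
The plan is to prove the two implications of Claim~\ref{relationships between I-limits} directly from the definitions of $A(\delta,{\bf x})$ and $\ideal$-$\limsup$, working with the real sequence $\|x_j\|$ and applying Lemma~\ref{properties of I-limsup}. Throughout, set $a_j = \|x_j\|$, so $A(\delta,{\bf x}) = \{j : a_j \ge \delta\}$ and $B_{{\bf a}} = \{b \in \mathbb{R} : \{j : a_j > b\} \notin \ideal\}$.

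For part (1), assume $A(\delta,{\bf x}) \in \ideal$. I want to show $\sup B_{{\bf a}} \le \delta$ (the case $B_{{\bf a}} = \emptyset$ giving $\ideal$-$\limsup = -\infty \le \delta$ trivially). Suppose toward a contradiction that some $b \in B_{{\bf a}}$ has $b > \delta$. Then $\{j : a_j > b\} \notin \ideal$; but $\{j : a_j > b\} \subseteq \{j : a_j \ge \delta\} = A(\delta,{\bf x}) \in \ideal$, and an ideal is closed under subsets, so $\{j : a_j > b\} \in \ideal$, a contradiction. Hence every $b \in B_{{\bf a}}$ satisfies $b \le \delta$, so $\ideal$-$\limsup_j a_j = \sup B_{{\bf a}} \le \delta$.

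For part (2), assume $\ideal$-$\limsup_j a_j \le \delta$, and I want $A(2\delta,{\bf x}) = \{j : a_j \ge 2\delta\} \in \ideal$. If $\delta = 0$ this says $\ideal$-$\limsup_j a_j \le 0$, i.e. (since $a_j \ge 0$, using Lemma~\ref{properties of I-limsup}(1) or the remark after the definition of $\ideal$-$\limsup$) the sequence is $\ideal$-convergent to $0$, so $A(2\delta,{\bf x}) = A(0,{\bf x})$ — actually here one should note $\{j : a_j \ge 2\delta\}$ with $\delta=0$ is all of $\mathbb{N}$ if some $a_j>0$; better to treat $\delta>0$ and handle $\delta = 0$ by observing the claim is only invoked for $\delta>0$, or note $2\delta$ can be replaced by any number $>\delta$. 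Assume $\delta > 0$. Pick any $b$ with $\delta < b < 2\delta$. Since $b > \delta \ge \sup B_{{\bf a}}$, we have $b \notin B_{{\bf a}}$, which by definition means $\{j : a_j > b\} \in \ideal$. Now $\{j : a_j \ge 2\delta\} \subseteq \{j : a_j > b\}$ because $b < 2\delta$, so $A(2\delta,{\bf x}) \in \ideal$ by closure under subsets.

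I do not anticipate a serious obstacle here; the only subtlety is the boundary behavior when $\delta = 0$ and the fact that the statement is not scale-tight (the constant $2$ is not optimal — any constant strictly bigger than $1$ works), which should be acknowledged but causes no difficulty since in the application (the proof of Theorem~\ref{c_0,I for Fubini products}) $\delta$ ranges over positive reals and losing a factor of $2$ is harmless. One may also wish to note that part (1) together with the implication $B_{{\bf x}} = \emptyset \Rightarrow$ trivial already recovers the fact that $\ideal$-$\lim a_j = 0$ implies $A(\delta,{\bf x}) \in \ideal$ for all $\delta > 0$, tying the claim back to the definitions.
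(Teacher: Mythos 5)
Your proof is correct and follows essentially the same route as the paper's: part (1) uses the downward closure of $\ideal$ applied to $\{j:\|x_j\|>b\}\subseteq A(\delta,{\bf x})$, and part (2) is just the contrapositive form of the paper's argument (the paper assumes $A(2\delta,{\bf x})\notin\ideal$ and derives $2\delta\leq\ideal$-$\limsup\|x_j\|$, while you pick $\delta<b<2\delta$ with $b\notin B_{\bf x}$ directly). The aside about $\delta=0$ is superfluous since the claim already assumes $\delta>0$, but it causes no harm.
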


\begin{proof}[Proof of claim.] (1): Suppose that $\mathcal I-\limsup_{j}\|x_j\|>\delta$. There is $b\in\mathbb R$ such that $\delta<b$ and
$\{j\in\mathbb N\,\colon\,\|x_j\|>b\}\not\in\mathcal I$. But we have $\{j\in\mathbb N\,\colon\,\|x_j\|>b\}\subset A(\delta,{\bf x})\in\mathcal I$, which is absurd. So, $\mathcal I-\limsup_{j}\|x_j\|\leq\delta$.

   (2): If $A(2\delta,{\bf x})=\{j\in\mathbb N\,\colon\,\|x_j\|\geq 2\delta\}\not\in\mathcal I$, we have $\{j\in \mathbb N\,\colon\,\|x_j\|>\delta'\}\not\in\mathcal I$ for all $\delta'<2\delta$. Hence, $\delta'\leq\mathcal I-\limsup_{j}\|x_j\|$. Since $\delta'<2\delta$ were arbitrary, we conclude that  $2\delta\leq\mathcal I-\limsup_{j}\|x_j\|$, a contradiction.
\end{proof}

Now we continue with the proof of the theorem. If ${\bf x}=(x_{(n,m)})\in\ell_\infty(\mathbb N\times\mathbb N)(X)$, we set
${\bf x}_{(n)}=(x_{(n,m)})_{m\in\mathbb N}$. Let
\begin{align*}
    \Psi\colon c_{0,\mathcal I\times\mathcal J}(X)&\to c_{0,\mathcal I}(\ell_\infty(X)/c_{0,\mathcal J}(X))\\
    {\bf x}=(x_{(n,m)})&\mapsto({\bf x}_{(n)}+c_{0,\mathcal J}(X))_{n\in\mathbb N}.
\end{align*}
From Claim \ref{relationships between I-limits} and Proposition \ref{quotient norm of l/cI} we have
\begin{align*}
    {\bf x}=(x_{(n,m)})\in c_{0,\mathcal I\times\mathcal J}(X)
    &\Longleftrightarrow(\forall\,\varepsilon>0)\,
    (\{n\in\mathbb N\,\colon\,\{m\in\mathbb N\,\colon\,(n,m)\in A(\varepsilon,{\bf x})\}\not\in\mathcal J\}\in\mathcal I)\\
    &\Longleftrightarrow(\forall\,\varepsilon>0)\,
    (\{n\in\mathbb N\,\colon\,\{m\in\mathbb N\,\colon\,\|x_{(n,m)}\|\geq\varepsilon\}\not\in\mathcal J\}\in\mathcal I)\\
    &\Longleftrightarrow(\forall\,\varepsilon>0)\,
    (\{n\in\mathbb N\,\colon\,\mathcal J-\limsup\|{\bf x}_{(n)}\|\geq\varepsilon\}\in\mathcal I)\\
    &\Longleftrightarrow(\forall\,\varepsilon>0)\,
    (\{n\in\mathbb N\,\colon\,\|{\bf x}_{(n)}+c_{0,\mathcal J}(X)\|\geq\varepsilon\}\in\mathcal I)\\
    &\Longleftrightarrow({\bf x}_{(n)}+c_{0,\mathcal J}(X))_{n\in\mathbb N}\in c_{0,\mathcal I}(\ell_\infty(X)/c_{0,\mathcal J}(X)).
\end{align*}
So, $\Psi$ is well defined. Now, if $({\bf y}^n+c_{0,\mathcal J}(X))_{n\in\mathbb N}\in c_{0,\mathcal I}(\ell_\infty(X)/c_{0,\mathcal J}(X))$ is given, we set
$x_{(n,m)}=y_m^n$ for $n,m\in\mathbb N$. It is not difficult to check that ${\bf x}=(x_{(n,m)})\in c_{0,\mathcal I\times\mathcal J}(X)$ and $\Psi({\bf x})=({\bf y}^n+c_{0,\mathcal J}(X))_{n\in\mathbb N}$. Now, 
\begin{gather*}
   \|\Psi({\bf x})\|=\sup_{n\in\mathbb N}\left(\mathcal J-\limsup\|{\bf x}_{(n)}\|\right)\leq\sup_{n\in\mathbb N}\sup_{m\in\mathbb N}\|x_{(n,m)}\|
   =\|{\bf x}\|_\infty.
\end{gather*}
Also we have
\begin{align*}
    \Psi({\bf x})={\bf0}&\Longleftrightarrow (\forall\, n\in\mathbb N)\, (\,{\bf x}_{(n)}\in c_{0,\mathcal J}(X)\,)\\
    &\Longleftrightarrow (\forall\,\varepsilon>0)\,(\forall\,n\in\mathbb N)\,(\,A(\varepsilon,{\bf x}_{(n)})\in{\mathcal J}\,)\\
    &\Longleftrightarrow (\forall\,\varepsilon>0)\, (\,A(\varepsilon,{\bf x})\in{\mathcal J}^\omega\, )\\
    &\Longleftrightarrow{\bf x}\in c_{0,{\mathcal J}^\omega}(X).\qedhere
\end{align*}
\endproof

To state the next result,  we recall that if $X$ and $Y$ are Banach spaces, $X\stackrel{\vee}{\otimes} Y$ denotes the \textit{injective tensor product} of $X$ and $Y$ (see \cite[Chapter 1]{diestel and all}).

\begin{theorem}
\label{tensor products}
Let $X$ be  a Banach space and $\mathcal I$ be an ideal on $\N$. Then $c_{0,\mathcal I}\stackrel{\vee}{\otimes} X$ is isometric to a subspace of $c_{0,\mathcal I}(X)$.
\end{theorem}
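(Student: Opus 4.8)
The plan is to exhibit the canonical embedding of the injective tensor product into a space of continuous functions and then identify its image. Recall the classical fact (see \cite{diestel and all}) that if $K$ is a locally compact Hausdorff space, then $C_0(K)\stackrel{\vee}{\otimes} X$ is isometric to the space $C_0(K,X)$ of continuous $X$-valued functions on $K$ vanishing at infinity, via the map sending an elementary tensor $f\otimes x$ to the function $p\mapsto f(p)x$. Applying this with $K=U_{\mathcal I}$ and using Proposition \ref{identification c_0,I and C_0(U_I)}, which gives the Banach lattice isometry $\Phi_{\mathcal I}\colon c_{0,\mathcal I}\to C_0(U_{\mathcal I})$, we obtain an isometry
\[
c_{0,\mathcal I}\stackrel{\vee}{\otimes} X \;\cong\; C_0(U_{\mathcal I})\stackrel{\vee}{\otimes} X \;\cong\; C_0(U_{\mathcal I},X).
\]
So it suffices to produce an into isometry from $C_0(U_{\mathcal I},X)$ into $c_{0,\mathcal I}(X)$.

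The second step is to define the restriction-to-$\mathbb N$ map. For $g\in C_0(U_{\mathcal I},X)$ set $R(g)=(g(n))_{n\in\mathbb N}$, where we use the identification of $n\in\mathbb N$ with the principal ultrafilter. Since $\mathbb N\subseteq U_{\mathcal I}$, this is well defined, and $\|R(g)\|_\infty=\sup_n\|g(n)\|\le \|g\|_\infty$; conversely, since $\mathbb N$ is dense in $\overline{\mathbb N}\supseteq U_{\mathcal I}$ and $g$ is continuous and vanishes at infinity, a standard argument (mirroring the scalar-valued computation in the proof of Proposition \ref{identification c_0,I and C_0(U_I)}) shows $\|R(g)\|_\infty=\|g\|_\infty$, so $R$ is an isometry. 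It remains to check that $R(g)\in c_{0,\mathcal I}(X)$: for $\varepsilon>0$, the set $\{p\in U_{\mathcal I}\colon \|g(p)\|\ge \varepsilon\}$ is compact and contained in $\bigcup\{A^*\colon A\in\mathcal I\}$, so by compactness it is covered by finitely many $A_j^*$ with $A_j\in\mathcal I$; hence $A(\varepsilon,R(g))\subseteq \{p\colon\|g(p)\|\ge\varepsilon\}\cap\mathbb N\subseteq \bigcup_j A_j\in\mathcal I$, which shows $\mathcal I\text{-}\lim g(n)=\mathbf 0$. Thus $R$ maps into $c_{0,\mathcal I}(X)$ isometrically, and composing with the isometries above completes the proof.

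The one genuine point requiring care — the step I expect to be the main obstacle — is justifying $\|R(g)\|_\infty=\|g\|_\infty$, i.e. that the sup over the principal ultrafilters recovers the sup over all of $U_{\mathcal I}$. This is where we must use that $g$ vanishes at infinity together with density of $\mathbb N$ in $\beta\mathbb N$: given $p\in U_{\mathcal I}$ and $\varepsilon>0$, continuity of $g$ at $p$ yields a basic neighborhood $A^*$ (with $A\in p$) on which $\|g-g(p)\|<\varepsilon$; since $A$ is infinite it contains some $n\in\mathbb N$, and then $\|g(n)\|>\|g(p)\|-\varepsilon$, giving $\sup_n\|g(n)\|\ge \|g(p)\|$ after letting $\varepsilon\to 0$. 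Everything else is either the cited scalar Banach–Stone-type identification or the routine verification that $R$ is linear. Note that $R$ need not be onto — its image consists of those $\mathbf x\in c_{0,\mathcal I}(X)$ that "extend continuously" to $U_{\mathcal I}$, which is in general a proper subspace — so the statement is necessarily about isometry onto a subspace, as claimed.
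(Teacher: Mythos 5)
Your proof is correct, but it follows a genuinely different route from the paper's. The paper never leaves the sequence-space picture: for an element $u=\sum_{j}(x_n^j)_n\otimes y_j$ of the algebraic tensor product it computes the injective norm directly from its definition as a supremum over $B_{c_{0,\mathcal I}^*}\times B_{X^*}$ and checks that it equals $\bigl\|(\sum_j x_n^j y_j)_n\bigr\|_\infty$, so the explicit map $\sum_j(x_n^j)_n\otimes y_j\mapsto(\sum_j x_n^jy_j)_n$ extends to an into isometry; this is short and self-contained. You instead pass through $\Phi_{\mathcal I}$, the classical identification $C_0(K)\stackrel{\vee}{\otimes}X\cong C_0(K,X)$ for locally compact $K$, and the restriction map $R(g)=(g(n))_n$; your two key verifications (that $\|R(g)\|_\infty=\|g\|_\infty$ via density of $\N$ in the open set $U_{\mathcal I}$, and that $R(g)\in c_{0,\mathcal I}(X)$ via compactness of $\{p:\|g(p)\|\geq\varepsilon\}$) are both sound, the latter being exactly the surjectivity step in the proof of Proposition \ref{identification c_0,I and C_0(U_I)}. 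Two minor remarks. First, for the statement you only need that the canonical map $C_0(U_{\mathcal I})\otimes X\to C_0(U_{\mathcal I},X)$ is an \emph{into} isometry, which is the same extreme-point/duality computation the paper carries out on sequences; if you invoke the full identification, make sure the cited reference covers the locally compact case (the paper's reference is stated for compact $K$). Second, your closing description of the image as the sequences that extend continuously to $U_{\mathcal I}$ tacitly uses density of $C_0(U_{\mathcal I})\otimes X$ in $C_0(U_{\mathcal I},X)$ (a partition-of-unity argument); this is true but not needed for the theorem. The trade-off is that your route gives a conceptual picture of the embedding, while the paper's gives an explicit elementary formula; note the two embeddings coincide, since your composite sends $f\otimes x$ to $(f(n)x)_n$.
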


\begin{proof}
If $u=\sum_{j=1}^m(x_n^j)_n\otimes y_j\in c_{0,\mathcal I}\otimes X$, then 
\begin{align*}
        |u|_{\vee}&=\sup_{x^*\in B_{c_{0,\mathcal I}^*},y^*\in B_{X^*}}\left|\sum_{j=1}^mx^*((x_n^j)_n)y^*(y_j)\right|\\
        &=\sup_{x^*\in B_{c_{0,\mathcal I}^*},y^*\in B_{X^*}}\left|x^*\left(\sum_{j=1}^m(x_n^jy^*(y_j))_n\right)\right|\\
        &=\sup_{y^*\in B_{X^*}}\left\|\sum_{j=1}^m(x_n^jy^*(y_j))_n\right\|\\
        &=\sup_{y^*\in B_{X^*}}\left\|\sum_{j=1}^m(y^*(x_n^jy_j))_n\right\|\\
        &=\sup_{y^*\in B_{X^*}}\left\|\left(y^*\left(\sum_{j=1}^mx_n^jy_j\right)\right)_n\right\|\\ &=\left\|\left(\sum_{j=1}^mx_n^jy_j\right)_n\right\|.
\end{align*}
So the map 
\begin{gather*}
\sum_{j=1}^m(x_n^j)_n\otimes y_j\in c_{0,\mathcal I}\otimes X\mapsto\left(\sum_{j=1}^mx_n^jy_j\right)_n\in c_{0,\mathcal I}(X)
\end{gather*}
extends to an isometry from $c_{0,\mathcal I}\stackrel{\vee}{\otimes} X$ to $c_{0,\mathcal I}(X)$. 
\end{proof}

\begin{remark}
The above isometry is not onto in general. Indeed, if $\mathcal I=\mathcal P(\mathbb N)$, then  $c_{0,\mathcal  I}$ is $\ell_\infty$ and it is known that the existence of an isometry from $\ell_\infty\stackrel{\vee}{\otimes} X$ onto $\ell_\infty(X)$ implies that $X$ contains a complemented copy of $c_0$ (see \cite{leung-rabiger}).
However, when $\ideal=\fin$, the above isometry is onto as it is established in \cite[Theorem 1.1.11]{diestel and all}.
\end{remark}

\begin{question}
For which ideals  $\ideal$, is $c_{0,\mathcal I}\stackrel{\vee}{\otimes} X$  isometric to $c_{0,\mathcal I}(X)$? 
\end{question}

\subsection{Some  examples}
\label{examples}

In \cite{GU2018} was studied the smallest collection
$\mathcal{B}$ of ideals on $\N$ containing the ideal of finite sets and
closed under countable direct sums and the operation of taking orthogonal.   They  defined by recursion a sequence of ideals $P_\alpha$ and $Q_\alpha$ for $\alpha<\omega_1$. 
For every limit ordinal $\alpha<\omega_1$ we fix an increasing sequence $(\upsilon_{n}^{\alpha})_n$ of ordinals  such that
$\sup_n(\upsilon_{n}^{\alpha})=\alpha$.

\begin{itemize}
\item[(i)] $P_0=\mathcal{P}(\mathbb{N})$ and $Q_0=(P_{0})^{\perp}=\fin$.
\item[(ii)] $P_{\alpha+1}=(P_{\alpha}^{\perp})^\omega$.
\item[(iii)]  $P_{\alpha}=\oplus_{n}(P_{\upsilon_{n}^{\alpha}})^\perp$,  for $\alpha<\omega_1$ a limit ordinal.
\item[(iv)]  $Q_\alpha=(P_\alpha)^\perp$ for every $\alpha<\omega_1$.
\end{itemize}
Then  $\mathcal{B}=\{P_\alpha, Q_\alpha: \alpha<\omega_1\}$  has exactly $\aleph_1$ non isomorphic  ideals.

For example,  $P_1=\fin^\omega$ is the  sum countably many times the ideal $\fin$ of all finite subsets of $\N$, it is a well known ideal  sometimes  denoted by $\{\emptyset\}\times \fin$. Its orthogonal $Q_1=\fin^{\omega\perp}$ is  also denoted  $\fin\times \{\emptyset\}$.  All these ideals are Fr\'echet (that is, they satisfy that $\ideal^{\perp\perp}=\ideal$). In particular,  $(Q_\alpha)^\perp=P_\alpha$. 

\begin{theorem} \cite{GU2018} 

(i) Every ideal in $\mathcal{B}$ is isomorphic to either $P_\alpha$, $Q_\alpha$ or $P_\alpha\oplus Q_\alpha$ for some $\alpha<\omega_1$.

(ii) For every $\alpha<\beta<\omega_1$, there are subsets $A,B$ of $\N$ such that $P_\alpha\approx Q_\beta\restriction A$ and $Q_\alpha\approx P_\beta\restriction B$.

(iii) All ideals in $\mathcal{B}$ are Borel subsets of $2^\N$, and thus they are meager ideals. 
\end{theorem}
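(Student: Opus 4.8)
The plan is to run a single transfinite induction on $\alpha<\omega_1$, proving simultaneously four statements about the ideals $P_\alpha$ and $Q_\alpha$: \textbf{(F$_\alpha$)} both are Fréchet, with $P_\alpha^\perp=Q_\alpha$ and $Q_\alpha^\perp=P_\alpha$; \textbf{(B$_\alpha$)} both are Borel subsets of $2^{\N}$ of some countable rank; \textbf{(R$_\alpha$)} for every $\gamma<\alpha$ the ideals $P_\gamma$ and $Q_\gamma$ occur as restrictions $\ideal\restriction C$ of $P_\alpha$ and of $Q_\alpha$ in the strong form needed below (in particular $P_\gamma\cong Q_\alpha\restriction A$ and $Q_\gamma\cong P_\alpha\restriction B$ for suitable $A,B$, which is exactly statement (ii)); and \textbf{(A$_\alpha$)} the absorption laws $P_\alpha^{\omega}\cong P_\alpha$, $P_\alpha\oplus P_\alpha\cong P_\alpha$, $Q_\alpha\oplus Q_\alpha\cong Q_\alpha$, and $\ideal\oplus P_\alpha\cong P_\alpha$, $\ideal\oplus Q_\alpha\cong Q_\alpha$ whenever $\ideal\in\{P_0,P_\gamma,Q_\gamma\}$ with $\gamma<\alpha$, together with the fact that at a limit $\alpha$ the ideal $\bigoplus_n(P_{\upsilon_n^\alpha})^\perp$ does not depend, up to isomorphism, on the chosen cofinal sequence $(\upsilon_n^\alpha)_n$. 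Statements (ii) and (iii) of the theorem will be read off directly from the induction, and (i) will follow by a closure argument.

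Clauses (F$_\alpha$) and (B$_\alpha$) are the routine part. At a successor, $P_{\alpha+1}=(P_\alpha^\perp)^{\omega}=Q_\alpha^{\omega}=\bigoplus_nQ_\alpha$, and a direct computation with the elementary description of the orthogonal of a countable direct sum (the one recalled at the start of the proof of Theorem~\ref{omegaperp}) yields $(Q_\alpha^{\omega})^\perp=\{B:\exists N\,\forall n>N\,(B\cap K_n=\emptyset)\ \text{and}\ \forall n\,(B\cap K_n\in P_\alpha)\}$, the finite-support direct sum of countably many copies of $P_\alpha$; applying $\perp$ once more returns $Q_\alpha^{\omega}$. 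Hence $P_{\alpha+1}$ is Fréchet, $Q_{\alpha+1}$ is this finite-support sum, and $Q_{\alpha+1}$ is Fréchet as well; the limit stage is handled the same way. These explicit descriptions exhibit each $P_\alpha$ and $Q_\alpha$ (for $\alpha\ge 1$) as a countable direct sum, respectively a finite-support countable direct sum, of ideals already known to be Borel, and both operations are Borel and raise the rank by a bounded amount. Note that we never use ``the orthogonal of a Borel ideal is Borel'' — which is false in general — only these explicit forms.

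The substance of the theorem lies in clauses (R$_\alpha$) and (A$_\alpha$), which refer to one another and must be carried through the induction together; I expect this to be the main obstacle. The basic tools are: for any direct sum, the restriction to a set belonging to the ideal is the full power set, and deleting such a set leaves the ideal isomorphic to itself; and the index set of $P_\alpha$, written as a countable direct sum, can be partitioned so that the partial sum over one part realizes — using (R) and (F) at lower levels — a prescribed lower $P_\gamma$ or $Q_\gamma$, while the partial sum over the complementary part is again isomorphic to $P_\alpha$ (at a limit $\alpha$ one uses here that a cofinal subsequence of $(\upsilon_n^\alpha)_n$ still sums to $P_\alpha$). Iterating these constructions delivers both the restriction embeddings of (R$_\alpha$) and the absorption identities of (A$_\alpha$), and independence from the choice of cofinal sequence then follows by interleaving two such sequences and absorbing the repetitions. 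The delicate points are the bookkeeping at limit stages and keeping track of which of the three ``shapes'' a collapsed countable sum lands in — in particular the genuinely new phenomenon that a sum in which the supremum ordinal $\mu$ is attained both by a $P_\mu$-summand and by a $Q_\mu$-summand collapses to $P_\mu\oplus Q_\mu$ rather than to a single $P$ or $Q$.

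Granting the induction, the three parts follow. Part (ii) is (R). For part (iii): every $P_\alpha$ and $Q_\alpha$ is Borel by (B), hence so is every ideal in $\mathcal B$; and every proper ideal containing $\fin$ that has the Baire property — in particular every Borel one — is meager by Talagrand's theorem, so every proper member of $\mathcal B$ is meager. For part (i), let $\mathcal C$ be the class of ideals isomorphic to some $P_\alpha$, $Q_\alpha$, or $P_\alpha\oplus Q_\alpha$. Then $\fin=Q_0\in\mathcal C$; $\mathcal C$ is closed under $\perp$ because $P_\alpha^\perp=Q_\alpha$, $Q_\alpha^\perp=P_\alpha$ and $(P_\alpha\oplus Q_\alpha)^\perp=P_\alpha^\perp\oplus Q_\alpha^\perp\cong P_\alpha\oplus Q_\alpha$; and $\mathcal C$ is closed under countable direct sums, since splitting off every $P_{\beta_n}\oplus Q_{\beta_n}$ summand reduces $\bigoplus_n\mathcal K_n$ to a countable sum whose summands all have the form $P_{\beta_n}$ or $Q_{\beta_n}$, and then grouping these by ordinal and invoking $P_\beta^{\omega}\cong P_\beta$, $Q_\beta^{\omega}=P_{\beta+1}$ together with the absorption laws (A) collapses the whole sum to one of $P_\mu$, $Q_\mu$, $P_\mu\oplus Q_\mu$, where $\mu$ is the supremum of the ordinals that occur. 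By minimality of $\mathcal B$ this gives $\mathcal B\subseteq\mathcal C$, while each member of $\mathcal C$ arises from $\fin$ by the recursion (a $P_\alpha\oplus Q_\alpha$ being a binary direct sum of two members of $\mathcal B$), so $\mathcal C\subseteq\mathcal B$ up to isomorphism; hence $\mathcal B=\mathcal C$, which is (i).
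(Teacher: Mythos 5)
The paper does not prove this theorem at all: it is quoted with a citation to \cite{GU2018}, so there is no in-paper argument to compare yours against; what you have written is a from-scratch reconstruction. The routine parts of your reconstruction are sound. The computation of $(\bigoplus_n\ideal_n)^\perp$ as the finite-support sum of the $\ideal_n^\perp$ does give $P_\alpha^\perp=Q_\alpha$, $Q_\alpha^\perp=P_\alpha$ and Fr\'echetness by induction; you are right to avoid the (false) principle that the orthogonal of a Borel ideal is Borel and to get Borelness from the explicit direct-sum descriptions instead; meagerness of the proper members via the Talagrand/Jalali--Naini theorem is correct; and part (ii) does follow by chaining the one-step restrictions $Q_\gamma\cong P_{\gamma+1}\restriction K_0$ and $P_\gamma\cong Q_{\gamma+1}\restriction K_0$ through successor and limit stages.

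The genuine gap is in part (i), and it sits exactly where you predicted it would. The entire classification rests on the absorption laws (A$_\alpha$) and on the well-definedness of the limit stage (independence from the cofinal sequence $(\upsilon_n^\alpha)_n$), and these are asserted in a single sentence rather than proved; they are not routine, they refer to each other and to (R$_\alpha$) across levels, and they \emph{are} the content of the theorem. Worse, the one precise statement you do make about the collapse of a countable sum --- that $\bigoplus_n\mathcal K_n$ lands on $P_\mu$, $Q_\mu$ or $P_\mu\oplus Q_\mu$ with $\mu$ the supremum of the indices that occur --- is false as written: $\bigoplus_n Q_0=\fin^\omega=P_1$, whereas the supremum of the occurring indices is $0$. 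In general infinitely many copies of $Q_\beta$ produce $Q_\beta^\omega=P_{\beta+1}$ and push the resulting index strictly above the supremum, so the correct statement requires a case analysis on whether the supremum is attained, by $P$-summands or $Q$-summands, and finitely or infinitely often; this is also where one must verify which absorptions hold ($Q_\gamma\oplus Q_\delta\cong Q_\delta$ and $P_\gamma\oplus P_\delta\cong P_\delta$ for $\gamma<\delta$, $P_\delta\oplus P_\delta\cong P_\delta$) and, crucially, which fail ($P_\delta\oplus Q_\delta\not\cong P_\delta$, else the third shape in (i) would be redundant). Until those identities and the cofinal-sequence independence are actually established --- with the bookkeeping you acknowledge deferring --- part (i) is not proved.
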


Let  ${\sf WO}(\Q)$ denote the ideal of all  well founded subsets of ${\sf WO}(\Q)$.  For simplicity, we will write ${\sf WO}$ instead of ${\sf WO}(\Q)$. Observe that ${\sf WO}^\perp$ is the ideal of well
founded subsets of $(\Q, <^*)$ where $<^*$ is the reversed order of $\Q$. In fact, the map $x\mapsto -x$ from $\Q$ onto $\Q$ is an isomorphism between ${\sf WO}$ and $WO^\perp$. In particular, ${\sf WO}$ is a
Fr\'{e}chet ideal. We recall also that ${\sf WO}$ is a meager ideal since it is a co-analytic  subset of the Cantor cube $\{0,1\}^\Q$. 
 From the results in \cite{GU2018} we have

\begin{theorem}
\label{ByWO}
Every ideal $\ideal$ in $\mathcal{B}$ is isomorphic to a restriction ${\sf WO}\restriction A$ for some $A\subseteq \Q$.
\end{theorem}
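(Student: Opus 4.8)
The plan is to use the very definition of $\mathcal{B}$ as the \emph{smallest} class of ideals containing $\fin$ and closed under countable direct sums and orthogonals: it then suffices to verify that the class
\[
\mathcal{W}=\{\ideal:\ \ideal\text{ is isomorphic to }{\sf WO}\restriction A\text{ for some }A\subseteq\Q\}
\]
contains $\fin$ and is closed under these two operations, for then $\mathcal{B}\subseteq\mathcal{W}$, which is exactly the assertion. Two preliminary observations streamline everything. First, for $A\subseteq\Q$ one checks directly that ${\sf WO}\restriction A=\{C\subseteq A: C\text{ is well-founded in the induced order}\}$; hence ${\sf WO}\restriction A$ depends only on the order type of $(A,<)$, so one may write ${\sf WO}(L)$ for an abstract countable linear order $L$, and since every countable linear order embeds into $\Q$, $\mathcal{W}$ equals the class of ideals isomorphic to ${\sf WO}(L)$ for some countable linear order $L$. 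Second, $\perp$ is an isomorphism invariant, so closure of $\mathcal{W}$ under $\perp$ only has to be checked on representatives ${\sf WO}(L)$.

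Next I would isolate two lemmas describing how the assignment $L\mapsto{\sf WO}(L)$ interacts with order operations. \emph{(A) Sums.} If $(L_n)_n$ is a countable (possibly finite) family of countable linear orders and $L=\sum_n L_n$ is the ordered sum, with $L_m$ placed strictly below $L_n$ whenever $m<n$, then ${\sf WO}(L)\cong\bigoplus_n{\sf WO}(L_n)$; indeed, along an infinite descending chain in $L$ the index of the block containing the current point is non-increasing, hence eventually constant, so a subset of $L$ is well-founded iff each of its traces on the $L_n$ is. \emph{(B) Orthogonal.} For a countable linear order $L$, ${\sf WO}(L)^{\perp}\cong{\sf WO}(L^{*})$, where $L^{*}$ is $L$ with the reversed order. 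For $\subseteq$: if $B\subseteq L$ is not well-founded in $L^{*}$, it carries an infinite $<_L$-increasing sequence, whose range is an infinite member of ${\sf WO}(L)$ meeting $B$ infinitely. For $\supseteq$: if $B$ is well-founded in $L^{*}$ and $A\in{\sf WO}(L)$, then $A\cap B$ is well-founded both in $L$ and in $L^{*}$, hence finite, since an infinite linear order contains a copy of $\omega$ or of $\omega^{*}$.

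With these in hand the conclusion is immediate. We have $\fin\in\mathcal{W}$ because $\fin={\sf WO}\restriction\{1/n:n\ge 1\}$, a copy of $\omega^{*}$ inside $\Q$ whose only well-founded subsets are the finite ones. If $\ideal_n\cong{\sf WO}(L_n)$ for all $n$, then $\bigoplus_n\ideal_n\cong\bigoplus_n{\sf WO}(L_n)\cong{\sf WO}(\sum_n L_n)$ by (A), and $\sum_n L_n$ is again a countable linear order, so $\mathcal{W}$ is closed under countable direct sums. If $\ideal\cong{\sf WO}(L)$, then $\ideal^{\perp}\cong{\sf WO}(L)^{\perp}\cong{\sf WO}(L^{*})$ by (B), so $\mathcal{W}$ is closed under orthogonals. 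Minimality of $\mathcal{B}$ gives $\mathcal{B}\subseteq\mathcal{W}$, and we are done. (Equivalently, one may run the same computation along the explicit recursion: if $P_\alpha\cong{\sf WO}(L_\alpha)$ then $Q_\alpha=P_\alpha^{\perp}\cong{\sf WO}(L_\alpha^{*})$, $P_{\alpha+1}=(P_\alpha^{\perp})^{\omega}\cong{\sf WO}(\sum_{n}L_\alpha^{*})$, the limit case is handled by (A) and (B) termwise, and the remaining case $P_\alpha\oplus Q_\alpha$ is ${\sf WO}(L_\alpha+L_\alpha^{*})$.)

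The only genuine subtlety I anticipate is Lemma (B): one must be careful that the orthogonal is taken inside the correct countable underlying set, and must invoke that an infinite linear order always contains an $\omega$- or an $\omega^{*}$-chain to conclude that a set which is well-founded both ways round is finite. Everything else—Lemma (A), the identification of ${\sf WO}\restriction A$ with the well-founded subsets of $A$, the universality of $\Q$ for countable linear orders, and the closure bookkeeping—is routine.
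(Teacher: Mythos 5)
Your argument is correct and complete. Note that the paper offers no proof of this statement --- it is quoted from \cite{GU2018} --- so there is no in-text argument to compare against; what you have done is reconstruct the argument that the citation stands for, and it holds up. The preliminary reduction is right: ${\sf WO}\restriction A=\{A\cap B: B\in{\sf WO}\}$ is exactly the family of well-founded subsets of $(A,<)$, this depends only on the order type, and Cantor's universality of $\Q$ lets you pass freely between subsets of $\Q$ and abstract countable linear orders $L$. Your two lemmas are the crux and both check out: for (A), along an infinite descending chain in $\sum_n L_n$ the block index is non-increasing, hence eventually constant, so well-foundedness is detected blockwise, which is precisely the definition of $\bigoplus_n{\sf WO}(L_n)$ with respect to the partition into blocks; for (B), the inclusion ${\sf WO}(L^{*})\subseteq{\sf WO}(L)^{\perp}$ genuinely needs the Ramsey-type fact that every infinite linear order contains an $\omega$- or an $\omega^{*}$-chain, and you invoke it at the right place (and you correctly flag that the orthogonal must be computed inside the underlying set $L$). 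The base case $\fin={\sf WO}(\omega^{*})$ is correct, and the appeal to minimality of $\mathcal{B}$ closes the argument; your parenthetical run through the explicit recursion also covers the ideals $P_\alpha\oplus Q_\alpha$ appearing in the cited classification, via ${\sf WO}(L_\alpha+L_\alpha^{*})$. I see no gap.
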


\begin{theorem}
\label{the banach spaces c_0,a} 
The following hold for every countable ordinal $\alpha$.
 \begin{itemize}
     \item[(i)] $c_{0,P_{\alpha+1}}$ is isometric to $\ell_\infty (c_{0,Q_{\alpha}})$.

\item[(ii)] $c_{0,Q_{\alpha+1}}$ is isometric to $c_0 (c_{0,P_{\alpha}})$.

\item[(iii)] $c_{0,P_{\alpha}}$ is isometric to $\ell_\infty ((c_{0,Q_{\upsilon_n^\alpha}})_n)$ for $\alpha$ a limit ordinal.

\item[(iv)] $c_{0,Q_{\alpha}}$ is isometric to $c_0((c_{0,P_{\upsilon_n^\alpha}})_n)$ for $\alpha$ a limit ordinal.

 \end{itemize}   
\end{theorem}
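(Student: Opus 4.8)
The plan is to derive all four statements directly from the definitions of the ideals $P_\alpha$ and $Q_\alpha$ together with the structural isometry theorems already established in Section 5. Recall the recursion: $P_{\alpha+1}=(P_\alpha^\perp)^\omega = Q_\alpha^\omega$, $Q_\alpha = P_\alpha^\perp$, and $P_\alpha = \bigoplus_n (P_{\upsilon_n^\alpha})^\perp = \bigoplus_n Q_{\upsilon_n^\alpha}$ for limit $\alpha$. Each clause is then a one-line substitution into a previously proved theorem.

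\begin{proof}
(i) Since $P_{\alpha+1}=Q_\alpha^\omega$, Theorem \ref{directsum} (applied with $X=\mathbb R$ and $\mathcal J=Q_\alpha$, using the ``in particular'' clause $c_{0,\mathcal J^\omega}$ is isometric to $\ell_\infty(c_{0,\mathcal J})$) gives that $c_{0,P_{\alpha+1}}$ is isometric to $\ell_\infty(c_{0,Q_\alpha})$.

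(ii) By definition $Q_{\alpha+1}=(P_{\alpha+1})^\perp=(Q_\alpha^\omega)^\perp=(P_\alpha^{\perp})^{\omega\perp}$. Apply Theorem \ref{omegaperp} with $\mathcal J=P_\alpha^\perp=Q_\alpha$; its ``in particular'' clause says $c_{0,\mathcal J^{\omega\perp}}$ is isometric to $c_0(c_{0,\mathcal J^\perp})$. Here $\mathcal J^\perp=Q_\alpha^\perp=P_\alpha^{\perp\perp}=P_\alpha$, the last equality because all ideals in $\mathcal B$ are Fr\'echet. Hence $c_{0,Q_{\alpha+1}}$ is isometric to $c_0(c_{0,P_\alpha})$.

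(iii) For limit $\alpha$ we have $P_\alpha=\bigoplus_n Q_{\upsilon_n^\alpha}$ (a direct sum over the partition implicit in the definition of $\bigoplus$). Theorem \ref{directsum}, with $\mathcal I_n=Q_{\upsilon_n^\alpha}$, yields that $c_{0,P_\alpha}$ is isometric to $\ell_\infty((c_{0,Q_{\upsilon_n^\alpha}})_n)$.

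(iv) For limit $\alpha$, $Q_\alpha=P_\alpha^\perp=\bigl(\bigoplus_n Q_{\upsilon_n^\alpha}\bigr)^\perp=\bigl(\bigoplus_n P_{\upsilon_n^\alpha}^\perp\bigr)^\perp$. Theorem \ref{omegaperp}, with $\mathcal I_n=P_{\upsilon_n^\alpha}^\perp$ and hence $\mathcal I_n^\perp=P_{\upsilon_n^\alpha}^{\perp\perp}=P_{\upsilon_n^\alpha}$ (again using that $\mathcal B$ consists of Fr\'echet ideals), shows that $c_{0,Q_\alpha}$ is isometric to $c_0((c_{0,P_{\upsilon_n^\alpha}})_n)$.
\end{proof}

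The only genuinely non-formal ingredient is the repeated use of the Fr\'echet property $P_\beta^{\perp\perp}=P_\beta$ for ideals in $\mathcal B$, which is quoted in the excerpt right after the recursion; without it, clauses (ii) and (iv) would only give $c_0(c_{0,Q_\alpha^\perp})$ and $c_0((c_{0,P_{\upsilon_n^\alpha}^{\perp\perp}})_n)$ rather than the stated spaces. I expect no real obstacle: the main point to be careful about is matching the indexing and partition conventions of $\bigoplus$ and $(\cdot)^\omega$ in the definitions of $P_\alpha,Q_\alpha$ with the hypotheses of Theorems \ref{directsum} and \ref{omegaperp}, and checking that the ``in particular'' specializations of those theorems are exactly the shapes $\ell_\infty(c_{0,\mathcal J})$, $c_0(c_{0,\mathcal J^\perp})$ needed here.
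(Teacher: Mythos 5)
Your proposal is correct and follows essentially the same route as the paper: each clause is obtained by substituting the recursive definitions of $P_\alpha$ and $Q_\alpha$ into Theorems \ref{directsum} and \ref{omegaperp}, using $(Q_\alpha)^\perp=P_\alpha$ (the Fr\'echet property) exactly where the paper does. You spell out the indexing and the Fr\'echet step a bit more explicitly than the paper, but there is no substantive difference.
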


\begin{proof}  (i) Since  $P_{\alpha+1}=(Q_{\alpha})^\omega$, the claim follows from Theorem \ref{directsum}. (ii) As $Q_{\alpha+1}=(Q_{\alpha})^{\omega\perp}$ and $(Q_\alpha)^\perp=P_\alpha$, the claim follows from Theorem \ref{omegaperp}.
(iii) Let $\alpha$ be a limit ordinal. The result follows from Theorem \ref{directsum} as  $P_{\alpha}=\oplus_{n}(P_{\upsilon_{n}^{\alpha}})^\perp=\oplus_{n}Q_{\upsilon_{n}^{\alpha}}$. (iv) Since $Q_\alpha=(P_\alpha)^\perp=(\oplus_n(P_{\upsilon_{n}^{\alpha}})^\perp)^\perp$, the result follows from Theorem \ref{omegaperp}.
\end{proof}

From Proposition \ref{restriction} and Theorem \ref{ByWO} we immediately get the following.  Observe that $c_{0,{\sf WO}}$ is not isomorphic to $\ell_\infty$ by Theorem \ref{Teorema de leonetti}.

\begin{theorem}
 For every  $\alpha<\omega_1$,  $c_{0,P_{\alpha}}$ and $c_{0,Q_{\alpha}}$ are isometric to a closed subspace of $c_{0,{\sf WO}}$. 
\end{theorem}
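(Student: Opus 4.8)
The plan is to combine the structural Theorem \ref{ByWO} with the embedding Proposition \ref{restriction}, using Theorem \ref{c_0,I caracteriza ideales} (equivalently Proposition \ref{Th}) to pass from an isomorphism of ideals to an isometry of the associated $c_{0,\cdot}$ spaces. Indeed, the excerpt itself announces the result as an immediate consequence, so the work is essentially bookkeeping.

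First I would note that both $P_\alpha$ and $Q_\alpha$ lie in the family $\mathcal{B}$. Hence, by Theorem \ref{ByWO}, there are subsets $A,B\subseteq\Q$ such that $P_\alpha$ is isomorphic, as an ideal, to ${\sf WO}\restriction A$, and $Q_\alpha$ is isomorphic to ${\sf WO}\restriction B$. Since $\Q$ is countable (fix once and for all a bijection $\Q\cong\N$), these restrictions are ideals on countable sets and the spaces $c_{0,{\sf WO}\restriction A}$ and $c_{0,{\sf WO}\restriction B}$ are well defined in the usual way.

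Next, applying Proposition \ref{Th} (or Theorem \ref{c_0,I caracteriza ideales}) to the two ideal isomorphisms, $c_{0,P_\alpha}$ is isometric to $c_{0,{\sf WO}\restriction A}$ and $c_{0,Q_\alpha}$ is isometric to $c_{0,{\sf WO}\restriction B}$, in fact via Banach lattice isometries. Finally, Proposition \ref{restriction}, applied with the ideal ${\sf WO}$ and the subset $A$ (respectively $B$), says that $c_{0,{\sf WO}\restriction A}$ is Banach lattice isometric to a closed ideal of $c_{0,{\sf WO}}$, and in particular to a closed subspace of $c_{0,{\sf WO}}$. Composing these isometries shows that $c_{0,P_\alpha}$ and $c_{0,Q_\alpha}$ are each isometric to a closed subspace of $c_{0,{\sf WO}}$, which is what we wanted.

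There is no genuine obstacle in this argument; the only points requiring a moment's care are that Proposition \ref{restriction} is stated for an arbitrary subset of $\N$ (here a subset of $\Q$, which is harmless after fixing the bijection $\Q\cong\N$), and that the implication ``isomorphic ideals $\Rightarrow$ isometric $c_{0,\cdot}$ spaces'' is exactly the content of Proposition \ref{Th}. One may also remark, as the excerpt does, that $c_{0,{\sf WO}}$ is itself not isomorphic to $\ell_\infty$ by Theorem \ref{Teorema de leonetti}, since ${\sf WO}$ is meager; this is not needed for the statement but explains why the embedding is informative.
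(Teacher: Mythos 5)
Your proposal is correct and follows exactly the route the paper intends: Theorem \ref{ByWO} to realize $P_\alpha$ and $Q_\alpha$ as restrictions ${\sf WO}\restriction A$, Proposition \ref{Th} (or Theorem \ref{c_0,I caracteriza ideales}) to convert ideal isomorphisms into isometries, and Proposition \ref{restriction} to embed $c_{0,{\sf WO}\restriction A}$ as a closed ideal of $c_{0,{\sf WO}}$. The paper gives no further detail (it declares the result immediate from these two ingredients), so your write-up simply makes the same argument explicit.
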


Some particular instances of Theorem \ref{the banach spaces c_0,a} are  the following:

\begin{itemize}
\item[] $c_{0,\fin}=c_0$.

\item[] $c_{0,\fin^\perp}=\ell_\infty$.

\item[] $c_{0,\fin^{\perp\omega}}$ is isometric to $\ell_\infty (\ell_\infty)=\ell_\infty$.

\item[] $c_{0, \fin^{\omega}}$ is isometric to $\ell_\infty (c_0)$.
    
\item[] $c_{0, \fin^{\omega\perp}}$ is isometric to $c_0(\ell_\infty )$.

 \item[] $c_{0, \fin^{\omega\perp\omega}}$ is isometric to $\ell_\infty (c_0(\ell_\infty))$. 

\item[]  $c_{0, \fin^{\omega\perp\omega\perp}}$ is isometric to $c_0(\ell_\infty (c_0))$.

\end{itemize}

\medskip

Cembranos and Mendoza \cite{cembranos-mendoza2010} showed that 
$\ell_\infty(c_0)$  and  $c_0(\ell_\infty)$ are not isomorphic.  
Since $\fin^{\omega}$ and $\fin^{\omega\perp}$ are not isomorphic \footnote{A quick way to see this is  noticing that $\fin^{\omega\perp}$ is $F_\sigma$ and $\fin^{\omega}$ is not.}, from Theorem \ref{c_0,I caracteriza ideales}, we obtain the following  weak version of their result.

\begin{theorem}
\cite{cembranos-mendoza2010}
$\ell_\infty(c_0)$ is not isometric to $c_0(\ell_\infty)$.     
\end{theorem}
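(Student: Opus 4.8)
The plan is to reduce the statement to the non-isomorphism of two concrete ideals on $\N$, and then to separate those ideals by an invariant. First I would invoke the two isometric identifications already recorded as consequences of Theorem \ref{the banach spaces c_0,a}: $\ell_\infty(c_0)$ is isometric to $c_{0,\fin^\omega}$ and $c_0(\ell_\infty)$ is isometric to $c_{0,\fin^{\omega\perp}}$. By Theorem \ref{c_0,I caracteriza ideales}, $c_{0,\fin^\omega}$ and $c_{0,\fin^{\omega\perp}}$ are isometric if and only if $\fin^\omega$ and $\fin^{\omega\perp}$ are isomorphic. Hence it suffices to prove $\fin^\omega\not\approx\fin^{\omega\perp}$.

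To do this I would fix the partition $\{K_n:\,n\in\N\}$ of $\N$ used to form $\fin^\omega=\bigoplus_n\fin$ (so each $K_n$ is infinite), which gives the descriptions $A\in\fin^\omega\iff A\cap K_n$ is finite for every $n$, and --- using the formula for the orthogonal of a direct sum from the proof of Theorem \ref{omegaperp} together with $\fin^\perp=\mathcal P(K_n)$ --- $A\in\fin^{\omega\perp}\iff A\subseteq K_0\cup\cdots\cup K_N$ for some $N$. The key point is that $\fin^{\omega\perp}$ is \emph{countably generated}, namely by the increasing chain $\{K_0\cup\cdots\cup K_N:\,N\in\N\}$, whereas $\fin^\omega$ is \emph{not} countably generated. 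For the latter, given any countable $\{A_k:\,k\in\N\}\subseteq\fin^\omega$, I would diagonalize: set $c_n=\bigcup_{i\le n}(A_i\cap K_n)$, a finite subset of $K_n$; choose $p_n\in K_n\setminus c_n$; and let $B=\{p_n:\,n\in\N\}$. Then $B\cap K_n=\{p_n\}$ is finite, so $B\in\fin^\omega$, but $B$ is not contained in any finite union $A_{k_1}\cup\cdots\cup A_{k_j}$: taking $n>\max_i k_i$ would force $p_n\in\bigcup_i(A_{k_i}\cap K_n)\subseteq c_n$, contradicting the choice of $p_n$. So no countable family generates $\fin^\omega$.

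Finally I would observe that ``being countably generated'' is an isomorphism invariant of ideals: if a bijection $f:\N\to\N$ witnessed $\fin^\omega\approx\fin^{\omega\perp}$ and $\{A_k\}$ generated $\fin^{\omega\perp}$, then $\{f^{-1}(A_k)\}$ would generate $\fin^\omega$, because $A\subseteq A_{k_1}\cup\cdots\cup A_{k_j}$ is equivalent to $f^{-1}(A)\subseteq f^{-1}(A_{k_1})\cup\cdots\cup f^{-1}(A_{k_j})$. Therefore $\fin^\omega\not\approx\fin^{\omega\perp}$, and $\ell_\infty(c_0)=c_{0,\fin^\omega}$ is not isometric to $c_0(\ell_\infty)=c_{0,\fin^{\omega\perp}}$.

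There is no serious obstacle here; the only things demanding care are getting the two combinatorial descriptions of the ideals right and checking that the invariant transfers across an ideal isomorphism. If one prefers, the separation can instead be carried out by Borel complexity, as in the footnote: $\fin^{\omega\perp}=\bigcup_N\{A\subseteq\N:\,A\cap K_n=\emptyset\text{ for all }n>N\}$ is $F_\sigma$ in $2^\N$, while $\fin^\omega$ (the classical ideal $\{\emptyset\}\times\fin$) is a standard example of an ideal that is not $F_\sigma$; since a bijection of $\N$ induces a homeomorphism of $2^\N$, isomorphic ideals lie in the same Borel class. Either argument completes the proof.
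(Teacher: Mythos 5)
Your proof is correct and follows the same route as the paper: identify $\ell_\infty(c_0)$ and $c_0(\ell_\infty)$ with $c_{0,\fin^{\omega}}$ and $c_{0,\fin^{\omega\perp}}$ (Theorems \ref{directsum} and \ref{omegaperp}) and then invoke Theorem \ref{c_0,I caracteriza ideales} to reduce everything to the non-isomorphism of the two ideals. The only difference is in that last step: the paper dismisses it in a footnote via Borel complexity ($\fin^{\omega\perp}$ is $F_\sigma$ while $\fin^{\omega}$ is not), whereas you give a self-contained diagonalization showing that $\fin^{\omega\perp}$ is countably generated and $\fin^{\omega}$ is not, together with the (correct) observation that countable generation is an isomorphism invariant --- an equally valid and more elementary way to separate the ideals.
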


Concerning to the Banach space $c_{0,{\sf Fin\times Fin}}$, Theorem \ref{c_0,I for Fubini products} implies that $c_0(\ell_\infty/c_0)$ is isomorphic to $c_{0,\sf{FIN}\times\sf{FIN}}/c_{0,\sf{FIN}^\omega}$. On the other hand, the following result gives a quotient representation of $c_{0,{\sf Fin\times FIN}}$.

\begin{theorem}
$c_{0,\fin\times\fin}$ is isomorphic to $\ell_\infty(c_0)\times c_0(\ell_\infty)/K$, where $K$ is isomorphic to $c_0$.
\end{theorem}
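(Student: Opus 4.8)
The plan is to decompose $\fin\times\fin$ as a join $\ideal\sqcup\idealj$ of two simpler ideals on $\N\times\N$, invoke Theorem \ref{union of ideals} to realize $c_{0,\fin\times\fin}$ as an internal sum of two closed subspaces of $\ell_\infty=\ell_\infty(\N\times\N)$, and then use the open mapping theorem to present an internal sum of closed subspaces as a quotient of their external direct sum.

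First I would regard $\fin^\omega$ and $\fin^{\omega\perp}$ as ideals on $\N\times\N$ relative to the partition into columns $K_n=\{n\}\times\N$. Then $\fin^\omega=\{A\subseteq\N\times\N:(\forall n)\ A\cap K_n\text{ is finite}\}$, and a short computation gives $\fin^{\omega\perp}=\{A\subseteq\N\times\N:A\cap K_n=\emptyset\text{ for all but finitely many }n\}$. The key combinatorial claim is $\fin\times\fin=\fin^\omega\sqcup\fin^{\omega\perp}$: one inclusion is immediate, since the union of a set with all columns finite and a set with finitely many nonempty columns has only finitely many infinite columns; for the other, given $C\in\fin\times\fin$ put $S=\{n:C\cap K_n\text{ is infinite}\}$, a finite set, and write $C=(C\cap(S\times\N))\cup(C\setminus(S\times\N))$, where the first set lies in $\fin^{\omega\perp}$ and the second in $\fin^\omega$. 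Hence, by Theorem \ref{union of ideals}, $c_{0,\fin\times\fin}=c_{0,\fin^\omega}+c_{0,\fin^{\omega\perp}}$ as subspaces of $\ell_\infty$.

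Next comes the abstract step. If $Z_1,Z_2$ are closed subspaces of a Banach space with $Z:=Z_1+Z_2$ closed, the addition map $q\colon Z_1\oplus_\infty Z_2\to Z$, $q(y_1,y_2)=y_1+y_2$, is a bounded linear surjection between Banach spaces, so by the open mapping theorem it induces an isomorphism $(Z_1\oplus_\infty Z_2)/\ker q\cong Z$, with $\ker q=\{(w,-w):w\in Z_1\cap Z_2\}$ isometric to $Z_1\cap Z_2$. I apply this with $Z=c_{0,\fin\times\fin}$ (closed by Proposition \ref{properties of A-convergence}(4)), $Z_1=c_{0,\fin^\omega}$ and $Z_2=c_{0,\fin^{\omega\perp}}$. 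Since $c_{0,\ideal}\cap c_{0,\idealj}=c_{0,\ideal\cap\idealj}$ (immediate from the definition) and $\fin^\omega\cap\fin^{\omega\perp}$ consists exactly of the finite subsets of $\N\times\N$, we get $Z_1\cap Z_2=c_{0,\fin}=c_0$.

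Finally I would transport everything along the isometric isomorphisms $\Phi_1\colon c_{0,\fin^\omega}\to\ell_\infty(c_0)$ and $\Phi_2\colon c_{0,\fin^{\omega\perp}}\to c_0(\ell_\infty)$ provided by the relevant instances of Theorems \ref{directsum} and \ref{omegaperp}. The map $\Phi_1\oplus\Phi_2$ is an isometric isomorphism of $Z_1\oplus_\infty Z_2$ onto $\ell_\infty(c_0)\oplus_\infty c_0(\ell_\infty)$, which is isomorphic to $\ell_\infty(c_0)\times c_0(\ell_\infty)$, and it carries $\ker q$ onto $K:=\{(\Phi_1 w,-\Phi_2 w):w\in c_0\}$; since $w\mapsto\Phi_1 w$ is already an isometry of $c_0$, $K$ is (isometrically) isomorphic to $c_0$. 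Therefore $c_{0,\fin\times\fin}\cong(\ell_\infty(c_0)\times c_0(\ell_\infty))/K$ with $K\cong c_0$, as claimed. I expect the main obstacle to be the verification of the identity $\fin\times\fin=\fin^\omega\sqcup\fin^{\omega\perp}$ and the computation of the intersection ideal; the remainder is the standard quotient description of an internal sum of closed subspaces.
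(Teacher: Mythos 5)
Your proposal is correct and follows essentially the same route as the paper: decompose $\fin\times\fin$ as $\fin^{\omega}\sqcup\fin^{\omega\perp}$, apply Theorem \ref{union of ideals} to write $c_{0,\fin\times\fin}=c_{0,\fin^{\omega}}+c_{0,\fin^{\omega\perp}}$, present this internal sum as a quotient of the external sum via the addition map, identify the kernel with $c_{0,\fin^{\omega}\cap\fin^{\omega\perp}}=c_0$, and transport everything through the isometries of Theorems \ref{directsum} and \ref{omegaperp}. Your write-up is in fact slightly more careful than the paper's, since you verify the identity $\fin\times\fin=\fin^{\omega}\sqcup\fin^{\omega\perp}$ and the closedness needed for the open mapping theorem explicitly.
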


\begin{proof}
Consider the maps $\Psi_1\colon c_{0,{\sf Fin^{\omega}}}\to \ell_\infty((c_0(K_n))_{n\in\mathbb N})$ and $\Psi_2\colon c_{0,{\sf Fin^{\omega\perp}}}\to c_0((\ell_\infty(K_n))_{n\in\mathbb N})$ defined in Theorems \ref{directsum} and \ref{omegaperp}, respectively. Let $S\colon c_{0,{\sf Fin^{\omega}}}\times c_{0,{\sf Fin^{\omega\perp}}}\to c_{0,{\sf Fin^{\omega}}}+c_{0,{\sf Fin^{\omega\perp}}}$ be defined by $S(a,b)=a+b$, for $a\in c_{0,{\sf Fin^{\omega}}}$ and $b\in c_{0,{\sf Fin^{\omega\perp}}}$. If $\Psi_1^{-1}\times\Psi_2^{-1}$ is the natural map from $\ell_\infty((c_0(K_n))_{n\in\mathbb N})\times c_0((\ell_\infty(K_n))_{n\in\mathbb N})$ onto $c_{0,{\sf Fin^{\omega}}}\times c_{0,{\sf Fin^{\omega\perp}}}$, then 
\begin{gather*}
    S\circ(\Psi_1^{-1}\times\Psi_2^{-1})\colon (x,y)\in \ell_\infty((c_0(K_n))_{n\in\mathbb N})\times c_0((\ell_\infty(K_n))_{n\in\mathbb N})\to \Phi_1^{-1}(x)+\Phi_2^{-1}(y)\in c_{0,{\sf Fin^{\omega}}}+c_{0,{\sf Fin^{\omega\perp}}}
\end{gather*}
is linear, continuous and onto. By definition of $\fin\times\fin$ we have 
\[
\fin\times\fin=\{A\cup B:\;A\in \fin^{\omega},\; B\in \fin^{\omega\perp} \}.
\]
So, $c_{0,{\sf Fin\times FIN}}=c_{0,{\sf Fin}^{\omega}}+c_{0,{\sf Fin}^{\omega\perp}}$ by Theorem \ref{union of ideals}. From the fact $\ell_\infty((c_0(K_n))_{n\in\mathbb N})$ and $c_0((\ell_\infty(K_n))_{n\in\mathbb N})$ are isometric to $\ell_\infty(c_0)$ and $c_0(\ell_\infty)$, respectively, we get the first part of the result. Finally, note that  $K=\ker S\circ(\Psi_1^{-1}\times\Psi_2^{-1})$  is isomorphic to $c_0(c_0)$. Indeed, we have 
\begin{align*}
    K&=(\Psi_1^{-1}\times\Psi_2^{-1})^{-1}(\ker S)\\
&=(\Psi_1^{-1}\times\Psi_2^{-1})^{-1}(\{(a,-a)\,\colon\,a\in c_{0,{\sf Fin^{\omega}}}\cap c_{0,{\sf Fin^{\omega\perp}}}\})\\
&=(\Psi_1^{-1}\times\Psi_2^{-1})^{-1}(\{(a,-a)\,\colon\,a\in c_{0,{\sf Fin}}\})\\
&=\{(x,-x)\,\colon\,x\in c_0((c_0(K_n))_{n\in\mathbb N})\}.
\end{align*}
So, the map $x\in c_0((c_0(K_n))_{n\in\mathbb N})\mapsto (x,-x)\in K$ is an isomorphism.   
\end{proof}


\subsection{Grothendieck property}

Recall that a Banach space is \textit{Grothendieck} (or has the {\em  Grothendieck property}) if every operator from $X$ to $c_0$ is weakly compact. It is known that the Grothendieck property pass to quotients and complemented subspaces \cite[Proposition 3.1.4]{gonzalez-kania}. 

Now we are interested in the Grothendieck property for the Banach spaces 
$c_{0,\mathcal I}$. In \cite[Problem 9]{gonzalez-kania} it is asked about the ideals
$\mathcal I$ such that $c_{0,\mathcal I}$ is Grothendieck. Concerning to this question, we have the following results.

\begin{theorem}
    For each ordinal $1\leq\alpha<\omega_1$, $c_{0,P_\alpha}$ and $c_{0,Q_\alpha}$ are not Grothendieck spaces.
\end{theorem}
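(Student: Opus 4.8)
The plan is to reduce everything to two elementary complementation facts together with the isometric identifications collected in Theorem~\ref{the banach spaces c_0,a}, so that no transfinite induction (nor any Cembranos-type theorem about $C(K,X)$) is actually needed. We use two standing facts. First, $c_0$ is not a Grothendieck space, since the identity operator $c_0\to c_0$ is not weakly compact ($c_0$ is not reflexive). Second, the Grothendieck property is inherited by complemented subspaces, as recalled above from \cite[Proposition 3.1.4]{gonzalez-kania} (and it is an isomorphic, hence a fortiori isometric, invariant). Thus in each case it will suffice to exhibit a complemented subspace that is not Grothendieck.

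Next I would record two auxiliary observations. \emph{(a)} If $(X_n)_n$ are nonzero Banach spaces, then $c_0((X_n)_n)$ contains a complemented copy of $c_0$: pick $x_n\in X_n$ with $\|x_n\|=1$ and, by Hahn--Banach, $f_n\in X_n^*$ with $\|f_n\|=1$ and $f_n(x_n)=1$; then $(\lambda_n)_n\mapsto(\lambda_n x_n)_n$ is an isometric embedding of $c_0$ into $c_0((X_n)_n)$, while $(y_n)_n\mapsto(f_n(y_n))_n$ is a norm-one operator from $c_0((X_n)_n)$ onto $c_0$ which is a left inverse of it, so the composition is a bounded projection onto the copy of $c_0$. \emph{(b)} In any $\ell_\infty$-sum $\ell_\infty((X_n)_n)$ each coordinate space $X_m$ is isometric to a $1$-complemented subspace, namely the range of the norm-one projection $(x_n)_n\mapsto(\dots,0,x_m,0,\dots)$.

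I would then dispose of all ideals $Q_\alpha$, $\alpha<\omega_1$. Since $Q_0=\fin$ we have $c_{0,Q_0}=c_0$; by Theorem~\ref{the banach spaces c_0,a}(ii), $c_{0,Q_{\beta+1}}$ is isometric to $c_0(c_{0,P_\beta})$, and by Theorem~\ref{the banach spaces c_0,a}(iv), for limit $\alpha$ the space $c_{0,Q_\alpha}$ is isometric to $c_0((c_{0,P_{\upsilon_n^\alpha}})_n)$. In every case $c_{0,Q_\alpha}$ is isometric either to $c_0$ or to a $c_0$-sum of nonzero Banach spaces (each $c_{0,P_\gamma}$ contains $c_0$, hence is nonzero). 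By observation \emph{(a)} together with the standing facts, $c_{0,Q_\alpha}$ is not Grothendieck for every $\alpha<\omega_1$.

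Finally, for $P_\alpha$ with $1\le\alpha<\omega_1$: by Theorem~\ref{the banach spaces c_0,a}(i), $c_{0,P_{\beta+1}}$ is isometric to $\ell_\infty(c_{0,Q_\beta})=\ell_\infty((c_{0,Q_\beta})_n)$, and by Theorem~\ref{the banach spaces c_0,a}(iii), for limit $\alpha$ the space $c_{0,P_\alpha}$ is isometric to $\ell_\infty((c_{0,Q_{\upsilon_n^\alpha}})_n)$. In either case $c_{0,P_\alpha}$ is isometric to an $\ell_\infty$-sum in which some coordinate space has the form $c_{0,Q_\gamma}$; by observation \emph{(b)} that space sits as a complemented subspace, and it is not Grothendieck by the previous paragraph, so $c_{0,P_\alpha}$ is not Grothendieck. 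I do not anticipate a genuine obstacle: all the substantive work is already contained in Theorem~\ref{the banach spaces c_0,a}, and the only care required is in checking the routine complementation facts \emph{(a)} and \emph{(b)} and in confirming that the case $\alpha=1=0+1$ is covered by the successor argument with $\beta=0$ (where $c_{0,Q_0}=c_0$ supplies the non-Grothendieck complemented subspace).
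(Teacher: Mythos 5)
Your argument is correct, but it takes a somewhat different and more elementary route than the paper's. The paper proceeds by transfinite induction on $\alpha$: the base case treats $c_{0,P_1}\cong\ell_\infty(c_0)$ via a quotient onto $c_0$ and treats $c_{0,Q_1}\cong c_0(\ell_\infty)$ by identifying it with the injective tensor product $c_0\stackrel{\vee}{\otimes}\ell_\infty$ and invoking \cite[Proposition 5.3.8]{gonzalez-kania}; the successor and limit steps then use that the coordinate spaces of the sums in Theorem \ref{the banach spaces c_0,a} are quotients (equivalently, complemented subspaces) of the whole space together with the inductive hypothesis. You dispense with the induction entirely on the $Q$-side by observing that every $c_0$-sum of nonzero Banach spaces contains a complemented copy of $c_0$ (your observation \emph{(a)}, which is a correct and standard norming-functional construction), so that each $c_{0,Q_\alpha}$ fails the Grothendieck property for a uniform, one-line reason; the $P$-side then follows in a single step from the $1$-complementedness of coordinates in $\ell_\infty$-sums. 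What your approach buys is self-containedness: you avoid both the transfinite induction and the citation of the tensor-product result, replacing them with routine complementation facts plus the non-reflexivity of $c_0$. What the paper's inductive scheme buys is a template that would also propagate other quotient-stable properties through the hierarchy. Both arguments rest on the same substantive input, namely the isometric identifications of Theorem \ref{the banach spaces c_0,a} and the stability of the Grothendieck property under complemented subspaces and quotients, so your proof is a valid (and arguably cleaner) alternative.
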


\begin{proof}
We proceed by induction on $\alpha$. If $\alpha=1$, $c_{0,P_1}$ and $c_{0,Q_1}$ are isometric to $\ell_\infty(c_0)$ and $c_0(\ell_\infty)$, respectively. Note that $c_{0,P_1}$ is not  Grothendieck because $c_0$ is a quotient of it. On the other hand, $c_{0,Q_1}$ is not Grothendieck by \cite[Proposition 5.3.8]{gonzalez-kania} since $c_0(\ell_\infty)$ is isometric to $c_0\stackrel{\vee}{\otimes}\ell_\infty$ \cite[Theorem 1.1.11]{diestel and all}.

Now, let $\beta<\omega_1$ and suppose that 
$c_{0,P_\beta}$ and $c_{0,Q_\beta}$ are not Grothendieck.  By Theorem \ref{the banach spaces c_0,a}, $c_{0,P_{\beta+1}}$ and $c_{0,Q_{\beta+1}}$ are isometric to $\ell_\infty(c_{0,Q_\beta})$ and 
 $c_0(c_{0,P_\beta})$, respectively. Since $c_{0,Q_\beta}$ and $c_{0,P_\beta}$ are quotient of $\ell_\infty(c_{0,Q_\beta})$ and 
$c_0(c_{0,P_\beta})$, respectively, we conclude that $c_{0,P_{\beta+1}}$ and $c_{0,Q_{\beta+1}}$ do not have the Grothendieck property.  Now, let $\alpha$ be a limit ordinal and assume that $c_{0,P_\beta}$ and $c_{0,Q_\beta}$ do not have the Grothendieck property for all $\beta<\alpha$. Fix an increasing sequence $(\upsilon_{n}^{\alpha})_n$ of ordinals  such that
$\sup_n(\upsilon_{n}^{\alpha})=\alpha$. By Theorem \ref{the banach spaces c_0,a}, $c_{0,P_{\alpha}}$ and 
    $c_{0,Q_{\alpha}}$ are isometric to $\ell_\infty ((c_{0,Q_{\upsilon_n^\alpha}})_n)$ and $c_0((c_{0,P_{\upsilon_n^\alpha}})_n)$, respectively. Since $c_{0,Q_{\upsilon_n^\alpha}}$ and $c_{0,P_{\upsilon_n^\alpha}}$ do not have  the Grothendieck property for any $n\in\mathbb N$, we infer that $\ell_\infty ((c_{0,Q_{\upsilon_n^\alpha}})_n)$ and $c_0((c_{0,P_{\upsilon_n^\alpha}})_n)$ are not Grothendieck spaces.
\end{proof}

\begin{theorem}
\label{GP}
Let $\mathcal I$ be an  ideal on $\N$. If $\mathcal I$ is a meager, then $c_{0,{\sf Fin}\times\mathcal I}$ is not a Grothendieck space.
\end{theorem}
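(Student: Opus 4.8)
The plan is to produce a quotient of $c_{0,\fin\times\ideal}$ that is isomorphic to $c_0(Y)$ for a suitable nonzero Banach space $Y$, and then to invoke three standard facts: $c_0$ is not a Grothendieck space, $c_0$ is complemented in $c_0(Y)$ whenever $Y\neq 0$, and the Grothendieck property is inherited by quotients and by complemented subspaces \cite[Proposition 3.1.4]{gonzalez-kania}.

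For the first step I would apply Theorem \ref{c_0,I for Fubini products} with $\fin$ playing the role of $\ideal$ there, the given meager ideal $\ideal$ playing the role of $\idealj$, and $X=\mathbb R$. Since $c_{0,\fin}(Z)=c_0(Z)$ for every Banach space $Z$, this yields a Banach-space isomorphism
\[
c_{0,\fin\times\ideal}\big/\,c_{0,\ideal^{\omega}}\ \cong\ c_0\bigl(\ell_\infty/c_{0,\ideal}\bigr).
\]
A meager ideal is proper, hence $\N\notin\ideal$, so the constant sequence ${\bf 1}$ does not belong to $c_{0,\ideal}$; thus $c_{0,\ideal}\neq\ell_\infty$ and $Y:=\ell_\infty/c_{0,\ideal}$ is a nonzero Banach space.

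For the second step I would check that $c_0(Y)$ is not Grothendieck. Fix $y_0\in Y$ with $\|y_0\|=1$ and, by Hahn--Banach, $\phi\in Y^{*}$ with $\|\phi\|=1$ and $\phi(y_0)=1$; then $P\colon c_0(Y)\to c_0(Y)$ defined by $P\bigl((y_n)_n\bigr)=\bigl(\phi(y_n)\,y_0\bigr)_n$ is a norm-one projection onto the subspace $\{(\lambda_n y_0)_n:(\lambda_n)_n\in c_0\}$, which is isometrically a copy of $c_0$. Hence $c_0$ is complemented in $c_0(Y)$, and since $c_0$ itself is not Grothendieck, neither is $c_0(Y)$. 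Finally, $c_0(Y)$ is, up to isomorphism, a quotient of $c_{0,\fin\times\ideal}$, so by the quotient-stability of the Grothendieck property, $c_{0,\fin\times\ideal}$ is not Grothendieck.

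I do not foresee a genuine obstacle: the whole substance sits in the already-established Theorem \ref{c_0,I for Fubini products}, and everything afterwards is soft Banach-space bookkeeping. The two points that need a moment's care are that the displayed identification is only an isomorphism — harmless, as being Grothendieck is an isomorphic invariant — and that one must ensure $Y\neq 0$, which is exactly what properness (a fortiori, meagerness) of $\ideal$ guarantees; indeed the argument shows that $c_{0,\fin\times\ideal}$ fails to be Grothendieck for \emph{every} proper ideal $\ideal$.
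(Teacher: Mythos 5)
Your proof is correct. The first step coincides with the paper's: both apply Theorem \ref{c_0,I for Fubini products} with $\fin$ as the first factor to exhibit $c_0(\ell_\infty/c_{0,\mathcal I})$ as (isomorphic to) a quotient of $c_{0,\fin\times\mathcal I}$, and both then use that the Grothendieck property passes to quotients. Where you diverge is in how you rule out the Grothendieck property for $c_0(Y)$ with $Y=\ell_\infty/c_{0,\mathcal I}$. The paper argues by contradiction: if $c_0(Y)$ were Grothendieck, then by \cite[Proposition 5.3.8]{gonzalez-kania} (applied via $c_0(Y)\cong c_0\stackrel{\vee}{\otimes}Y$) the space $Y$ would be finite-dimensional, hence $c_{0,\mathcal I}$ would be complemented in $\ell_\infty$, contradicting Leonetti's theorem (Theorem \ref{Teorema de leonetti}) --- and this is the only place where meagerness is used. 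You instead observe directly that $c_0(Y)$ contains a norm-one complemented copy of $c_0$ whenever $Y\neq 0$ (via the projection $(y_n)\mapsto(\phi(y_n)y_0)$), so it cannot be Grothendieck; this requires only that $\mathcal I$ be proper. Your route is more elementary --- it bypasses both the tensor-product proposition and Leonetti's non-complementation theorem --- and it proves a strictly stronger statement: $c_{0,\fin\times\mathcal I}$ fails the Grothendieck property for \emph{every} proper ideal $\mathcal I$, including maximal ideals, for which $c_{0,\mathcal I}$ itself \emph{is} Grothendieck by Proposition \ref{complemented c0,I}. The only thing the paper's argument buys in exchange is that it isolates exactly where a hypothesis on $\mathcal I$ would enter (through complementation of $c_{0,\mathcal I}$ in $\ell_\infty$); your argument shows that hypothesis is in fact unnecessary.
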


\begin{proof}
Suppose that $c_{0,{\sf Fin}\times\mathcal I}$ is Grothendieck. By  Theorem \ref{c_0,I for Fubini products}, $c_0(\ell_\infty/c_{0,\mathcal I})$ is Grothendieck. By \cite[Proposition 5.3.8]{gonzalez-kania}, $\ell_\infty/c_{0,\mathcal I}$ must by finite-dimensional. Consequently,  $c_{0,\mathcal I}$ is complemented in $\ell_\infty$. By Theorem \ref{Teorema de leonetti}, $\mathcal I$ is a non-meager ideal.
\end{proof}

It is known that $c_{0,\ideal}$ is complemented if $\ideal$ is a maximal ideal (\cite{Leonetti2018}). A more general fact is the following result for which we include a proof for the sake of completeness. 

\begin{proposition}\label{complemented c0,I}\cite[p. 2]{kania}
    If $\mathcal I$ is an intersection of finitely many maximal ideals on $\mathbb N$, then $c_{0,\mathcal I}$ is complemented in $\ell_\infty$ and, in particular, it is Grothendieck.
\end{proposition}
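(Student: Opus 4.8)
The plan is to exhibit $c_{0,\mathcal I}$ as a closed subspace of $\ell_\infty$ of \emph{finite codimension}; complementation is then automatic, and the Grothendieck property follows because $\ell_\infty$ is a Grothendieck space and this property is inherited by complemented subspaces (as recalled above).

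First I would unwind the hypothesis. Every maximal ideal on $\N$ (which, by our standing assumption, contains $\fin$) is the dual ideal $\mathcal I_p=\{A\subseteq\N:A\notin p\}$ of a free ultrafilter $p$, so $\mathcal I=\mathcal I_{p_1}\cap\cdots\cap\mathcal I_{p_k}$ for some free ultrafilters $p_1,\dots,p_k$. Next I would identify $c_{0,\mathcal I}$ concretely: working directly from the definition of $\mathcal I$-convergence one checks that $c_{0,\mathcal I}=\bigcap_{i=1}^k c_{0,\mathcal I_{p_i}}$, and for a single ultrafilter $p$ we have $\mathbf x=(x_n)\in c_{0,\mathcal I_p}$ if and only if $p\text{-}\lim x_n=0$, since $A(\varepsilon,\mathbf x)\notin p$ is equivalent to $\{n:|x_n|<\varepsilon\}\in p$. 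Hence $c_{0,\mathcal I}=\bigcap_{i=1}^k\ker\varphi_i$, where $\varphi_i\colon\ell_\infty\to\mathbb R$ is given by $\varphi_i(\mathbf x)=p_i\text{-}\lim x_n$. Each $\varphi_i$ is linear (linearity of $p$-limits, cf.\ Proposition \ref{properties of A-convergence}) and bounded, since $|\varphi_i(\mathbf x)|\le\|\mathbf x\|_\infty$. Therefore $c_{0,\mathcal I}$ is closed and has codimension at most $k$ in $\ell_\infty$.

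Then I would invoke the standard fact that a closed subspace $Y$ of finite codimension $n$ in a Banach space $X$ is complemented: lift a basis of the finite-dimensional space $X/Y$ to vectors $e_1,\dots,e_n\in X$, observe that the associated coordinate functionals $c_i$ (the coordinates of the canonical image of $x$ in $X/Y$) are continuous on $X$, and conclude that $P=\mathrm{Id}-\sum_{i=1}^n c_i(\cdot)e_i$ is a bounded projection of $X$ onto $Y$. Applied to $X=\ell_\infty$ and $Y=c_{0,\mathcal I}$, this yields that $c_{0,\mathcal I}$ is complemented in $\ell_\infty$. Since $\ell_\infty$ is a Grothendieck space and the Grothendieck property passes to complemented subspaces, $c_{0,\mathcal I}$ is Grothendieck, completing the proof.

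There is no real obstacle in this argument; the only points needing a little care are the bookkeeping in the second step — correctly translating membership in each $\mathcal I_{p_i}$ into the vanishing of the corresponding ultrafilter limit, and verifying that the $\varphi_i$ are linear and bounded — together with recalling the elementary complementation argument for finite-codimensional subspaces. One can also phrase the whole proof without explicitly mentioning ultrafilters, by observing instead that the quotient $\ell_\infty/c_{0,\mathcal I}$ is at most $k$-dimensional (the $\varphi_i$ descend to functionals separating its points) and then quoting the same complementation fact.
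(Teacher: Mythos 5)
Your proposal is correct and takes essentially the same approach as the paper: the paper simply bundles your functionals $\varphi_i$ into a single bounded linear map $\Phi(\mathbf{x})=(\mathcal J_1^*\text{-}\lim x_n,\dots,\mathcal J_n^*\text{-}\lim x_n)$ whose kernel is $c_{0,\mathcal I}$, and concludes that $\ell_\infty/c_{0,\mathcal I}$ is finite dimensional, hence that $c_{0,\mathcal I}$ is complemented.
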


\begin{proof}
Let $\mathcal J_1,\ldots,\mathcal J_n$ be maximal ideals on $\mathbb N$ such that $\mathcal I=\bigcap_{k=1}^n\mathcal J_k$. Consider the map $\Phi\colon\ell_\infty\to\mathbb R^n$ defined by
\begin{gather*}
     \Phi({\bf x})=(\mathcal J_1^*-\lim x_n,\ldots,\mathcal J_n^*-\lim x_n), \quad {\bf x}=(x_n).
\end{gather*}
It is not difficult to check that $\Phi$ is an onto bounded linear operator. Note that $\ker\Phi=c_{0,\mathcal I}$. So,
$\ell_\infty/c_{0,\mathcal I}$ is finite dimensional. Hence, $c_{0,\mathcal I}$ is complemented in $\ell_\infty$.
\end{proof}


\begin{theorem}
If $\ideal$ is an intersection of finitely many maximal ideals on $\mathbb N$, then $c_{0,\ideal^\omega}$ is complemented in $\ell_\infty$
and, in particular,  is Grothendieck.
\end{theorem}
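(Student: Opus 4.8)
The plan is to reduce the statement to the already-established case of a single (finite intersection of) maximal ideals, namely Proposition~\ref{complemented c0,I}, by combining it with the $\ell_\infty$-sum representation from Theorem~\ref{directsum}. First I would recall that $\ideal^\omega = \bigoplus_{n} \ideal_n$ where each $\ideal_n$ is an isomorphic copy of $\ideal$ on a piece $K_n$ of a fixed partition $\{K_n : n\in\N\}$ of $\N$. By Theorem~\ref{directsum}, $c_{0,\ideal^\omega}$ is isometric to $\ell_\infty\big((c_{0,\ideal_n})_{n\in\N}\big)$, and since all the $\ideal_n$ are isomorphic to $\ideal$, Remark~\ref{isomorphisms of l_oo(c_0)} gives that this is in turn isometric to $\ell_\infty(c_{0,\ideal})$.

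Next I would invoke Proposition~\ref{complemented c0,I}: since $\ideal$ is an intersection of finitely many maximal ideals, $c_{0,\ideal}$ is complemented in $\ell_\infty$; say $\ell_\infty = c_{0,\ideal} \oplus W$ as a direct sum of closed subspaces, realized by a bounded projection $P\colon \ell_\infty\to c_{0,\ideal}$. Taking the $\ell_\infty$-sum of countably many copies of this decomposition yields a bounded projection from $\ell_\infty(\ell_\infty)$ onto $\ell_\infty(c_{0,\ideal})$ (the projection acting coordinatewise by $P$ has norm $\|P\|$). But $\ell_\infty(\ell_\infty)$ is canonically isometric to $\ell_\infty$ (a countable $\ell_\infty$-sum of $\ell_\infty$'s is just $\ell_\infty$ over a countable disjoint union of copies of $\N$). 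Hence $\ell_\infty(c_{0,\ideal})$, and therefore $c_{0,\ideal^\omega}$, is complemented in $\ell_\infty$.

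Finally, for the Grothendieck conclusion I would use that $\ell_\infty$ is a Grothendieck space and that the Grothendieck property passes to complemented subspaces \cite[Proposition 3.1.4]{gonzalez-kania}; so $c_{0,\ideal^\omega}$ is Grothendieck. The only point requiring a little care — and the place I would be most careful — is the bookkeeping in the second paragraph: one must check that transporting the projection $P$ through the isometries of Theorem~\ref{directsum} and Remark~\ref{isomorphisms of l_oo(c_0)} really does land inside $\ell_\infty$ as a complemented subspace of $\ell_\infty$ itself (not merely of $\ell_\infty(\ell_\infty)$ abstractly), but this is immediate once one fixes the identification of $\ell_\infty(\ell_\infty)$ with $\ell_\infty$ over the disjoint union $\bigsqcup_n K_n = \N$. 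There is no genuine obstacle; the argument is a routine combination of Theorem~\ref{directsum}, Proposition~\ref{complemented c0,I}, and the stability of complementation and of the Grothendieck property under $\ell_\infty$-sums.
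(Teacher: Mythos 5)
Your proposal is correct and follows essentially the same route as the paper: both reduce to Proposition \ref{complemented c0,I}, use Theorem \ref{directsum} to identify $c_{0,\ideal^\omega}$ with $\ell_\infty(c_{0,\ideal})$, and then complement this inside $\ell_\infty(\ell_\infty)\cong\ell_\infty$ by applying the single-coordinate decomposition coordinatewise. The paper phrases this as $\ell_\infty\sim\ell_\infty(\ell_\infty)\sim\ell_\infty(c_{0,\ideal})\oplus_\infty\ell_\infty(Y)$, which is exactly your coordinatewise projection.
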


\proof From the hypothesis, we know that $c_{0,\mathcal I}$ is complemented in $\ell_\infty$ (Proposition \ref{complemented c0,I}). Let $Y\subset\ell_\infty$ be a subspace such that $\ell_\infty=c_{0,\mathcal I}\oplus Y$. So, 
\begin{gather*}
    \ell_\infty\sim\ell_\infty(\ell_\infty)\sim\ell_\infty(c_{0,\mathcal I})\oplus_\infty\ell_\infty(Y)\sim c_{0,\mathcal I^{\omega}}\oplus_\infty\ell_\infty(Y),
\end{gather*}
by Theorem \ref{directsum}. Whence $c_{0,\ideal^\omega}$ is isomorphic to a complemented subspace of $\ell_\infty$. 
\endproof

\begin{question}
 Is $c_{0,{\sf WO}}$ a Grothendieck space?
\end{question}

\section*{Acknowledgments}

Part of the research of this paper was developed during a postdoctoral stay of the first author supported by Funda\c c\~ao de Apoio \`a Pesquisa do Estado de S\~ao Paulo, FAPESP, Processo 2021/01144-2, and UIS.

\end{document}